\DeclareMathOperator{\gr}{gr}
\DeclareMathOperator{\Der}{Der}
\DeclareMathOperator{\ad}{ad}
\DeclareMathOperator{\SL}{SL}
\DeclareMathOperator{\GL}{GL}
\DeclareMathOperator{\Cdim}{Cdim}
\DeclareMathOperator{\ann}{ann}
\DeclareMathOperator{\Ann}{Ann}
\begin{document}
\theoremstyle{plain}
\newtheorem{MainThm}{Theorem}
\renewcommand{\theMainThm}{\Alph{MainThm}}
\newtheorem{MainCor}{Corollary}
\renewcommand{\theMainCor}{\Alph{MainCor}}
\newtheorem*{trm}{Theorem}
\newtheorem*{lem}{Lemma}
\newtheorem*{prop}{Proposition}
\newtheorem*{defn}{Definition}
\newtheorem*{thm}{Theorem}
\newtheorem*{thmA}{Theorem A}
\newtheorem*{thmB}{Theorem B}
\newtheorem*{thmC}{Theorem C}
\newtheorem*{thmAA}{Theorem A'}
\newtheorem*{thmBB}{Theorem B'}
\newtheorem*{thmCC}{Theorem C'}
\newtheorem*{example}{Example}
\newtheorem*{cor}{Corollary}
\newtheorem*{conj}{Conjecture}
\newtheorem*{hyp}{Hypothesis}
\newtheorem*{thrm}{Theorem}
\newtheorem*{quest}{Question}
\theoremstyle{remark}
\newtheorem*{rem}{Remark}
\newtheorem*{rems}{Remarks}
\newtheorem*{notn}{Notation}
\newcommand{\FFp}{K}
\newcommand{\Fp}{\mathbb{F}_p}
\newcommand{\Fq}{\mathbb{F}_q}
\newcommand{\Zp}{\mathbb{Z}_p}
\newcommand{\Qp}{\mathbb{Q}_p}
\newcommand{\Kr}{\mathcal{K}}
\newcommand{\Rees}[1]{\widetilde{#1}}
\newcommand{\invlim}{\lim\limits_{\longleftarrow}}
\newcommand{\Md}[1]{\mathcal{M}(#1)}
\newcommand{\Pj}[1]{\mathcal{P}(#1)}
\newcommand{\G}[2]{\mathcal{G}_{#1}(#2)}
\newcommand{\fm}{\mathfrak m}
\newcommand{\Hom}{\operatorname{Hom}}
\newcommand{\Frac}{\operatorname{Frac}}
\newcommand{\Kdim}{\operatorname{Kdim}}
\newcommand{\GKdim}{\operatorname{GKdim}}
\newcommand{\Ext}{\operatorname{Ext}}
\newcommand{\injdim}{\operatorname{injdim}}

\title{Reflexive ideals in Iwasawa algebras}
\author{K. Ardakov, F. Wei and J. J. Zhang}

\address{(K. Ardakov) School of Mathematical Sciences, University of Nottingham, University Park,
Nottingham, NG7 2RD, United Kingdom}

\email{Konstantin.Ardakov@nottingham.ac.uk}

\address{(F. Wei) Department of Applied Mathematics,
Beijing Institute of Technology, Beijing, 100081, P. R. China}

\email{daoshuo@bit.edu.cn}

\address{(J. J. Zhang) Department of Mathematics, Box 354350,
University of Washington, Seattle, Washington 98195, USA}

\email{zhang@math.washington.edu}

\begin{abstract}
Let $G$ be a torsionfree compact $p$-adic analytic group. We give sufficient conditions on $p$ and $G$ which ensure that the Iwasawa algebra $\Omega_G$ of $G$ has no non-trivial two-sided reflexive ideals. Consequently, these conditions imply that every nonzero normal element in $\Omega_G$ is a unit. We show that these conditions hold in the case when $G$ is an open subgroup of $\SL_2(\Zp)$ and $p$ is arbitrary. Using a previous result of the first author, 
we show that there are only two prime ideals in $\Omega_G$ when $G$ is a congruence subgroup of $\SL_2(\Zp)$: the zero ideal and the unique maximal ideal. These statements partially answer some
questions asked by the first author and Brown.
\end{abstract}

\subjclass[2000]{16L30, 16P40}

\keywords{Iwasawa algebra, uniform pro-$p$ group, reflexive ideal,
normal element, microlocalisation}

\maketitle
\let\le=\leqslant  \let\leq=\leqslant
\let\ge=\geqslant  \let\geq=\geqslant

\setcounter{section}{-1}
\section{Introduction}
\label{xxsec0}
\subsection{Motivation}
\label{xxsec0.1} The Iwasawa theory for elliptic curves in
arithmetic geometry provides the main motivation for the study of Iwasawa
algebras $\Lambda_G$, for example when $G$ is a certain subgroup of the
$p$-adic analytic group $\GL_2({\mathbb Z}_p)$ \cite[Section
8]{CSS}. Homological and ring-theoretic properties of these Iwasawa
algebras are useful for understanding the structure of the
Pontryagin dual of Selmer groups \cite{OV,V3} and other modules
over the Iwasawa algebras. Several recent papers \cite{A,AB1, AB2,
V1,V2} are devoted to ring-theoretic properties of the Iwasawa
algebras. One central question in this research direction is
whether there are any non-trivial prime ideals in $\Omega_G = \Lambda_G/p\Lambda_G$, when $G$ is an open subgroup of $\SL_2({\mathbb Z}_p)$, see \cite[Question, p.197]{A}. The aim of this paper is to answer this
question and a few other related open questions.

An Iwasawa algebra over any uniform subgroup of
$\SL_2({\mathbb Z}_p)$ is local and extremely noncommutative
since the only nonzero prime ideal is the maximal ideal by one of our main results, 
Theorem C. These algebras
give rise to a class of so-called just infinite-dimensional
algebras. On the other hand, their associated graded rings are
commutative polynomial rings and hence Iwasawa
algebras share many good properties with commutative rings. This
class of algebras is very interesting from the ring-theoretic
point of view and deserves further investigation.

\subsection{Definitions}
\label{xxsec0.2}
Throughout we fix a prime integer $p$. Let $\mathbb{Z}_p$ be
the ring of $p$-adic integers and let ${\mathbb F}_p$ be the
field ${\mathbb Z}/(p)$. We refer to the book \cite{DDMS} for
the definition and  basic properties of a $p$-adic analytic
group and related material.
Let $G$ be a compact $p$-adic analytic group.
The \emph{Iwasawa algebra} of $G$ (or the \emph{completed
group algebra} of $G$ over ${\mathbb Z}_p$) is defined
to be
\[\Lambda_G : =\lim_{\longleftarrow} \mathbb{Z}_p[G/N],\]
where the inverse limit is taken over the open normal subgroups
$N$ of $G$ \cite[p.443]{La}, \cite[p.155]{DDMS}.
A closely related algebra is $\Omega_G := \Lambda_G/p\Lambda_G$, whose alternative definition is
\[\Omega_G : =\lim_{\longleftarrow} {\mathbb F}_p[G/N].\]
For simplicity, the algebra $\Omega_G$ is also called the
{\it Iwasawa algebra} of $G$ (or the {\it completed
group algebra} of $G$ over $\Fp$). We refer to \cite{AB1}
for some basic properties of $\Lambda_G$ and $\Omega_G$
and to the articles \cite{CSS,CFKSV,V1,V2} for general readings
about Iwasawa algebras and their modules.

In this paper, we deal entirely with $\Omega_G$. For a treatment of 
the implications of our results for the Iwasawa algebra $\Lambda_G$, see 
\cite{A2}.

\subsection{Reflexive ideals}
\label{IntroRefl} Let $A$ be any algebra and $M$ be a left
$A$-module. We call $M$ {\it reflexive} if the canonical map
$$M\to \Hom_{A^{\sf op}}(\Hom_{A}(M,A),A)$$
is an isomorphism. A reflexive right $A$-module is defined
similarly. We will call a two-sided ideal $I$ of $A$ \emph{reflexive}
if it is reflexive as a right and as a left $A$-module.

For the rest of the introduction we assume that $G$ is torsionfree,
in which case $\Omega_G$ is an Auslander regular domain.
Here is our first main result.

\begin{thmA}
Let $G$ be a torsionfree compact $p$-adic
analytic group whose $\Qp$-Lie algebra $\mathcal{L}(G)$ is split semisimple over
$\Qp$. Suppose that $p \geq 5$ and that $p \nmid n$ in the case when $\mathfrak{sl}_n(\Qp)$ occurs as a direct 
summand of $\mathcal{L}(G)$. Then $\Omega_G$ has no non-trivial two-sided reflexive
ideals. 
\end{thmA}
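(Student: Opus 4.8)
The plan is to reduce the problem, by standard devissage, to the case where $G$ is a uniform pro-$p$ group, and then to exploit the correspondence between two-sided ideals of $\Omega_G$ and certain graded ideals of the associated graded ring $\gr \Omega_G$, which under our hypotheses is a polynomial ring over $\Fp$ carrying a Poisson bracket coming from the Lie bracket on $\mathcal{L}(G)$. The key point will be that a reflexive ideal $I$ of $\Omega_G$ has height one, so $\gr I$ has height one in the polynomial ring $\gr \Omega_G$, hence (the polynomial ring being a UFD) its lowest-degree part is contained in a principal ideal generated by an irreducible homogeneous polynomial $f$. One then argues that $f$ must be a Poisson-normal element of $\gr \Omega_G$, i.e. the principal ideal $(f)$ is stable under the Poisson bracket $\{\mathcal{L}(G), -\}$. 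The heart of the argument is then a Lie-theoretic computation showing that under the stated hypotheses on $p$ and on the simple summands of $\mathcal{L}(G)$, the symmetric algebra $\Sym(\mathcal{L}(G) \otimes \Fp)$ has no non-constant non-unit Poisson-normal elements; equivalently, the adjoint action of $\mathcal{L}(G)\otimes\Fp$ on any such $f$ is non-trivial unless $f$ is a scalar.

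In more detail, I would proceed as follows. First, using that $\Omega_G$ for torsionfree $G$ is an Auslander regular domain (as stated), recall that two-sided reflexive ideals of such rings are precisely the two-sided ideals that are unmixed of grade one, and that it suffices to treat a nonzero reflexive ideal $I$ and derive $I = \Omega_G$. Second, pass to an open uniform normal subgroup $H$ of $G$: I would show that if $\Omega_G$ had a non-trivial reflexive ideal then so would $\Omega_H$ — this uses that $\Omega_G$ is a crossed product of $\Omega_H$ by the finite $p$-group $G/H$, that reflexivity is detected after restriction to $\Omega_H$ (grade is preserved under the finite extension), and that a $G/H$-stable reflexive ideal of $\Omega_H$ can be produced from one of $\Omega_G$. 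Note $\mathcal{L}(H) = \mathcal{L}(G)$ is still split semisimple, so the hypotheses are inherited. Third, with $H$ uniform, invoke the filtration on $\Omega_H$ whose associated graded ring is $\gr \Omega_H \cong \Fp[x_1,\dots,x_d]$ with $d = \dim G$, equipped with the Poisson bracket induced by the commutator; the Lie algebra governing this bracket is $\mathcal{L}(G) \otimes \Fp$ (the restriction mod $p$ of the $\Zp$-Lie algebra of $H$). For a nonzero two-sided ideal $I$, its symbol ideal $\gr I$ is a nonzero Poisson-stable homogeneous ideal; and if $I$ is reflexive of grade one then $\gr I$ has height one, so it is contained in a unique minimal prime $\mathfrak{p}$, which is principal (the polynomial ring is a UFD), say $\mathfrak{p} = (f)$ with $f$ irreducible homogeneous. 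Poisson-stability of $\gr I$, together with irreducibility of $f$, forces $\{\,\mathcal{L}(G)\otimes\Fp,\ f\,\} \subseteq (f)$, i.e. $f$ is Poisson-normal.

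The final — and hardest — step is the Lie-algebra calculation: under the hypotheses $p \geq 5$, and $p \nmid n$ whenever $\mathfrak{sl}_n$ is a summand, the symmetric algebra of $\bar{\mathfrak g} := \mathcal{L}(G)\otimes\Fp$ admits no non-constant Poisson-normal homogeneous element. I expect this to be the main obstacle, and I would handle it summand by summand: write $\bar{\mathfrak g} = \bigoplus \bar{\mathfrak g}_i$ with each $\bar{\mathfrak g}_i$ a Chevalley Lie algebra of a simple type; a homogeneous Poisson-normal $f$ would have to restrict to a Poisson-normal element in each $\Sym(\bar{\mathfrak g}_i)$. For a simple $\bar{\mathfrak g}_i$ with $p$ good (guaranteed by $p\geq5$ outside type $A$, and by $p\nmid n$ in type $A_{n-1}$ so that $\mathfrak{sl}_n$ is still simple with non-degenerate Killing form), the adjoint action of $\bar{\mathfrak g}_i$ on $\Sym(\bar{\mathfrak g}_i)$ has no invariant non-principal divisor whose defining polynomial is moved into a proper principal ideal: concretely, if $\{\bar{\mathfrak g}_i, f\} \subseteq (f)$ then the cocycle $x \mapsto \{x,f\}/f$ defines a Lie algebra homomorphism $\bar{\mathfrak g}_i \to \Fp[\bar{\mathfrak g}_i]$, which by simplicity and $[\bar{\mathfrak g}_i,\bar{\mathfrak g}_i]=\bar{\mathfrak g}_i$ must land in degree-zero constants, and a degree count on the bracket then forces this constant to be zero, so $f$ is a genuine $\bar{\mathfrak g}_i$-invariant; finally one checks that the invariant ring of $\Sym(\bar{\mathfrak g}_i)$ under the adjoint action (a polynomial ring on the fundamental invariants, by the good-prime Chevalley restriction theorem) contains no element that is a power of an irreducible, except constants — or more simply, that no fundamental invariant generates a height-one prime that is Poisson-stable on the nose. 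Assembling these local conclusions, $f$ must be a unit, contradicting $\gr I$ being a proper ideal; hence $I = \Omega_H$, and tracing back through the devissage, $\Omega_G$ has no non-trivial two-sided reflexive ideal.
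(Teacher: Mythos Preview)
Your devissage to a uniform subgroup and the translation of reflexivity into grade one are fine, and mirror the paper. The trouble begins at the Poisson step. For a uniform pro-$p$ group $H$ the $\Zp$-Lie algebra $L$ is \emph{powerful}, i.e.\ $[L,L]\subseteq pL$; consequently the induced Lie bracket on $L/pL$ is identically zero, and the commutator on $\Omega_H$ drops degree by at least $p^\epsilon-1$, not by $1$. So the ``Poisson bracket coming from $\mathcal{L}(G)\otimes\Fp$'' that you want to use on $\gr\Omega_H\cong\Fp[y_1,\dots,y_d]$ is the zero bracket, and the statement ``$\gr I$ is Poisson-stable'' carries no information whatsoever. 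The genuine constraints imposed by two-sidedness of $I$ on $\gr I$ come instead from \emph{Frobenius-twisted} derivations of the form $D_{p^r u}(y_j)=\sum_i c_{ij}\,y_i^{p^{r+k}}$ (see Theorem~\ref{IwaDers}), and organising these into a usable tool is exactly the content of the paper's ``sources of derivations'' and ``derivation hypothesis'' machinery.

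There is a second, independent gap. Even if one had a non-trivial Poisson bracket, your Lie-theoretic endgame is false as stated: for $\mathfrak{sl}_2$ over $\Fp$ with $p$ odd, the Casimir $h^2+4ef$ is an irreducible homogeneous adjoint invariant, hence certainly Poisson-normal, so the claim that $\Sym(\bar{\mathfrak g}_i)$ admits no non-constant irreducible Poisson-normal element fails already in the smallest case. The paper sidesteps both issues: rather than trying to rule out a single principal graded ideal, it proves a \emph{control theorem} (Theorem~\ref{Main}) showing, via microlocalisation and a reduction to normal elements, that any reflexive two-sided ideal of $KG$ is controlled by $KG^p$. Iterating through the chain $G\supset G^p\supset G^{p^2}\supset\cdots$ forces $I\subseteq\bigcap_k (G^{p^k}-1)KG=0$. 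The Lie-theoretic input you were looking for reappears as the ``derivation hypothesis'' (Definition~\ref{Star}), which concerns the twisted derivations above rather than a Poisson bracket, and whose verification for Chevalley groups is the content of the companion paper \cite{AWZ}.
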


The proof of Theorem A is based on a result from \cite{AWZ}. For
a few small $p$, there are some extra difficulties to be dealt with; hence we exclude these primes from
consideration. We believe that these restrictions on $p$ are not really necessary.

\subsection{Normal elements}
\label{xxsec0.4}
Recall that an element $w$ of a ring $A$ is said to be \emph{normal} if $wA = Aw$. The first author and Brown \cite[Question K]{AB1} asked whether under hypotheses on $G$ similar to the ones in Theorem A any nonzero normal element of $\Omega_G$ must be a unit. Because every nonzero normal element $w \in \Omega_G$ gives rise to a nonzero reflexive two-sided ideal $w\Omega_G$, Theorem A implies 

\begin{thmB}
Under the same hypotheses as in Theorem A, every nonzero normal element of $\Omega_G$ is a unit.
\end{thmB}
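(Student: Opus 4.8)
The plan is to deduce Theorem B directly from Theorem A by passing from a normal element to the principal two-sided ideal it generates, and then invoking the classification of reflexive two-sided ideals. So I would start with a nonzero normal element $w \in \Omega_G$, meaning $w\Omega_G = \Omega_G w$; call this two-sided ideal $I$. The first observation is that $I \neq 0$: since $G$ is torsionfree, $\Omega_G$ is a domain (this is the regularity statement recorded just before Theorem A), so $w$ is a nonzerodivisor and $I \ni w \neq 0$.

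The key step is to check that $I$ is reflexive as a left and as a right $\Omega_G$-module. For the right-module structure, left multiplication by $w$ defines a map $\Omega_G \to w\Omega_G = I$, which is surjective by definition, injective because $w$ is a nonzerodivisor, and right $\Omega_G$-linear because $w(ab) = (wa)b$; hence it is an isomorphism of right $\Omega_G$-modules. Symmetrically, right multiplication by $w$ gives an isomorphism of left $\Omega_G$-modules $\Omega_G \to \Omega_G w = I$. Since $\Omega_G$ is free of rank one over itself on either side it is reflexive as a one-sided module, and reflexivity is preserved under isomorphism; thus $I$ is a reflexive two-sided ideal in the sense of the introduction. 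Now apply Theorem A: under the stated hypotheses on $p$ and $G$, the only reflexive two-sided ideals of $\Omega_G$ are $0$ and $\Omega_G$. As $I \neq 0$, we get $I = \Omega_G$, i.e. $w\Omega_G = \Omega_G = \Omega_G w$. Finally, $w\Omega_G = \Omega_G$ yields $v$ with $wv = 1$, and $\Omega_G w = \Omega_G$ yields $u$ with $uw = 1$; then $u = u(wv) = (uw)v = v$, so $w$ is a two-sided unit.

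I do not expect a genuine obstacle in this argument: all the mathematical content is already packaged into Theorem A, and the passage above is a formal consequence of the fact that a principal ideal generated by a normal nonzerodivisor is isomorphic, on each side, to the ring itself. The only place the torsionfree hypothesis is used in the reduction is to guarantee that $\Omega_G$ has no zero divisors, so that $w \neq 0$ forces $I \neq 0$; everything else is the deduction from Theorem A.
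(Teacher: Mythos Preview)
Your proposal is correct and follows exactly the route the paper takes: it deduces Theorem B from Theorem A by observing that a nonzero normal element $w$ generates a nonzero reflexive two-sided ideal $w\Omega_G$, which must then equal $\Omega_G$. You have simply filled in the easy details (reflexivity of $w\Omega_G$ via the isomorphism with $\Omega_G$, and the passage from $w\Omega_G = \Omega_G = \Omega_G w$ to invertibility) that the paper leaves implicit.
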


Theorem B partially answers the open question \cite[Question K]{AB1}.

\subsection{Iwasawa algebras over subgroups of $\SL_2({\mathbb Z}_p)$}
\label{xxsec0.5} Another open question \cite[Question J]{AB1} is,
under hypotheses similar to those in Theorem A, whether there are
any non-trivial prime ideals in $\Omega_G$? This question is
particularly interesting when $G$ an open subgroup of $\SL_2(\Zp)$.
Using \cite[Theorem A]{A} we can prove
\begin{thmC}
Let $G$ be an open torsionfree subgroup of $\SL_2(\Zp)$.
Then every prime ideal in $\Omega_G$ is either zero
or maximal.
\end{thmC}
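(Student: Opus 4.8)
The plan is to combine Theorem A (applied to $\SL_2$) with the structural input of \cite[Theorem A]{A}, which describes prime ideals of $\Omega_G$ for $G$ an open torsionfree subgroup of $\SL_2(\Zp)$ in terms of an associated ``characteristic variety'' or dimension function; more precisely it should give a list of the possible values of the canonical (or Gelfand--Kirillov) dimension that a nonzero proper prime quotient $\Omega_G/P$ can take, together with control of such $P$ via reflexive ideals. Let $P$ be a prime ideal of $\Omega_G$ which is neither zero nor maximal. Since $\Omega_G$ is an Auslander regular domain of finite global dimension, $P$ has a well-defined grade, and $P$ being nonzero and nonmaximal forces the grade of $P$ to lie strictly between $0$ and the full dimension $d = \dim G$. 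The first step is to invoke \cite[Theorem A]{A} to conclude that such an intermediate prime $P$ must be a \emph{reflexive} ideal of $\Omega_G$: in the Auslander regular setting, a prime whose grade is $1$ is automatically reflexive, and \cite[Theorem A]{A} should rule out all intermediate grades $\geq 2$, leaving only grade-one primes (for $G$ open in $\SL_2(\Zp)$, $d=3$, so the only intermediate grade is $1$, and grade-one prime ideals in an Auslander--Gorenstein domain are reflexive).

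The second step is to check that Theorem A applies to $G$. The $\Qp$-Lie algebra of an open subgroup of $\SL_2(\Zp)$ is $\mathfrak{sl}_2(\Qp)$, which is split semisimple over $\Qp$, and for the ``$p \nmid n$'' condition we only need $p \nmid 2$. So Theorem A applies whenever $p \geq 5$, and then $\Omega_G$ has no non-trivial two-sided reflexive ideals. Combining with the first step, there is no prime ideal $P$ with $0 \subsetneq P \subsetneq \Omega_G$ having intermediate grade, hence every prime is either zero or maximal, at least for $p \geq 5$. The remaining cases $p = 2$ and $p = 3$ must be handled separately; here one cannot cite Theorem A, so instead one appeals directly to \cite[Theorem A]{A}, which is stated for arbitrary $p$: that result should already pin down all prime ideals of $\Omega_G$ when $G$ is open torsionfree in $\SL_2(\Zp)$, and inspection of its statement for small $p$ gives exactly the zero ideal and the maximal ideal. (Alternatively, for $p = 2, 3$ one replaces $G$ by a suitable open normal subgroup $H \trianglelefteq G$ to which the argument applies, then descends: a nonzero prime $P$ of $\Omega_G$ contracts to $\Omega_H$, and since $\Omega_G$ is finitely generated over $\Omega_H$ one controls $\dim \Omega_G/P$ via $\dim \Omega_H/(P \cap \Omega_H)$.)

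The main obstacle is the passage from ``$P$ is an intermediate prime'' to ``$P$ is reflexive'': one needs the precise form of \cite[Theorem A]{A} to exclude grade-$2$ primes (equivalently, primes $P$ with $\dim \Omega_G/P = 1$), and to know that no reflexive prime of intermediate grade survives. In dimension $3$ the arithmetic is forgiving --- the only intermediate grade is $1$, and grade-one primes in an Auslander--Gorenstein ring are automatically reflexive --- so the real content is entirely in citing \cite[Theorem A]{A} correctly and then applying Theorem A of the present paper. The small-prime cases are the only place where a genuinely separate argument is required, and these are dispatched by the unconditional statement of \cite[Theorem A]{A}.
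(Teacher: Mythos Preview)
Your overall strategy matches the paper's: use the canonical dimension $\Cdim(\Omega_G/P) \in \{0,1,2,3\}$ to sort primes, rule out $\Cdim = 1$ via \cite[Theorem A]{A}, and rule out $\Cdim = 2$ (equivalently, grade $1$) via the absence of non-trivial reflexive ideals. The paper phrases the last step slightly differently --- the theorem in \S\ref{ProofThmA} shows directly that no two-sided ideal $I$ has $\Cdim(\Omega_G/I) = d - 1$, rather than passing through ``grade-one prime $\Rightarrow$ reflexive'' --- but the content is the same.

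The genuine gap is your treatment of $p = 2$ and $p = 3$. Theorem A of the present paper requires $p \geq 5$, and neither of your proposed workarounds succeeds. First, \cite[Theorem A]{A} does \emph{not} pin down all prime ideals: it only excludes primes with $\Cdim(\Omega_G/P) = 1$, leaving the $\Cdim = 2$ case entirely open, so ``inspection of its statement for small $p$'' cannot finish the argument. Second, passing to an open normal subgroup $H$ does not help, since $H$ has the same $\Qp$-Lie algebra $\mathfrak{sl}_2(\Qp)$ and the same restriction $p \geq 5$ obstructs applying Theorem A to $\Omega_H$. What the paper actually does is verify the derivation hypothesis \emph{by hand} for congruence subgroups of $\SL_2(\Zp)$ at every prime: for $p \geq 3$ this is the Proposition in \S\ref{CongSubSL2}, and for $p = 2$ it is \S\ref{p=2}, where a two-step descent $G \supset G_1 \supset G^p$ is needed because the pair $(KG, KG^p)$ itself fails the derivation hypothesis when $p = 2$. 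With these direct verifications in hand, the argument of \S\ref{ProofThmA} runs for all $p$ without invoking \cite{AWZ}, which is exactly why the paper can claim that Theorem C is independent of \cite{AWZ}. These explicit small-prime computations are the substantive content your proposal is missing.
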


The proof of Theorem C is independent of \cite{AWZ}.

\subsection{A key step in proof}
\label{xxsec0.7} 
To prove the theorems above we have to overcome several
technical difficulties which seem unrelated to the main 
theorems. The proof is divided into several steps and
we only mention one key step: a control theorem for reflexive
ideals.

\begin{thm}[Theorem \ref{Main}]
Let $(A,A_1)$ be a Frobenius pair satisfying the derivation 
hypothesis, such that $\gr A$ and $\gr A_1$ are UFDs. Let $I$ be a 
reflexive two-sided ideal of $A$. Then $I\cap A_1$ is a 
reflexive two-sided ideal of $A_1$ and $I$ is controlled 
by $A_1$:
\[I = (I\cap A_1)\cdot A.\]
\end{thm}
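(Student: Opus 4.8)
The plan is to work with the filtration-graded machinery that a "Frobenius pair" $(A,A_1)$ provides. I expect the definition of a Frobenius pair to include that $A$ is a free $A_1$-module (on both sides) with a basis adapted to a group-like or Lie-theoretic structure, so that $\gr A$ is a finite (necessarily free) module over $\gr A_1$, and that the "derivation hypothesis" says the extra generators of $\gr A$ over $\gr A_1$ act on $\gr A_1$ by derivations that are inner or at least controlled. The first thing I would do is pass to associated graded rings: since $\gr A$ and $\gr A_1$ are commutative Noetherian UFDs (hence normal domains), divisorial/reflexive ideals there are well understood, and a two-sided ideal $I$ of $A$ is reflexive if and only if $\gr I$ is a reflexive (i.e. divisorial) ideal of $\gr A$, up to the usual subtleties about whether $\gr$ of a reflexive ideal is reflexive. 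So the first key step is: reduce the control statement $I = (I\cap A_1)A$ to the graded statement $\gr I = (\gr I \cap \gr A_1)\cdot \gr A$, using a standard filtered-to-graded lifting argument (if $\gr I$ is controlled then so is $I$, by induction on the filtration degree and the fact that $A = A_1 \cdot A$ as filtered modules).

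**The graded control statement.**

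The second, and main, step is to prove the graded version: if $J := \gr I$ is a $\gr A$-stable (i.e. two-sided, but $\gr A$ is commutative so just an ideal) divisorial ideal of $\gr A$ that is moreover stable under the derivations coming from the derivation hypothesis, then $J$ is generated by $J \cap \gr A_1$. Here I would argue as follows. Since $\gr A$ is a UFD, the divisorial ideal $J$ is a finite intersection of symbolic powers of height-one primes: $J = \bigcap_i \mathfrak{p}_i^{(n_i)}$, equivalently $J = f \cdot \gr A$ for a single element $f$ if $J$ is actually principal — and in a UFD every divisorial ideal that is reflexive as a module... wait, in a UFD every reflexive ideal is principal. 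So $J = f\gr A$ for some $f \in \gr A$. The content of the derivation hypothesis is then that the derivations $\partial$ (the adjoint action of the degree-one part of $\gr A$ not in $\gr A_1$) satisfy $\partial(f)\gr A \subseteq f \gr A$, i.e. $f \mid \partial(f)$, forcing $\partial(\log f) \in \gr A$; a Leibniz/logarithmic-derivative argument on the prime factorization of $f$ then shows each prime factor is itself semi-invariant, and a weight/grading computation (this is where $p \nmid n$ type hypotheses, inherited from the ambient setup, would enter if needed) shows the semi-invariant prime factors must lie in $\gr A_1$. Hence $f \in \gr A_1$ up to a unit, so $J = (J \cap \gr A_1)\gr A$. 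That $I \cap A_1$ is then reflexive in $A_1$ follows because $\gr(I\cap A_1) = \gr I \cap \gr A_1 = f\gr A_1$ is principal, hence reflexive, in the UFD $\gr A_1$, and reflexivity lifts from graded to filtered.

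**The main obstacle.**

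The hard part will be the step showing the single generator $f$ of the divisorial ideal $\gr I$ can be chosen inside $\gr A_1$ — i.e. translating the derivation hypothesis into the statement "every prime factor of $f$ is fixed (up to scalar) by all the relevant derivations, and such primes lie in $\gr A_1$." There are two genuine difficulties hidden here. First, one must be careful that $\gr I$ is genuinely reflexive in $\gr A$ and not just that $I$ is reflexive in $A$ — this requires the Auslander-regularity/grade-compatibility of the filtration, and is the reason the hypothesis "$\gr A, \gr A_1$ are UFDs" (rather than merely normal) is imposed, since in a UFD reflexive = principal makes the divisor bookkeeping trivial. Second, the passage from "$f$ is semi-invariant under each derivation $\partial$" to "$f \in \gr A_1$" is not formal: it uses the specific structure of $\gr A$ as a polynomial ring in which $\gr A_1$ is the subring of invariants (or the kernel of the $\partial$'s), and this is exactly where the analogue of \cite[the cited result]{AWZ} or an explicit calculation with the coordinates of the uniform group is needed. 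I would isolate this as a separate lemma about commutative polynomial rings with a torus/derivation action before assembling the filtered argument, and I expect essentially all the real work of Theorem~\ref{Main} to be concentrated there.
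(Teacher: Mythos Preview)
Your proposal has a genuine gap at the very first reduction step, and this gap is precisely what forces the paper's more elaborate architecture.

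You assume that if $I$ is reflexive in $A$ then $\gr I$ is reflexive (hence principal, since $B=\gr A$ is a UFD) in $B$. This is false in general: passing to associated graded does not preserve reflexivity, and $\gr I$ will typically only satisfy $\gr I \subseteq \overline{\gr I} = XB$ with a nontrivial pseudo-null cokernel $XB/\gr I$. Your acknowledgement of this difficulty (``one must be careful that $\gr I$ is genuinely reflexive'') does not resolve it, and the UFD hypothesis on $B$ does not help --- it only guarantees that the \emph{reflexive closure} of $\gr I$ is principal. Consequently your entire graded argument, which rests on writing $\gr I = fB$ for a single $f$, never gets started.

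The paper's route around this is the point of the whole machinery in \S\S3--5. One first proves a control theorem for \emph{normal elements} $w\in A$ (Theorem~\ref{ContNormal}): here $\gr(wA) = (\gr w)B$ \emph{is} principal, and the derivation hypothesis --- which is not the statement that $\gr I$ is $\mathcal{D}$-stable, but a condition on ``$\mathbf{a}$-closures'' involving sequences of commutators in $A$ itself --- combines with an iterative cleaning argument in $A$ (the $\delta$-function of \S\ref{DeltaFunction} and Proposition~\ref{Star}) to push $w$ into $A_1$ by successive multiplication by units. This argument lives in $A$, not in $B$; it cannot be done purely at the graded level. To pass from normal elements to a general reflexive $I$, the paper microlocalises (\S\ref{MicroLoc}): for each homogeneous $Z$ with $ZX\in\gr I$, the localisation $(\gr I)_Z = XB_Z$ becomes principal, so $Q_Z(I)$ is generated by a normal element of $Q_Z(A)$ (Proposition~\ref{ConstrNormal}), and the normal-element theorem applies in the microlocalised Frobenius pair. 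Varying $Z$ then shows $\gr I/\gr J$ is pseudo-null (Theorem~\ref{IoverJ}), and a final grade argument using the reflexivity of $J=(I\cap A_1)A$ gives $I=J$.

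Finally, the step you flag as the hard one --- ``semi-invariant primes lie in $\gr A_1$'' --- is in fact the easy part: Proposition~\ref{Derivations}(d) shows directly that any $\mathcal{D}$-stable ideal of $B$ is controlled by $B_1$. The real work is getting into a position where that proposition applies, and that is exactly the microlocalisation plus normal-element-cleaning argument you have omitted.
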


All undefined terms will be explained later. As an example we 
may take $(A,A_1)$ to be $(\Omega_G,\Omega_{G^p})$. We will
verify that the derivation hypothesis holds for certain groups $G$, and 
the main theorems then follow from the control theorem and induction. 
This control theorem is in fact the heart of the paper, 
on which all our main results are dependent. The control
theorem should be useful for studying Iwasawa algebras
over other classes of groups, such as the nilpotent or solvable groups.

\subsection{A field extension}
\label{FieldExt}
An algebra $A$ over a field is called {\it just
infinite-dimensional} if it is infinite-dimensional and every
nonzero ideal in $A$ is finite codimensional. This is analogous to
the notion of just infinite groups, also known as the almost simple
groups. Theorem C assures us of a large class of just infinite-dimensional algebras with good homological properties.

Several researchers are interested in just infinite-dimensional
algebras over an algebraically closed field (or an infinite field in
general) \cite{BFP,FS}. For ring-theoretic considerations we
introduce another algebra closely related to $\Omega_G$. Let $\FFp$ be a
field of characteristic $p$ (in particular, $\FFp$ could be the
algebraic closure of $\Fp$). Define
$$\FFp G := \FFp[[G]] : =\lim_{\longleftarrow} \FFp [G/N],$$
where the inverse limit is taken over the open normal subgroups $N$
of $G$. This algebra can be obtained by taking a completion of the
algebra $\Omega_{G}\otimes_{{\mathbb F}_p} \FFp$ with respect to the
filtration $\{\fm^n\otimes_{{\mathbb F}_p} \FFp \;|\; n\geq 0\}$
where $\fm$ is the Jacobson radical of $\Omega_{G}$. Under the same
hypotheses, Theorems A, B and C hold for $\FFp G$.

\section{Preliminaries}
\label{Prelim}\subsection{Fractional ideals}\label{Frac}. Let $R$ be
a noetherian domain. It is well-known that $R$ has a skewfield of
fractions $Q$. Recall that a right $R$-submodule $I$ of $Q$ is said
to be a \emph{fractional right $R$-ideal} if $I$ is nonzero and $I
\subseteq uR$ for some nonzero $u\in Q$. When the ring $R$ is
understood, we simply say that $I$ is a fractional right ideal.
Fractional left $R$-ideals are defined similarly. If $I$ is a
fractional right ideal, then
\[ I^{-1} := \{q\in Q : qI \subseteq R\}\]
is a fractional left ideal and there is a similar definition
of $I^{-1}$ for fractional left ideals $I$. Let $I^\ast := \Hom_R(I,R)$.
This is a left $R$-module and there is a natural isomorphism
$\eta_I : I^{-1} \to I^\ast$ that sends $q\in I^{-1}$ to the
right $R$-module homomorphism induced by left multiplication
by $q$.

\subsection{Reflexive right ideals}
\label{ReflIdeals}
Let $I$ be a fractional right ideal and let
$\overline{I} := (I^{-1})^{-1}$ be the \emph{reflexive closure}
of $I$. This is also a fractional right ideal which contains $I$.
Recall that $I$ is said to be \emph{reflexive} if $I = \overline{I}$,
or equivalently if the canonical map $I \to I^{\ast\ast}$ is
an isomorphism.

\begin{prop}
Let $R \hookrightarrow S$ be a ring extension such that
$R$ is noetherian and $S$ is flat as a left and right $R$-module.
Then there is a natural isomorphism
\[\psi_M^i : S\otimes_R\Ext_R^i(M,R) \stackrel{\cong}{\longrightarrow} \Ext_S^i(M\otimes_RS,S)\]
for all finitely generated right $R$-modules $M$ and all $i\geq 0$.
A similar statement holds for left $R$-modules. If in addition $S$
is a noetherian domain, then
\begin{enumerate}[{(}a{)}]
\item $\overline{J\cdot S} = \overline{J}\cdot S$ for all right ideals $J$ of $R$,
\item if $I$ is a reflexive right ideal of $S$, then $I \cap R$
is a reflexive right ideal of $R$.
\end{enumerate}
\end{prop}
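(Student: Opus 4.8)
The plan is to establish the natural isomorphism $\psi_M^i$ first, since parts (a) and (b) will follow from it together with formal manipulations of the $\Hom$-$\Ext$ machinery. For the isomorphism, I would take a free resolution $P_\bullet \to M$ of $M$ by finitely generated free right $R$-modules; this exists because $R$ is noetherian and $M$ is finitely generated. Since $S$ is flat as a left $R$-module, $P_\bullet \otimes_R S \to M \otimes_R S$ is a free resolution of $M \otimes_R S$ over $S$. Now $\Ext_S^i(M \otimes_R S, S)$ is computed by the complex $\Hom_S(P_\bullet \otimes_R S, S) \cong \Hom_R(P_\bullet, S)$, using the standard adjunction and the fact that each $P_j$ is finitely generated free. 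On the other hand, $S \otimes_R \Ext_R^i(M,R)$ is the $i$-th cohomology of $S \otimes_R \Hom_R(P_\bullet, R)$, and since $S$ is flat as a \emph{right} $R$-module, tensoring commutes with taking cohomology, so this equals the $i$-th cohomology of $S \otimes_R \Hom_R(P_\bullet, R)$. Finally, because each $P_j$ is finitely generated free, the natural map $S \otimes_R \Hom_R(P_j, R) \to \Hom_R(P_j, S)$ is an isomorphism, and it is compatible with the differentials; passing to cohomology gives $\psi_M^i$. Naturality in $M$ is checked by comparing resolutions in the usual way, and the left-module statement is symmetric.

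For part (a): recall from Subsection~\ref{Frac} that for a right ideal $J$ of $R$ the dual $J^\ast = \Hom_R(J,R)$ sits naturally inside the skewfield of fractions via $\eta_J$, and that $\overline{J}$ can be identified with $J^{\ast\ast}$. The key point is that, for $J$ a right ideal of the noetherian domain $R$, applying $\psi^0$ twice with $M = J$ and then with $M$ a suitable module built from $J^\ast$ yields $S \otimes_R J^{\ast\ast} \cong (J \otimes_R S)^{\ast\ast}$ as right $S$-modules; since $S$ is a flat right $R$-module and $R$, $S$ are domains, one also has $J \otimes_R S \cong J \cdot S$ inside the skewfield of fractions of $S$ (here the inclusion $R \hookrightarrow S$ and flatness are what make $J \otimes_R S \to S$ injective with image $J \cdot S$). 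Tracing through these identifications, $S \otimes_R \overline{J} = \overline{J \cdot S}$, and since $S \otimes_R \overline{J} = \overline{J} \cdot S$ by the same torsionfree/flatness reasoning, we get $\overline{J \cdot S} = \overline{J} \cdot S$.

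For part (b): let $I$ be a reflexive right ideal of $S$ and set $J = I \cap R$, a right ideal of $R$. I must show $J = \overline{J}$, i.e. $J^{\ast\ast} = J$. Form $\overline{J} \cdot S$; by part (a) this equals $\overline{J \cdot S}$. Now $J \cdot S \subseteq I$, and taking reflexive closures in $S$ gives $\overline{J \cdot S} \subseteq \overline{I} = I$. Hence $\overline{J} \cdot S \subseteq I$, so $\overline{J} \subseteq \overline{J} \cdot S \cap R \subseteq I \cap R = J$. Since always $J \subseteq \overline{J}$, equality holds, and $J$ is reflexive. (One subtlety to verify is that $\overline{J} \subseteq \overline{J} \cdot S$, which uses $1 \in S$, and that intersecting with $R$ behaves well, which is immediate.) The main obstacle I anticipate is the bookkeeping in part (a): one must be careful that all the identifications of duals with fractional ideals inside the skewfield of fractions are compatible with the flat base change $\psi^0$, and in particular that $J \otimes_R S$ really does map isomorphically onto $J \cdot S \subseteq \Frac(S)$ — this requires knowing that $S \hookrightarrow \Frac(S)$ restricts compatibly on $R \hookrightarrow \Frac(R)$, which should follow from flatness of $S$ over $R$ plus the fact that both are noetherian domains (so the Ore localisations embed). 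Once that compatibility is pinned down, everything else is formal.
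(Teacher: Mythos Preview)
Your argument is correct and follows essentially the same route as the paper. For the $\Ext$ isomorphism you do exactly what the paper does (finitely generated free resolution, flat base change, natural comparison on free modules), and for part (b) your chain $\overline{J} \subseteq \overline{J}\cdot S = \overline{J\cdot S} \subseteq \overline{I} = I$ together with $\overline{J}\subseteq R$ is precisely the paper's argument.

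The only place where the presentations differ in emphasis is part (a). You propose to apply $\psi^0$ twice to obtain $\overline{J}\otimes_R S \cong (J\cdot S)^{\ast\ast}$ directly and then identify both sides inside $\Frac(S)$; this works, but the ``bookkeeping'' you rightly flag as the main obstacle is exactly what the paper addresses head-on. Rather than iterating $\psi^0$ abstractly, the paper proves the single identity $S\cdot I^{-1} = (I\cdot S)^{-1}$ for any fractional right $R$-ideal $I$ (and symmetrically $I^{-1}\cdot S = (S\cdot I)^{-1}$ for left ideals) by writing down the explicit commutative diagram linking $\eta_I$, $\eta_{IS}$, and $\psi_I^0$, so that the compatibility of the $(-)^{-1}$ description with $\psi^0$ is checked once and for all. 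Then $\overline{J\cdot S} = ((JS)^{-1})^{-1} = (S\,J^{-1})^{-1} = (J^{-1})^{-1}S = \overline{J}\cdot S$ falls out in one line. In other words, the paper's extra step is exactly the verification you said you anticipated needing; once you supply it, your proof and the paper's coincide.
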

\begin{proof} Let $M$ be a finitely generated right $R$-module and define
\[\psi_M : S \otimes_R \Hom_R(M,R) \to \Hom_S(M\otimes_RS,S)\]
by the rule $\psi_M(s\otimes f)(m\otimes t) = sf(m)t$ for all
$s\in S, f\in\Hom_R(M,R),m\in M$ and $t\in R$. This gives
a natural transformation
\[\psi : S \otimes_R \Hom_R(-,R) \to \Hom_S(-\otimes_RS,S)\]
such that $\psi_{R^n}$ is an isomorphism for all $n\geq 0$.
Now let $P_\bullet \to M \to 0$ be a projective resolution of
$M$ consisting of finitely generated free $R$-modules. Using
the flatness assumptions on $S$, we see that
\[\begin{array}{rll}
\Ext_S^i(M\otimes_RS,S) = H^i(\Hom_S(P_\bullet\otimes_RS,S))
& \cong& H^i(S\otimes_R\Hom_R(P_\bullet,R)) = \\
= S\otimes_R H^i(\Hom_R(P_\bullet, R)) &=& S\otimes_R\Ext_R^i(M,R),
\end{array}\]
for all $i$, as required.

(a) The division ring of fractions $Q$ of $R$ embeds naturally
into the division ring of fractions of $S$. Let $I$ be a fractional
right $R$-ideal, so that $I \subseteq uR$ for some $u\in Q\backslash 0$.
Then $IS \subseteq uS$, so $IS$ is a fractional right $S$-ideal.
Now $I^{-1}$ is a fractional left $R$-ideal and $I^{-1}I \subseteq R$, so
\[(SI^{-1})(IS) \subseteq SRS \subseteq S\]
and hence $SI^{-1} \subseteq (IS)^{-1}$. Consider the following
diagram of left $S$-modules:
\[
\xymatrix{
S\otimes_R I^{-1} \ar[r]^{\alpha} \ar[d]_{1\otimes \eta_I} & SI^{-1} \ar[r]^{\iota} & (IS)^{-1} \ar[d]^{\eta_{IS}} \\
S\otimes_RI^{\ast} \ar[r]_{\psi_I} & (I\otimes_RS)^\ast & (IS)^{\ast}. \ar[l]^{\beta}\\
}
\]
Here $\iota$ denotes the inclusion of $SI^{-1}$ into $(IS)^{-1}$ and
$\alpha$ and $\beta$ are the obvious maps. A straightforward check
shows that this diagram commutes. By the remarks made in
\S\ref{Frac} the maps $\eta_I$ and $\eta_{IS}$ are isomorphisms.
Since $S$ is a flat left $R$-module, $\alpha$ is an isomorphism and
similarly $\beta$ is an isomorphism. Now $\psi_I$ is an isomorphism
by the first part, so $\iota$ must also be an isomorphism. We deduce
that $SI^{-1} = (IS)^{-1}$ for all fractional right $R$-ideals $I$.
By symmetry, $I^{-1}S = (SI)^{-1}$ for all fractional left
$R$-ideals $I$.

We may assume that $J$ is nonzero, so that $J$ is a fractional right
ideal, and hence
\[\overline{J\cdot S} = ((JS)^{-1})^{-1} = (SJ^{-1})^{-1} =
(J^{-1})^{-1}S = \overline{J}\cdot S\]
as required.

(b) Again, we may assume that $I\cap R$ is a nonzero, so that $I\cap R$
is a fractional right ideal. Clearly $I\cap R \subseteq \overline{I\cap R}$.
Using part (a) we have
\[ \overline{I\cap R} \subseteq \overline{(I\cap R)\cdot S} \subseteq \overline{I} = I,\]
but $\overline{I\cap R}\subseteq \overline{R} = R$ and hence
$\overline{I\cap R} \subseteq I\cap R$. The result follows.
\end{proof}

\subsection{Pseudo-null modules}
\label{PseudoNull} Let $R$ be an arbitrary ring and $M$ be an
$R$-module. We denote $\Ext_R^j(M,R)$ by $E^j(M)$ . Recall
\cite[Lemma 2.1 and Definition 2.5]{CSS} that an $R$-module $M$ is
said to be \emph{pseudo-null} if $E^0(N) = E^1(N) = 0$ for any
submodule $N$ of $M$. Part (b) of the Proposition below shows that
this extends the notion of pseudo-zero modules in the sense of
\cite[Chapter VII, \S 4.4, Definition 2]{Bour}.

\begin{lem}
Let $0 \to X \to Y \to Z \to 0$ be an exact sequence of $R$-modules.
Then $Y$ is pseudo-null if and only if $X$ and $Z$ are pseudo-null.
\end{lem}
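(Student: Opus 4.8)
The plan is to reduce everything to the long exact sequence of $\Ext_R(-,R)$ attached to a short exact sequence of submodules, using the fact that submodules of a pseudo-null module need to be controlled, and that pseudo-nullity is phrased in terms of $E^0$ and $E^1$ only.

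First I would prove the "if" direction. Assume $X$ and $Z$ are pseudo-null, and let $N$ be an arbitrary submodule of $Y$. Set $N' := N\cap X$, a submodule of $X$, and observe that $N/N'$ embeds into $Y/X \cong Z$, so it is (isomorphic to) a submodule $N''$ of $Z$. Applying $\Ext_R(-,R)$ to $0\to N'\to N\to N''\to 0$ gives the long exact sequence
\[
0 \to E^0(N'') \to E^0(N) \to E^0(N') \to E^1(N'') \to E^1(N) \to E^1(N').
\]
By hypothesis $E^0(N') = E^1(N') = E^0(N'') = E^1(N'') = 0$, so $E^0(N) = E^1(N) = 0$. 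Since $N$ was an arbitrary submodule of $Y$, this shows $Y$ is pseudo-null.

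Next I would prove the "only if" direction. Assume $Y$ is pseudo-null. Every submodule of $X$ is a submodule of $Y$, so $X$ is pseudo-null immediately. For $Z$, let $N$ be a submodule of $Z$ and let $N'$ be its preimage in $Y$ under the surjection $Y\to Z$, so that $0\to X\to N'\to N\to 0$ is exact with both $X$ and $N'$ submodules of $Y$. The long exact sequence reads
\[
0 \to E^0(N) \to E^0(N') \to E^0(X) \to E^1(N) \to E^1(N') \to \cdots,
\]
and since $E^0(N') = E^0(X) = E^1(N') = 0$ by pseudo-nullity of $Y$, we get $E^0(N) = E^1(N) = 0$. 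Hence $Z$ is pseudo-null.

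There is no real obstacle here: the argument is purely formal homological algebra. The only point requiring a little care is the correct choice of auxiliary submodule in each direction — intersecting with $X$ (respectively, taking the preimage of a submodule of $Z$) — so that the relevant three-term exact sequences have their outer terms among the submodules known to be pseudo-null, and then the vanishing of $E^0$ and $E^1$ propagates through the long exact sequence. One should also note that left-exactness of $\Ext^0$ already forces $E^0(N)=0$ from $E^0$ of the neighbouring term, so only the $E^1$ spot needs the two-sided squeeze.
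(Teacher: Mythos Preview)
Your proof is correct and is precisely the argument the paper has in mind: the paper's own proof is a one-line reference to \cite{CSS} together with the remark that it ``follows easily from the long exact sequence of cohomology,'' and what you have written is exactly that long exact sequence argument spelled out in full. The choices of auxiliary submodules (intersection with $X$ in one direction, preimage from $Z$ in the other) are the natural ones, and the vanishing propagates exactly as you describe.
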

\begin{proof} This appears in \cite[\S 2]{CSS} and follows easily
from the long exact sequence of cohomology.
\end{proof}

The following alternative characterisation of pseudo-null modules
over noetherian domains is well-known, but we include a proof for
the convenience of the reader.

\begin{prop} Let $R$ be a noetherian domain and let $M$ be a
finitely generated $R$-module.
\begin{enumerate}[{(}a{)}]
\item $M$ is pseudo-null if and only if $\ann(x)^{-1} = R$ for all $x\in M$.
\item If $R$ is commutative then $M$ is pseudo-null if and only if $\Ann_R(M)^{-1} = R$.
\end{enumerate}
\end{prop}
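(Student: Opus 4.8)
The plan is to prove both directions for part (a) by relating the pseudo-nullity of $M$ to the reflexive closures of annihilator ideals, and then to deduce part (b) from part (a) together with the commutativity of $R$. First I would recall that since $R$ is a noetherian domain it has a skewfield of fractions $Q$, so that for each $x \in M$ the right ideal $\ann(x)$ is either zero or a fractional right ideal of $R$, and the notation $\ann(x)^{-1} = R$ makes sense (with the convention that $0^{-1} = Q$, so the condition $\ann(x)^{-1} = R$ forces $\ann(x) \neq 0$, i.e. $x$ is a torsion element). The key observation is the natural isomorphism $E^0(R/\ann(x)) = \Hom_R(R/\ann(x), R) \cong \{q \in Q : q\,\ann(x) \subseteq R\}^{} $; more precisely, using $\eta$ from \S\ref{Frac}, $E^0(R/J) \cong J^{-1}/R$ for any right ideal $J$ (when $J \ne 0$; and $E^0(R/0) = E^0(0) = 0$ trivially, while $0^{-1}/R = Q/R \ne 0$, so one must be slightly careful and treat $x = 0$, equivalently $\ann(x) = R$, where both conditions hold vacuously). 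So $E^0(R/\ann(x)) = 0$ if and only if $\ann(x)^{-1} = R$ or $\ann(x) = R$; in either case the cyclic submodule $xR \cong R/\ann(x)$ of $M$ satisfies the displayed annihilator condition exactly when $E^0(xR) = 0$.

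For the forward direction, suppose $M$ is pseudo-null. Then every submodule $N$ of $M$ has $E^0(N) = E^1(N) = 0$; in particular for $N = xR$ we get $E^0(R/\ann(x)) = 0$, so by the previous paragraph $\ann(x)^{-1} = R$ (the case $\ann(x) = R$ being subsumed since then $R^{-1} = R$). For the reverse direction, suppose $\ann(x)^{-1} = R$ for all $x \in M$. I would argue that this forces every submodule $N \le M$ to be pseudo-null. Since $R$ is noetherian and $M$ finitely generated, $N$ is finitely generated, say by $x_1, \dots, x_k$; building a filtration with cyclic quotients $R/\ann_N(x_i)$ — each a quotient of $R/\ann_M(x_i)$ — and using the Lemma (pseudo-nullity is closed under extensions and subquotients), it suffices to check that each cyclic module $R/\ann(x)$ is pseudo-null. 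For $E^0$: $E^0(R/\ann(x)) = 0$ follows from $\ann(x)^{-1} = R$ by the key observation. For $E^1$: here I would use that $R/\ann(x)$ is a torsion module (as $\ann(x) \ne 0$) and invoke the standard fact — provable from the short exact sequence $0 \to \ann(x) \to R \to R/\ann(x) \to 0$, which gives $E^1(R/\ann(x)) \cong E^0(\ann(x))/ (\text{image of } E^0(R)) = \ann(x)^\ast / R$ via $\eta$ — that $\ann(x)^\ast \cong (\ann(x))^{-1} = R$, so $E^1(R/\ann(x)) = 0$. Then any submodule of $R/\ann(x)$ is of the form $J/\ann(x)$ with $\ann(x) \subseteq J$, and one checks $J^{-1} = R$ as well (since $J^{-1} \subseteq \ann(x)^{-1} = R$), so the same computation shows $E^0(J/\ann(x)) = E^1(J/\ann(x)) = 0$; hence $R/\ann(x)$ is pseudo-null, and therefore so is $N$, and thus $M$.

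For part (b), assume $R$ is commutative. If $M$ is pseudo-null then each $\ann(x)^{-1} = R$ by (a); since $\Ann_R(M) = \bigcap_x \ann(x) \subseteq \ann(x)$ for any $x$, we get $\Ann_R(M)^{-1} \supseteq \ann(x)^{-1}$... wait — the inclusion goes the other way, so I would instead pick (by noetherianness) finitely many generators $x_1, \dots, x_k$ of $M$, so $\Ann_R(M) = \bigcap_{i=1}^k \ann(x_i)$; then $\Ann_R(M)^{-1} = (\bigcap \ann(x_i))^{-1}$, and since $R$ is commutative and each $\ann(x_i)$ contains the nonzero ideal $\Ann_R(M)$ (which is nonzero because $M$ is torsion, as it is pseudo-null), a product argument gives $\prod \ann(x_i) \subseteq \Ann_R(M) \subseteq \ann(x_j)$, hence $\Ann_R(M)^{-1} \subseteq (\prod \ann(x_i))^{-1}$; and for commutative $R$ one has $(\prod J_i)^{-1} = R$ whenever each $J_i^{-1} = R$ (because $q \prod J_i \subseteq R$ forces $q$ into $\bigcap J_i^{-1} = R$ after peeling off factors one at a time, using that $J_i$ is nonzero so multiplication by a generator is injective). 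This yields $\Ann_R(M)^{-1} = R$. Conversely, if $\Ann_R(M)^{-1} = R$ then since $\Ann_R(M) \subseteq \ann(x)$ for every $x \in M$ we have $R = \Ann_R(M)^{-1} \supseteq \ann(x)^{-1} \supseteq R$, so $\ann(x)^{-1} = R$ for all $x$, and (a) gives that $M$ is pseudo-null.

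\textbf{Main obstacle.} The routine bookkeeping is the identification $E^0(R/J) \cong J^{-1}/R$ and $E^1(R/J) \cong J^\ast/R$ via the maps $\eta$ of \S\ref{Frac}; these are not hard but must be set up carefully, including the degenerate cases $J = 0$ and $J = R$. The genuinely delicate point is the reverse implication in (a): one must propagate the annihilator condition from the generators of $M$ to \emph{all} submodules of $M$, which requires the closure properties from the Lemma together with the observation that $J^{-1} = R$ is inherited by any ideal $J$ containing $\ann(x)$ — and it is here that the noetherian hypothesis (finite generation of submodules) is essential.
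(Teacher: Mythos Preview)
Your overall approach matches the paper's --- use the short exact sequence $0 \to \ann(x) \to R \to xR \to 0$ and filter by cyclic subquotients --- but your ``key observation'' $E^0(R/J) \cong J^{-1}/R$ is false. In a domain, any right $R$-module map $R/J \to R$ sends $1+J$ to an element $r$ with $rJ = 0$, so $r = 0$ whenever $J \neq 0$; thus $E^0(R/J) = 0$ automatically. The identification you actually need, and which you in fact derive correctly later from the long exact sequence, is $E^1(R/J) \cong J^{-1}/R$ (the cokernel of $R \cong E^0(R) \to E^0(J) \cong J^{-1}$). Consequently your forward direction breaks: knowing only $E^0(xR) = 0$ tells you nothing about $\ann(x)^{-1}$. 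The paper uses both $E^0(xR) = 0$ and $E^1(xR) = 0$ from pseudo-nullity, and the long exact sequence then forces the map $R \to \ann(x)^{-1}$ to be an isomorphism.

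There is also a smaller gap in your reverse direction. You aim to prove each $R/\ann(x)$ is itself pseudo-null and, for a submodule $J/\ann(x)$, invoke ``the same computation''; but $J/\ann(x)$ is not of the form $R/J'$, so the computation does not apply directly. The paper avoids this detour: it observes that the hypothesis $\ann(x)^{-1} = R$ is inherited by every submodule $N$ of $M$, so it is enough to prove $E^0(M) = E^1(M) = 0$. One then filters $M$ by cyclic subquotients; each is isomorphic to some $R/J$ with $J$ containing the annihilator of an element of $M$, hence $J^{-1} = R$, hence $E^0 = E^1 = 0$ for that piece, and a long exact sequence finishes. No pseudo-nullity of the cyclic pieces is required. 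Your part (b) is correct and essentially identical to the paper's argument.
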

\begin{proof} (a) Suppose $M$ is pseudo-null and let $x\in M$.
The short exact sequence $0 \to \ann(x) \to R \to xR \to 0$
induces the long exact sequence
\[0 \to E^0(xR) \to E^0(R) \to E^0(\ann(x)) \to E^1(xR) \to 0\]
and $E^0(xR) = E^1(xR) = 0$ since $M$ is pseudo-null. Hence
$\ann(x)^{-1} = R^{-1} = R$ by the remarks made in $\S\ref{Frac}$.

Conversely, suppose that $\ann(x)^{-1} = R$ for all $x \in M$. It
will be enough to show that $E^0(M) = E^1(M) = 0$. Let $N = yR$ be a
quotient of a cyclic submodule $xR$ of $M$. Then $\ann(x) \subseteq
\ann(y)$, so $R \subseteq \ann(y)^{-1} \subseteq \ann(x)^{-1} = R$.
Hence $\ann(y)^{-1} = R$ and the above long exact sequence shows
that $E^0(N) = E^1(N) = 0$.

Because $M$ is finitely generated, $M$ is an extension of finitely
many modules $M_1, \ldots, M_k$ such that each $M_i$ is isomorphic
to a quotient of a cyclic submodule of $M$. The result now follows
from a long exact sequence.

(b) Suppose that $\Ann_R(M)^{-1} = R$. Since $\Ann_R(M) \subseteq
\ann(x)$ for all $x\in M$, part (a) implies that $M$ must be
pseudo-null.

Conversely, suppose that $M$ is pseudo-null and let $x_1,\ldots, x_k$
be a generating set for $M$. Since $M$ is pseudo-null, $\ann(x_i)^{-1} = R$
for all $i$. Since $R$ is commutative, $\Ann_R(M)$ contains the
product $\ann(x_1)\cdots\ann(x_k)$ and it follows easily that
$\Ann_R(M)^{-1} = R$.
\end{proof}

\subsection{Unique factorisation domains}
\label{UFD}

\begin{lem} Let $R$ be a commutative noetherian unique factorisation
domain (UFD) and $I$ be a nonzero ideal of $R$. Then $\overline{I} =
xR$ for some $x \in R$ and $xR/I$ is pseudo-null. Moreover, if $R$
is a graded ring and $I$ is a graded ideal, then $x$ is homogeneous.
\end{lem}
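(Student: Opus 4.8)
The plan is to reduce everything to the structure theory of reflexive ideals in a commutative noetherian UFD. First I would recall the standard fact that in a noetherian UFD, a nonzero reflexive ideal is principal: writing $R$ as a Krull domain, a divisorial ideal corresponds to an element of the divisor class group, which is trivial precisely because $R$ is a UFD, so every divisorial (equivalently, reflexive) ideal is of the form $xR$ for some $x\in R$. Since $\overline{I}=(I^{-1})^{-1}$ is by construction a reflexive fractional ideal contained in $R$, it follows that $\overline{I}=xR$ for some nonzero $x\in R$. Concretely, one takes a nonzero element $a\in I$, factors it into primes, and intersects the principal fractional ideals $q_i R$ where $q_i$ runs over the primes dividing every element of $I$ to the appropriate multiplicity; the resulting $x$ is the "gcd" of $I$, and a direct computation with $I^{-1}$ confirms $\overline{I}=xR$. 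This is the first and most substantive step, though it is genuinely classical.

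Next I would show $xR/I$ is pseudo-null. By Proposition (b) of \S\ref{PseudoNull}, since $R$ is commutative it suffices to prove $\Ann_R(xR/I)^{-1}=R$. Set $J = \{r\in R : rx \in I\}$, so that $J = \Ann_R(xR/I)$ under the identification $xR/I \cong R/J$ via $r\mapsto rx$. I claim $J^{-1}=R$, i.e. $J$ has no nonzero reflexive closure strictly larger than the zero-divisor behaviour allows — more precisely, that $\overline{J}=R$. Indeed, $Jx \subseteq I \subseteq xR$, so $J \subseteq R$; and if some prime $q$ divided $\overline{J}$, then $q^{-1}Jx \subseteq \overline{Jx} = \overline{(Jx)}$, and since $Jx \subseteq I$ we would get $\overline{Jx}\subseteq \overline{I} = xR$, forcing $q^{-1}x$ to be "absorbed" — but $x$ was chosen as the exact gcd of $I$, so no further prime can be extracted. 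Thus $\overline{J}=R$, whence $J^{-1}=R$, and pseudo-nullity follows. The bookkeeping with prime multiplicities here is the part most likely to need care: one must be sure that the $x$ produced in step one really is maximal, so that dividing it further escapes $\overline{I}$.

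Finally, for the graded statement, I would observe that when $R$ is $\mathbb{Z}$-graded (or $\mathbb{N}$-graded) and $I$ is a graded ideal, then $I^{-1}$ is a graded fractional ideal: if $q\in I^{-1}$ then each homogeneous component $q_j$ also satisfies $q_j I \subseteq R$, because $qI\subseteq R$ and $I$ is graded forces the homogeneous pieces of $qI$ to lie in $R$. Hence $\overline{I}=(I^{-1})^{-1}$ is again graded, so $\overline{I}=xR$ with $xR$ a graded ideal, which forces the generator $x$ to be homogeneous (a principal ideal is graded iff it has a homogeneous generator, since the lowest-degree component of any generator already generates). Alternatively, one can simply note that in a graded UFD every prime element can be taken homogeneous after factoring a homogeneous element, so the gcd construction of step one automatically yields a homogeneous $x$. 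The main obstacle, as noted, is pinning down the maximality of $x$ in the pseudo-nullity argument; everything else is an assembly of standard Krull-domain facts together with the already-proved Proposition in \S\ref{PseudoNull}.
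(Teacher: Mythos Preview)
Your approach is correct and follows the same three-step outline as the paper: reflexive ideals in a UFD are principal, the annihilator $J=x^{-1}I$ satisfies $J^{-1}=R$ so $xR/I$ is pseudo-null via Proposition~\ref{PseudoNull}(b), and then a graded argument. Two of your steps are more laboured than necessary, however, and it is worth seeing the paper's shortcuts.

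For pseudo-nullity, you argue by contradiction via prime divisors of $\overline{J}$ and appeal to the ``maximality'' of $x$ as a gcd; you yourself flag this as the delicate point. The paper avoids this entirely with a direct computation: if $q\in J^{-1}$ then $qx^{-1}I = qJ \subseteq R$, so $qx^{-1}\in I^{-1}=\overline{I}^{-1}=(xR)^{-1}=x^{-1}R$, whence $q\in R$. No prime factorisation or maximality reasoning is needed --- the identity $I^{-1}=\overline{I}^{-1}$ does all the work.

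For the graded claim, your route through showing $I^{-1}$ is graded is valid but heavier than required. The paper simply picks any nonzero homogeneous $y\in I\subseteq xR$; then $x\mid y$, and in a graded domain every factor of a homogeneous element is homogeneous (compare top and bottom degrees in $y=xz$), so $x$ is homogeneous.
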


\begin{proof}
By \cite[Chapter VII, \S 4.2, Example 2 and \S 3.1, Definition 1]{Bour},
every reflexive ideal of $R$ is necessarily principal. Hence
$\overline{I} = xR$ for some $x \in R$.

Now let $J = \Ann_R(xR/I) = x^{-1}I$ and suppose that $q \in
J^{-1}$. Then $qJ = qx^{-1}I \subseteq R$ and so $qx^{-1} \in I^{-1}
= \overline{I}^{-1} = x^{-1}R$. Therefore $q\in R$ and $J^{-1} = R$.
Hence $xR/I$ is pseudo-null by Proposition \ref{PseudoNull} (b).

Suppose finally that $R$ and $I$ are graded. Then we can find
a nonzero homogeneous element $y\in I$. Since $I \subseteq xR$
we see that $x$ is a factor of $y$. Because $R$ is a domain,
homogeneous elements can only have homogeneous factors, so
$x$ is necessarily homogeneous.
\end{proof}

\subsection{Filtered rings}
\label{FiltBasics}
A \emph{filtered ring} is a ring $R$ with a filtration $FR=\{F_nR :
n\in\mathbb{Z}\}$ consisting of additive subgroups of $R$ such that
$R = \bigcup_{n\in\mathbb{Z}}F_nR, \; 1 \in F_0R, \;
F_nR \subseteq F_{n+1}R$ and $F_nRF_mR \subseteq F_{n+m}R$ for
all $n,m\in\mathbb{Z}$. Our filtrations will always be
\emph{separated}, meaning that $\bigcap_{n\in\mathbb{Z}}F_nR = 0$.
If $x$ is a nonzero element of $R$, there exists a unique
$n\in\mathbb{Z}$, which is called the \emph{degree} of $x$
and written $n = \deg x$, such that $x \in F_nR - F_{n-1}R$.

The abelian group $\gr R := \oplus_{n\in\mathbb{Z}}F_nR/F_{n-1}R$
becomes a graded ring with multiplication induced by that of $R$
and is called the \emph{associated graded ring} of $R$ with respect
to $FR$. The \emph{principal symbol} of a nonzero element $x$ of
$R$ of degree $n$ is
\[\gr x:= x + F_{n-1}R \in F_nR / F_{n-1}R \subseteq \gr R.\]
If $\gr R$ is a domain then $\gr(xy) = \gr(x)\gr(y)$ for any
nonzero $x,y\in R$.

The \emph{Rees ring} of $R$ (with respect to the filtration
$FR$) is the following subring of the Laurent polynomial
ring $R[t,t^{-1}]$:
\[\widetilde{R} := \bigoplus_{n \in \mathbb{Z}} t^n F_n R.\]
The Rees ring comes equipped with two natural surjective
ring homomorphisms $\pi_1 : \widetilde{R} \to R$ and
$\pi_2 : \widetilde{R} \to \gr R$ which send the indeterminate
$t$ to one and zero, respectively. The map $\pi_1$ is sometimes
called \emph{dehomogenisation}.
\section{Frobenius Pairs}
\label{xxsec2}
\subsection{The classical Frobenius map}
\label{xxsec2.1}
Let $K$ be a field of characteristic $p$ and let $B$ be a
commutative $K$-algebra. Then the Frobenius map $x \mapsto x^p$ is a ring
endomorphism of $B$ and gives an isomorphism of $B$ onto its image
\[B^{[p]} := \{b^p : b \in B\} \]
in $B$ provided $B$ is reduced. We remark at this point that any
derivation $d : B \to B$ is $B^{[p]}$-linear:
\[d(a^pb) = a^pd(b) + pa^{p-1}d(b) = a^pd(b)\]
for all $a,b \in B$.

\subsection{Frobenius pairs}
\label{FrobPair} Let $t$ be a positive integer.
Whenever $\{y_1,\ldots,y_t\}$ is a $t$-tuple of elements of $B$ and
$\alpha = (\alpha_1,\ldots,\alpha_t)$ is a $t$-tuple of nonnegative
integers, we define
\[{\mathbf{y}}^{\alpha} =y_1^{\alpha_1}\cdots y_t^{\alpha_t}.\]
Let $[p-1]$ denote the set $\{0,1,\ldots,p-1\}$ and let $[p-1]^t$ be
the product of $t$ copies of $[p-1]$.

\begin{defn}
Let $A$ be a complete filtered $\FFp$-algebra and let $A_1$ be a
subalgebra of $A$. We always view $A_1$ as a filtered subalgebra of
$A$, equipped with the subspace filtration $F_nA_1 := F_nA \cap
A_1$. We say that $(A,A_1)$ is a \emph{Frobenius pair} if the
following axioms are satisfied:
\begin{enumerate}[{(} i {)}]
\item $A_1$ is closed in $A$,
\item $\gr A$ is a commutative noetherian domain, and we write $B=\gr A$,
\item the image $B_1$ of $\gr A_1$ in $B$ satisfies $B^{[p]} \subseteq B_1$, and
\item there exist homogeneous elements $y_1,\ldots,y_t \in B$ such that
\[B = \bigoplus_{\alpha \in [p-1]^t}  B_1 \mathbf{y}^{\alpha}.\]
\end{enumerate}
\end{defn}

\begin{rem} It is easy to see that $A_1$ is closed in $A$ if and
only if the subspace filtration $\{F_nA_1\}_{n\in\mathbb{Z}}$
on $A_1$ is complete.
\end{rem}

The canonical example to keep in mind is given by Iwasawa algebras
of uniform pro-$p$ groups $G$. We will show in $\S\ref{IwaFrobPair}$
that $(KG,KG^p)$ is always a Frobenius pair.

We will now deduce some consequences of the axioms.

\subsection{The structure of $A$ as an $A_1$-module}
\label{FreeMod}
Let $(A,A_1)$ be a Frobenius pair. We can view $A$ as an $A_1$-bimodule.
Let us choose elements $u_1,\ldots,u_t\in A$
such that $\gr u_i = y_i$ for all $i$ and set
$\mathbf{u}^\alpha := u_1^{\alpha_1}\cdots u_t^{\alpha}$
for all $\alpha\in\mathbb{N}^t$.

\begin{lem} The $A$ is a free left and right $A_1$-module
with basis $\{\mathbf{u}^\alpha : \alpha \in [p-1]^t\}:$
\[A = \bigoplus_{\alpha\in[p-1]^t} A_1 \cdot \mathbf{u}^\alpha
= \bigoplus_{\alpha\in[p-1]^t} \mathbf{u}^\alpha \cdot A_1.\]
\end{lem}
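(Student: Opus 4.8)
The plan is to exploit the compatibility of the filtration with the module structure and to bootstrap from the graded statement (axiom (iv)) back to $A$ itself, using completeness. First I would set up filtered-module language: the elements $\mathbf{u}^\alpha$ with $\alpha\in[p-1]^t$ satisfy $\gr(\mathbf{u}^\alpha)=\mathbf{y}^\alpha$ since $\gr A=B$ is a domain, so principal symbols are multiplicative. Let $F$ denote the induced filtrations on $A$ and on $A_1$; one checks that the subspace filtration on $A_1$ has associated graded ring mapping onto $B_1$, and that axiom (iv) says precisely that $B=\bigoplus_{\alpha\in[p-1]^t}B_1\mathbf{y}^\alpha$ as graded $B_1$-modules, the summand $B_1\mathbf{y}^\alpha$ being a shift of $B_1$ (here I would note $B_1$ is free of rank one over itself, so each summand is "graded free").

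The core step is the standard filtered-to-complete lifting lemma: if $\varphi\colon M\to N$ is a filtered map of complete separated filtered modules such that $\gr\varphi$ is an isomorphism, then $\varphi$ is an isomorphism. I would apply this to the map
\[
\Phi\colon \bigoplus_{\alpha\in[p-1]^t} A_1 \;\longrightarrow\; A,\qquad (a_\alpha)_\alpha \longmapsto \sum_\alpha a_\alpha \mathbf{u}^\alpha,
\]
where the source carries the direct-sum filtration with the $\alpha$-th copy of $A_1$ shifted by $\deg \mathbf{y}^\alpha$. Axiom (i) (equivalently the Remark: $A_1$ is closed iff its subspace filtration is complete) guarantees the source is complete and separated; $A$ is complete and separated by hypothesis. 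The associated graded map $\gr\Phi$ is exactly the isomorphism $\bigoplus_\alpha B_1\mathbf{y}^\alpha \xrightarrow{\sim} B$ of axiom (iv) — here one must check that the filtration on the source was chosen so that $\gr$ of $A_1$-summand-$\alpha$ really is $B_1$ sitting in degrees shifted by $\deg\mathbf{y}^\alpha$, and that $\gr$ commutes with the finite direct sum. Hence $\Phi$ is an isomorphism of left $A_1$-modules, which is the first claimed decomposition. The right-module decomposition $A=\bigoplus_\alpha \mathbf{u}^\alpha A_1$ follows by the symmetric argument, replacing left multiplication by right multiplication in $\Phi$ and using that axiom (iv) is left–right symmetric because $B$ is commutative (so $B_1\mathbf{y}^\alpha=\mathbf{y}^\alpha B_1$).

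The main obstacle, and the only place real care is needed, is the bookkeeping with filtrations: one must confirm that $\gr A_1$ surjects onto $B_1$ compatibly enough that $\gr$ applied to $\bigoplus_\alpha A_1$ (with the prescribed shifts) gives $\bigoplus_\alpha B_1\mathbf{y}^\alpha$ and not something larger, i.e.\ that the subspace filtration on $A_1$ is the "right" one and that no collapsing occurs in the graded pieces. Concretely: $\gr(A_1)=\bigoplus_n F_nA_1/F_{n-1}A_1=\bigoplus_n (F_nA\cap A_1)/(F_{n-1}A\cap A_1)$ injects into $\gr A=B$, with image by definition $B_1$, so in fact $\gr A_1\cong B_1$ as graded rings; thus the $\alpha$-th graded summand of the source of $\Phi$, shifted by $\deg\mathbf{y}^\alpha$, is genuinely $B_1\mathbf{y}^\alpha$. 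Once this identification is in place, the surjectivity and injectivity of $\Phi$ are immediate from the complete-filtered lifting lemma, and nothing else is required.
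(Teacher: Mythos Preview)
Your proposal is correct and takes essentially the same approach as the paper: both arguments reduce to the graded decomposition of axiom (iv), use that $\gr A_1\cong B_1$ via the subspace filtration, and invoke completeness of $A_1$ to lift back to $A$. The only difference is packaging --- the paper verifies injectivity by a direct leading-term contradiction and surjectivity by noting $\gr M=\gr A$, whereas you bundle both into a single application of the filtered-to-complete lifting lemma; the underlying content is identical.
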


\begin{proof}
By symmetry it is sufficient to prove the statement about
left modules, say. Suppose for a contradiction that
$\sum_{\alpha\in T} a_{\alpha}\mathbf{u}^\alpha = 0$,
where $\{a_\alpha \in A_1 : \alpha \in T\}$
is some collection of nonzero elements and $T \subseteq [p-1]^t$
is a nonempty indexing set. Let $n$ denote the maximum of
the degrees of the $a_{\alpha}\mathbf{u}^\alpha$ and
let $S$ denote the subset of $T$ consisting of those
indices $\alpha$ where this maximum is attained. Then
\[\left(\sum_{\alpha\in T}
a_\alpha \mathbf{u}^\alpha\right) + F_{n-1}A =
\sum_{\alpha\in S} \gr a_\alpha \cdot \mathbf{y}^\alpha = 0,\]
which is contradictory to Definition \ref{FrobPair}(iv). Thus the
sum $M := \sum_{\alpha\in[p-1]^t} A_1\cdot \mathbf{u}^\alpha$
is direct.

Now $M$ is a filtered $A_1$-submodule of $A$ and $\gr M$
coincides with $\gr A$. Since $A_1$ is complete, $M$
is equal to $A$ and the result follows.
\end{proof}

\subsection{Derivations}
\label{Derivations}
Let $B_1 \subseteq B$ be commutative rings of
characteristic $p$, such that $B^{[p]} \subseteq B_1$ and
\[B = \bigoplus_{\alpha\in[p-1]^t} B_1\mathbf{y}^\alpha\]
for some elements $y_1,\ldots, y_t$ of $B$.

Fix $j = 1,\ldots,t$ and let $\epsilon_j$ denote the $t$-tuple of
integers having a $1$ in the $j$-th position and zeros elsewhere.
We define a $B_1$-linear map $\partial_j : B \to B$ by setting
\[\partial_j\left(\sum_{\alpha\in[p-1]^t} u_\alpha
\mathbf{y}^\alpha \right) := \sum_{\stackrel{\alpha\in[p-1]^t}
{\alpha_j > 0}} \alpha_ju_\alpha \mathbf{y}^{\alpha - \epsilon_j}.\]

Let $\mathcal{D} := \Der_{B_1}(B)$ denote the set of all $B_1$-linear
derivations of $B$. We now collect some very useful results about
$\mathcal{D}$ and its natural action on $B$. In particular, we can
give a complete characterisation of the $\mathcal{D}$-stable ideals of $B$.

\begin{prop}
\begin{enumerate}[{(}a{)}]
\item The map $\partial_j$ is a $B_1$-linear derivation of $B$ for each $j$.
\item $\mathcal{D} = \bigoplus_{j=1}^t B \partial_j.$
\item For any $x \in B$, $\mathcal{D}(x) = 0$ if and only if $x \in B_1$.
\item An ideal $I \subseteq B$ is $\mathcal{D}$-stable if and only if it is controlled by $B_1$:
\[I = (I\cap B_1)B.\]
\end{enumerate}
\end{prop}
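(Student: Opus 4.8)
The plan is to establish the four parts in the given order, since each builds on the previous ones. For part (a), I would verify directly that $\partial_j$ satisfies the Leibniz rule. The natural approach is to check it on products of basis elements $\mathbf{y}^\alpha \mathbf{y}^\beta$. The subtlety is that $\mathbf{y}^\alpha \mathbf{y}^\beta$ need not be a basis element: when some $\alpha_i + \beta_i \geq p$, one must rewrite $y_i^{\alpha_i+\beta_i} = (y_i^p)\cdot y_i^{\alpha_i+\beta_i-p}$ and absorb the $B^{[p]}$-factor $y_i^p \in B_1$ into the coefficient. So the honest computation is to reduce $\mathbf{y}^\alpha\mathbf{y}^\beta = c_{\alpha,\beta}\, \mathbf{y}^{\gamma}$ where $\gamma_i = (\alpha_i+\beta_i) \bmod p$ and $c_{\alpha,\beta}$ is a product of the $y_i^p$, hence lies in $B_1$; then both $\partial_j$ applied to the left side and the Leibniz expansion on the right reduce to the same expression because $\partial_j$ is $B_1$-linear and because on a single variable $\partial_j(y_j^m) = m y_j^{m-1}$ respects the reduction mod $p$ (here $m < p$ after reduction, and $\partial_j$ kills the $B_1$-part $y_j^p$). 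This is the most computational step but it is routine; the key observation making it work is precisely the remark in \S\ref{xxsec2.1} that derivations are $B^{[p]}$-linear, so $\partial_j$ is forced to annihilate $B_1 \supseteq B^{[p]}$ on the nose.

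For part (b), one inclusion is immediate from (a): each $B\partial_j \subseteq \mathcal{D}$. For the reverse, given $\delta \in \mathcal{D}$, set $b_j := \delta(y_j) \in B$ and consider $\delta' := \delta - \sum_j b_j \partial_j$. Then $\delta'$ is a $B_1$-linear derivation with $\delta'(y_j) = 0$ for all $j$; since $B$ is generated as a $B_1$-algebra by $y_1,\dots,y_t$, the Leibniz rule forces $\delta' = 0$, so $\delta = \sum_j b_j \partial_j$. Directness of the sum $\bigoplus_j B\partial_j$ follows by evaluating $\sum_j b_j \partial_j$ on each $y_k$: one gets $b_k = 0$ for all $k$.

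Part (c) is essentially part of the previous argument. If $\mathcal{D}(x) = 0$ then in particular $\partial_j(x) = 0$ for all $j$; writing $x = \sum_{\alpha} u_\alpha \mathbf{y}^\alpha$ with $u_\alpha \in B_1$, the condition $\partial_j(x) = 0$ for all $j$ forces $\alpha_j u_\alpha = 0$ whenever $\alpha_j > 0$, and since $B_1$ is a domain of characteristic $p$ with $0 < \alpha_j < p$ we get $\alpha_j u_\alpha = 0 \Rightarrow u_\alpha = 0$; hence only $u_0$ survives and $x = u_0 \in B_1$. The converse is clear since each $\partial_j$, and hence all of $\mathcal{D}$ by (b), is $B_1$-linear, so $\mathcal{D}(B_1) = 0$. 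For part (d), if $I = (I\cap B_1)B$ then for $\delta = \sum b_j\partial_j \in \mathcal{D}$ and $a \in I\cap B_1$, $b\in B$ we have $\delta(ab) = a\,\delta(b) \in I$ by $B_1$-linearity, so $I$ is $\mathcal{D}$-stable. Conversely, suppose $I$ is $\mathcal{D}$-stable; it suffices to show $I \subseteq (I\cap B_1)B$, the reverse inclusion being trivial. Take $x \in I$ and write $x = \sum_{\alpha \in [p-1]^t} u_\alpha \mathbf{y}^\alpha$ with $u_\alpha \in B_1$. I would argue that every coefficient $u_\alpha$ lies in $I\cap B_1$, by downward induction on $|\alpha| = \alpha_1 + \cdots + \alpha_t$ (or by iterated application of suitable differential operators). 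Concretely, applying the operator $\partial^\alpha := \partial_1^{\alpha_1}\cdots\partial_t^{\alpha_t}$ to $x$ and using that the $\partial_j$ commute, one extracts $\alpha! \, u_\alpha$ plus terms $\mathbf{y}^\beta$ with $|\beta| > 0$; since each $\alpha_j < p$, the scalar $\alpha! = \prod \alpha_j!$ is a unit in $\FFp \subseteq B_1$, so $u_\alpha$ is a $B_1$-combination of elements of $I$ (namely, applying $\partial^\alpha$ keeps us in $I$ by $\mathcal D$-stability, then taking the "constant term", i.e. the $B_1$-component along $\mathbf{y}^0$, stays in $I\cap B_1$ because that projection is realized by repeatedly subtracting $\mathbf{y}$-divisible pieces handled by the induction). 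The main obstacle is making this extraction clean: one must confirm that the $B_1$-component-along-$\mathbf{y}^0$ map $B \to B_1$ sends $I$ into $I \cap B_1$, which is exactly what the downward induction on $|\alpha|$ delivers — strip off the top-degree-in-$\mathbf{y}$ terms first, each of which is shown to lie in $(I\cap B_1)B$, and descend. Once all $u_\alpha \in I\cap B_1$, we get $x \in (I\cap B_1)B$ as desired.
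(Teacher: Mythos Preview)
Your arguments for (a)--(c) are essentially identical to the paper's, with only cosmetic differences (e.g.\ in (a) the paper checks the Leibniz rule on products $\mathbf{y}^\alpha \cdot y_i$ rather than $\mathbf{y}^\alpha \cdot \mathbf{y}^\beta$, and in (c) the paper argues the contrapositive). One small remark on (c): you appeal to ``$B_1$ is a domain'', but the hypotheses of \S\ref{Derivations} only assume $B_1 \subseteq B$ are commutative rings of characteristic $p$; fortunately all you actually use is that $\alpha_j \in \{1,\ldots,p-1\}$ is a unit in any ring of characteristic $p$, so the argument is fine.

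For (d) your route is genuinely different from the paper's. The paper first passes to the quotient $B/(I\cap B_1)B$, reducing to the case $I \cap B_1 = 0$, and then picks a nonzero $u \in I$ with minimal $m(u) := \max\{|\alpha| : u_\alpha \neq 0\}$; applying any $\partial_j$ either kills $u$ or strictly lowers $m$, so minimality forces $u \in B_1$ by (c), a contradiction. You instead extract each coefficient $u_\alpha$ directly: apply the iterated operator $\partial^\alpha$ to land in $I$, then run downward induction on $|\alpha|$ to peel off the higher-order terms (which already lie in $(I\cap B_1)B$ by the inductive hypothesis), leaving $\alpha!\, u_\alpha \in I \cap B_1$; since each $\alpha_j < p$, the factorial is a unit. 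Both arguments hinge on the same arithmetic fact (the $\partial_j$ never encounter an exponent divisible by $p$), but yours is more constructive while the paper's minimality-plus-reduction argument is shorter and avoids computing $\partial^\alpha(\mathbf{y}^\gamma)$ explicitly. Your exposition of the inductive step is a bit loose --- it would read more cleanly if you stated the induction hypothesis (``$u_\gamma \in I\cap B_1$ for all $|\gamma| > k$'') up front and then observed that the non-constant part of $\partial^\alpha(x)$ involves only such $u_\gamma$ --- but the idea is sound.
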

\begin{proof} (a) Because the $y_i$'s generate $B$ as a $B_1$-algebra
and $\partial_j$ is $B_1$-linear by definition, to show that
$\partial_j$ is a derivation it is sufficient to check that
\[\partial_j(\mathbf{y}^\alpha\cdot y_i) = \partial_j(\mathbf{y}^\alpha)y_i
+ \mathbf{y}^\alpha\cdot \partial_j(y_i)\]
for all $\alpha\in[p-1]^t$ and all $i=1,\ldots,t$.
This can be easily verified, using the fact that $y_k^p \in B_1$
for all $k=1,\ldots, t$.

(b) If $b_j\in B$ are such that $\sum_{j=1}^tb_j\partial_j = 0$, then
$b_i = (\sum_{j=1}^tb_j\partial_j)(y_i) = 0$ for all $i$, so the sum
above is direct. Finally, if $f\in\mathcal{D}$, then it is easy to
see that $f$ and $\sum_{j=1}^t f(y_j)\partial_j$ agree on every
element of $B$ with the form $u\cdot\mathbf{y}^\alpha$ for $u\in
B_1$, so $f = \sum_{j=1}^t f(y_j)\partial_j$ and the result follows.

(c) Suppose $x \notin B_1$ and write $x = \sum_{\alpha \in [p-1]^t}
x_\alpha \mathbf{y}^\alpha$. Then $x_\alpha \neq 0$ for some $\alpha
\neq 0$ and so $\alpha_j\neq 0$ for some $j$. Hence $\partial_j(x)
\neq 0$. The converse is trivial.

(d) ($\Leftarrow$) Let $J = I\cap B_1$. For any $f \in \mathcal{D}$ we have
\[f(I) = f(JB) = Jf(B) \subseteq JB = I\]
so $I$ is $\mathcal{D}$-stable.

($\Rightarrow$) Let $I$ be a $\mathcal{D}$-stable ideal and let $J =
I\cap B_1$. Note that the extension $B_1/J \subseteq B/JB$ satisfies
the same conditions as $B_1 \subseteq B$, and the image of $I$ in
$B/JB$ is stable under every $B_1/J$-linear derivation of $B/JB$ by
part (b). Without loss of generality we may therefore assume that
$I\cap B_1 = 0$, and it will be enough to show that $I = 0$.

Suppose for a contradiction that $I \neq 0$. If
$u = \sum_{\alpha\in[p-1]^t}u_\alpha\mathbf{y}^\alpha \in B$
is a nonzero element, define
\[m(u) := \max\{\alpha_1 + \cdots + \alpha_t: u_\alpha \neq 0\}\]
and choose $u \in I \backslash 0$ such that $m(u)$ is minimal.
If $\partial_j(u) \neq 0$ for some $j$ then $m(\partial_j(u)) < m(u)$
and $\partial_j(u) \in I\backslash 0$ contradicting the minimality of $m(u)$.
Hence $\partial_j(u) = 0$ for all $j$ and therefore $u\in B_1$ by
parts (b) and (c). But then $I\cap B_1 \neq 0$, a contradiction.
Hence $I = 0$ as required.
\end{proof}

\section{A control theorem for normal elements}
\subsection{Main result}\label{ContNormal}
The purpose of this section is to prove the following
\begin{thm}
Let $(A,A_1)$ be a Frobenius pair satisfying the derivation 
hypothesis, suppose that
$B_1$ is a UFD and let $w \in A$ be a normal element. Then the
two-sided ideal $wA$ of $A$ is controlled by $A_1$:
\[wA = (wA \cap A_1)\cdot A.\]
\end{thm}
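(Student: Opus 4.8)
The plan is to reduce the statement about the two-sided ideal $wA$ to a statement about $\gr A$ and then apply Proposition \ref{Derivations}(d). More precisely, I would introduce the graded ideal $\gr(wA) = \{\gr(x) : x \in wA \setminus 0\} \cup \{0\}$ of $B = \gr A$, and show two things: first, that $\gr(wA)$ is $\mathcal D$-stable, so that by Proposition \ref{Derivations}(d) it is controlled by $B_1$, i.e. $\gr(wA) = (\gr(wA) \cap B_1) B$; and second, that this control at the associated-graded level can be lifted to control of $wA$ by $A_1$ at the level of the complete filtered ring $A$. The completeness of $A$ (and of the subspace filtration on the closed subalgebra $A_1$) is what makes the lifting step work: if the symbols of $wA \cap A_1$ generate $\gr(wA)$, then a successive-approximation argument shows $(wA \cap A_1) \cdot A$ is all of $wA$.

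For the first part, the key point is to connect the derivations $\partial_j$ of $B$ with the ring-theoretic structure coming from the normal element $w$. Since $w$ is normal, conjugation $a \mapsto w^{-1} a w$ (in the division ring of fractions, or better, the map $a \mapsto$ the unique $a'$ with $wa = a'w$) is a filtered automorphism $\sigma$ of $A$ fixing $F_nA$ for each $n$ appropriately, and it induces an automorphism $\bar\sigma$ of $B$. The ideal $wA = Aw$ is then $\sigma$-stable, and hence $\gr(wA)$ is $\bar\sigma$-stable. The subtler input is that the ``derivation hypothesis'' — which is exactly designed for this — should guarantee that every relevant automorphism/derivation of $B$ arising this way is $B_1$-linear, or more directly that $\gr(wA)$ is stable under $\mathcal D = \Der_{B_1}(B)$. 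I would unwind the derivation hypothesis to extract precisely: the automorphism $\bar\sigma$ acts trivially on $B/\gr(wA)$ modulo something, so that the associated ``logarithmic derivative'' lands in $\mathcal D$, forcing $\gr(wA)$ to be $\mathcal D$-stable. Here is also where the UFD hypothesis on $B_1$ enters: by Lemma \ref{UFD} applied to $B$ (noting $B$ is a UFD whenever $B_1$ is and the extension is as in \S\ref{Derivations}, or applying the lemma after passing to $B/\gr(wA)\cap B_1 \cdot B$), the reflexive closure of $\gr(wA)$ is principal, generated by a homogeneous element, and one pins down that this generator must lie in $B_1$; combined with $\mathcal D$-stability this yields $\gr(wA) = (\gr(wA) \cap B_1)B$.

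For the lifting step, set $J := wA \cap A_1$, a two-sided ideal of $A_1$; I want to show $JA = wA$. Clearly $JA \subseteq wA$. For the reverse inclusion, take $x \in wA$ nonzero of degree $n$; using $\gr(wA) = (\gr(wA)\cap B_1)B$ and Lemma \ref{FreeMod} (the free-module decomposition $A = \bigoplus_{\alpha} A_1 \mathbf u^\alpha$), I can find $z \in JA$ with $\gr x = \gr z$, hence $x - z \in wA$ has strictly smaller degree. Since the filtration is separated and $A$ is complete, iterating and summing the resulting convergent series shows $x \in JA$; the completeness of $A$ together with closedness of $A_1$ guarantees the partial sums live in $JA$ and converge inside $A$. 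This is essentially the standard ``complete filtered rings: generation at the graded level implies generation'' lemma.

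The main obstacle I expect is the first part — showing $\gr(wA)$ is $\mathcal D$-stable — because it requires carefully exploiting the (as-yet-unstated) derivation hypothesis to control how conjugation by $w$ interacts with the filtration and the Frobenius decomposition $B = \bigoplus B_1 \mathbf y^\alpha$; in particular one must rule out the possibility that the conjugation automorphism $\bar\sigma$ of $B$ moves $\gr(wA)$ while still being ``invisible'' to $\mathcal D$. The normality of $w$ gives an automorphism, but passing from an automorphism of the filtered ring to a derivation of the graded ring (the infinitesimal shadow) is where the real work lies, and the derivation hypothesis is presumably formulated precisely to make this passage valid. Once $\mathcal D$-stability is in hand, Proposition \ref{Derivations}(d), Lemma \ref{UFD}, and the routine completeness argument finish the proof.
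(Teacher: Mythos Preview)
Your proposal has two genuine gaps, and the paper's argument proceeds quite differently.

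First, your mechanism for obtaining $\mathcal{D}$-stability of $\gr(wA)$ does not work. You propose to extract a derivation from the conjugation automorphism $\sigma$ induced by the normal element $w$. But axiom (ii) of a Frobenius pair says that $B = \gr A$ is \emph{commutative}; hence the induced automorphism $\bar\sigma$ of $B$ is the identity, and no nontrivial derivation can be extracted from it. The paper's derivations of $B$ do not come from $w$ at all: they come from \emph{sources of derivations} $\mathbf{a} = \{a_0,a_1,\ldots\} \subset A$, sequences of elements whose commutators with $A$ drop the filtration degree by controlled amounts. The derivation hypothesis (\S\ref{Star}) is a statement about these sources, and normality of $w$ enters only through the fact that $wA$ is a two-sided ideal, so that $[a_r,w]\in wA$.

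Second, and more seriously, your lifting step fails. Even granting $\gr(wA) = (\gr(wA)\cap B_1)\cdot B$, you cannot conclude $wA = (wA\cap A_1)\cdot A$ by successive approximation, because you would need every element of $\gr(wA)\cap B_1$ to be the symbol of some element of $wA\cap A_1$. There is no reason for $\gr(wA\cap A_1)$ to equal $\gr(wA)\cap B_1$: if $\gr w = X$ and $X = X_1 U$ with $X_1\in B_1$ and $U$ a unit of $B$, you can lift to get $w\tilde u$ with $\gr(w\tilde u) = X_1 \in B_1$, but $w\tilde u$ need not lie in $A_1$. Getting $\gr(w')\in B_1$ is only the condition $\delta(w')>0$, which is far weaker than $w'\in A_1$.

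The paper closes this gap by an iterative cleaning procedure (Proposition \ref{Star} and \S\ref{ProofContNormal}). One introduces the invariant $\delta(w)$ measuring how well $w$ can be approximated by elements of $A_1$, and a secondary symbol $Y_w \in B$ recording the leading obstruction. Proposition \ref{xClosures} shows $Y_w$ lies in the $\mathbf{a}$-closure of $XB$ for every source $\mathbf{a}$; the derivation hypothesis then gives $\mathcal{D}(Y_w)\subseteq XB$, and Proposition \ref{Derivations}(c) yields $Y_w \in XB + B_1$. This lets one find a unit $u = 1 - c$ with $\delta(wu) > \delta(w)$. Iterating and using completeness of $A$ and closedness of $A_1$ produces a unit $u$ with $wu \in A_1$, whence $wA = (wu)A \subseteq (wA\cap A_1)\cdot A$. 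The point is that the derivation hypothesis must be invoked at \emph{every} stage of the approximation, not just once at the level of $\gr w$.
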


The derivation hypothesis is explained below in \S\ref{Star} and the
proof is given in \S\ref{ProofContNormal}.

\subsection{Inducing derivations on $\gr A$}
\label{Source}
Let $A$ be a filtered ring with associated graded ring $B$ and
let $a\in A$. Suppose that there is an integer $n\geq 0$ such that
\[[a,F_kA] \subseteq F_{k-n}A\]
for all $k\in\mathbb{Z}$. This induces linear maps
\[\begin{array}{cccc}
\{a, - \}_n :& \frac{F_kA}{F_{k-1}A} &\to & \frac{F_{k-n}A}{F_{k-n-1}A} \\
\quad &\quad \\
& b + F_{k-1}A &\mapsto &[a,b] + F_{k-n-1}A
\end{array}
\]
for each $k\in\mathbb{Z}$ which piece together to give a
graded derivation
\[\{a, - \}_n : B \to B.\]

The idea of inducing derivations of $\gr A$ in this way was
first suggested to the first author by Chris Brookes and
later independently by Ken Brown.

\begin{defn}
A \emph{source of derivations} for a Frobenius pair $(A,A_1)$ is
a subset $\mathbf{a} = \{a_0,a_1,a_2,\ldots\}$ of $A$ such that
there exist functions $\theta, \theta_1:\mathbf{a} \to\mathbb{N}$
satisfying the following conditions:
\begin{enumerate}[{(} i {)}]
\item
$[a_r, F_kA] \subseteq F_{k - \theta(a_r)}A$
for all $r\geq 0$ and all $k\in\mathbb{Z}$
\item
$[a_r, F_kA_1] \subseteq F_{k - \theta_1(a_r)}A$ for all $r\geq 0$
and all $k\in\mathbb{Z}$,
\item $\theta_1(a_r) - \theta(a_r) \to \infty$ as $r \to \infty$.
\end{enumerate}
Let $\mathcal{S}(A,A_1)$ denote the set of all sources of
derivations for $(A,A_1)$.
\end{defn}

The reason behind this definition will hopefully become clear after
Proposition \ref{xClosures} below. By (i), any source of derivations
$\mathbf{a}$ generates a sequence of graded derivations
$\{a_r,-\}_{\theta(a_r)}$ of $B = \gr A$. These derivations are
$B_1$-linear for sufficiently large $r$ by parts (ii, iii).

The subset $\{0\}$ is clearly an example of a source of derivations.
Somewhat less trivially, we will show in Corollary \ref{SourcesIwa}
that if $G$ is a uniform pro-$p$ group and $g \in G$, then
$\{g, g^p, g^{p^2}, \ldots\}$ is a source of derivations for
the the Frobenius pair $(KG,KG^p)$.

\subsection{The delta function}
\label{DeltaFunction} Let $(A,A_1)$ be a Frobenius pair and $n$ be
an integer. Each filtered part $F_nA_1$ is closed in $A_1$ by
definition of the filtration topology, and $A_1$ is closed in $A$ by
assumption. Hence $F_nA_1$ is closed in $A$, which can be expressed
as follows:
\[F_nA_1 = \bigcap_{k\geq 0} \left(F_nA_1 + F_{n-k}A\right).\]

We can now define a key invariant of elements of $A$.

\begin{defn}
For any $w \in A$, let $n = \deg w$. Define
\[\delta(w) = \left\{ \begin{array}{cl}
\max\{k : w \in F_nA_1 + F_{n-k}A\} \quad &\mbox{ if } \quad w\notin A_1 \\
\infty \quad &\mbox{ if } \quad w \in A_1.
\end{array} \right. \]
\end{defn}

Clearly $\delta(w)\geq 0$. Note that if $w\in F_nA \backslash A_1$,
then $w \notin F_nA_1 + F_{n-k}A$ for some $k\geq 0$ by the above
remarks, so the definition makes sense and $\delta(w)$ is finite.
The number $\delta(w)$ measures how closely the element $w$ can be
approximated by elements of $A_1$. It should be remarked that
$\delta(w) > 0$ if and only if $\gr w \in B_1$, since both
conditions are equivalent to $w \in F_nA_1 + F_{n-1}A$.

Now suppose that $w\in A \backslash A_1$. By the definition of $\delta$,
we can find elements $x\in F_nA_1$ and $y\in F_{n-\delta}A$ such
that $w = x + y$; if
$\delta = 0$ we take $x$ to be zero. Note that $y \notin F_{n - \delta - 1}A$
by the maximality of $\delta$ and hence
\[Y_w := \gr y = y + F_{n - \delta - 1}A.
\]
In view of our assumption on $x$, we have $Y_w = \gr w$ when $\delta
= 0$.

\subsection{$\mathbf{a}$-closures}
\label{xClosures}
If $w$ is an element of a right ideal $I$ of $A$,
then the symbol of $w$, $\gr w$ always lies in the associated graded
ideal $\gr I$ of $B$. Naturally there are many elements $w$ having
the same symbol, so some information is lost when one passes to the
symbol of $w$. It turns out that if the ideal $I$ is two-sided,
there is a way to save some of this information.

\begin{defn} Let $\mathbf{a}$ be a source of derivations for a
Frobenius pair $(A,A_1)$ and $I$ be a graded ideal of $B$. We say
that the homogeneous element $Y$ of $B$ lies in the
\emph{$\mathbf{a}$-closure} of $I$ if $\{a_r,Y\}_{\theta(a_r)}$ lies
in $I$ for all $r \gg 0$.
\end{defn}

\begin{prop} Let $(A,A_1)$ be a Frobenius pair, $I$ be a
two-sided ideal of $A$ and $w\in I\backslash A_1$. Then $Y_w$ lies
in the $\mathbf{a}$-closure of $\gr I$ for any source of derivations
$\mathbf{a}$.
\end{prop}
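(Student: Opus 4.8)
The plan is to compute the graded derivative $\{a_r,Y_w\}_{\theta(a_r)}$ not directly from $Y_w$, but from the commutator $[a_r,w]$, so as to bring the hypothesis that $I$ is two-sided into play: since $w\in I$ and $a_r\in A$, we have $[a_r,w]=a_rw-wa_r\in I$.

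First I would fix notation. Put $n=\deg w$ and $\delta=\delta(w)$, which is a finite nonnegative integer because $w\notin A_1$. As in \S\ref{DeltaFunction}, write $w=x+y$ with $x\in F_nA_1$ and $y\in F_{n-\delta}A\backslash F_{n-\delta-1}A$, so that $Y_w=\gr y=y+F_{n-\delta-1}A$ and hence, for every $r$,
\[\{a_r,Y_w\}_{\theta(a_r)}=[a_r,y]+F_{n-\delta-\theta(a_r)-1}A\ \in\ \frac{F_{n-\delta-\theta(a_r)}A}{F_{n-\delta-\theta(a_r)-1}A}.\]
(If $\delta=0$ then $x=0$ and $y=w$, and everything below goes through trivially.)

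Next I would estimate the two summands of $[a_r,w]=[a_r,y]+[a_r,x]$. Axiom (i) of a source of derivations gives $[a_r,y]\in F_{n-\delta-\theta(a_r)}A$, while axiom (ii) gives $[a_r,x]\in F_{n-\theta_1(a_r)}A$. By axiom (iii) we have $\theta_1(a_r)-\theta(a_r)\geq\delta+1$ for all $r\gg0$, so for such $r$, $F_{n-\theta_1(a_r)}A\subseteq F_{n-\delta-\theta(a_r)-1}A$; thus $[a_r,x]\in F_{n-\delta-\theta(a_r)-1}A$ and
\[[a_r,w]\equiv[a_r,y]\pmod{F_{n-\delta-\theta(a_r)-1}A}.\]
In particular $[a_r,w]\in F_{n-\delta-\theta(a_r)}A$ for all $r\gg0$; combined with $[a_r,w]\in I$ this gives $[a_r,w]\in I\cap F_{n-\delta-\theta(a_r)}A$, so the image of $[a_r,w]$ in $F_{n-\delta-\theta(a_r)}A/F_{n-\delta-\theta(a_r)-1}A$ lies in $\gr I$. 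By the displayed congruence, that image is exactly $\{a_r,Y_w\}_{\theta(a_r)}$. Hence $\{a_r,Y_w\}_{\theta(a_r)}\in\gr I$ for all $r\gg0$, which is precisely the assertion that $Y_w$ lies in the $\mathbf{a}$-closure of $\gr I$.

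There is no deep obstacle here: the argument is a bookkeeping of filtration degrees. The one genuinely load-bearing estimate is $\theta_1(a_r)-\theta(a_r)\geq\delta(w)+1$, supplied by axiom (iii), which is exactly what makes the error term $[a_r,x]$ negligible in the precise graded piece where $\{a_r,Y_w\}_{\theta(a_r)}$ lives; conceptually, the step to notice is that one should approximate with $w$ in place of $y$ so that two-sidedness of $I$ can be used at all.
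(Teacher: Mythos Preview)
Your proof is correct and follows essentially the same approach as the paper's own proof: decompose $w=x+y$, use axioms (i) and (ii) to bound $[a_r,y]$ and $[a_r,x]$ in the filtration, invoke axiom (iii) to make $[a_r,x]$ drop below the relevant graded piece for $r\gg0$, and then identify $\{a_r,Y_w\}_{\theta(a_r)}$ with the image of $[a_r,w]\in I$ in that graded piece. The only cosmetic difference is that you explicitly single out the $\delta=0$ case, which the paper handles uniformly via the convention $x=0$.
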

\begin{proof} Let us write $w = x + y$ as in the previous subsection.
Since $\mathbf{a}$ is a source of derivations, we can find an
integer $r_0\geq 1$ such that $\theta_1(a_r) - \theta(a_r) > \delta$
for all $r \geq r_0$. Therefore
\[\begin{aligned}
{[a_r,x]}\; &\in F_{n - \theta_1(a_r)}A \subseteq
F_{n - \delta - \theta(a_r) - 1}A \quad\mbox{and} \\
[a_r,y] \; &\in F_{n - \delta - \theta(a_r)}A,
\end{aligned}
\]
for all $r \geq r_0$. Hence
$$\begin{aligned}
{[a_r,w]} \; &\in F_{n - \delta - \theta(a_r)}A,
\quad \mbox{and} \\
[a_r,w] \; & \equiv [a_r,y] \mod
F_{n - \delta - \theta(a_r) - 1}A
\end{aligned}
$$
for all $r\geq r_0$. We can rewrite the above as follows:
\[[a_r,w] + F_{n-\delta-\theta(a_r) - 1}A =
[a_r,y] + F_{n-\delta-\theta(a_r) - 1}A
= \{a_r, Y_w\}_{\theta(a_r)}\]
for $r \geq r_0$. Since $w \in I$ and $I$ is a two-sided ideal,
this element must always lie in the ideal $\gr I$ of $B$, and
hence $Y_w$ lies in the $\mathbf{a}$-closure of $\gr I$ as required.
\end{proof}
Each source of derivations $\mathbf{a}$ gives rise to a sequence of
derivations $\{a_r,-\}_{\theta(a_r)}$ of $B$, and some or all of
these could well be zero. To ensure that we get an interesting
supply of derivations of $B$, we now introduce a condition which
holds for Iwasawa algebras of only rather special uniform pro-$p$ groups.

\subsection{Derivation hypothesis}
\label{Star}
Recall that $\mathcal{D}$ denotes the set of all $B_1$-linear
derivations of $B$ and $\mathcal{S}(A,A_1)$ denotes the set of
all sources of derivations for $(A,A_1)$. Our derivation hypothesis
is really concerned with the action of the derivations induced by
$\mathcal{S}(A,A_1)$ on the graded ring $B$.

\begin{defn}
Let $(A,A_1)$ be a Frobenius pair and $X \in B$ be an arbitrary
homogeneous element. We say that $(A,A_1)$ satisfies the {\sf derivation 
hypothesis} if whenever a homogeneous element $Y \in B$ lies in the
$\mathbf{a}$-closure of $XB$ for all
$\mathbf{a}\in\mathcal{S}(A,A_1)$, we must have $\mathcal{D}(Y)
\subseteq XB$.
\end{defn}

Assuming the derivation hypothesis, it is possible to ``clean" a normal
element by multiplying it by a unit. The following Proposition
forms the inductive step in the proof of Theorem \ref{ContNormal}.

\begin{prop}
Let $(A,A_1)$ be a Frobenius pair satisfying the derivation hypothesis and
let $w\in A \backslash A_1$ be a normal element.
Then there exists a unit $u\in A$ such that $\delta(wu) > \delta(w)$.
Moreover, if $\delta(w) > 0$ then $u = 1 - c$ for some $c\in F_{-\delta(w)}A$.
\end{prop}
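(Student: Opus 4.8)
Write $n=\deg w$ and $\delta=\delta(w)$, and decompose $w=x+y$ with $x\in F_nA_1$ and $y\in F_{n-\delta}A$ as in \S\ref{DeltaFunction}, so that $Y_w=\gr y$ is the relevant homogeneous symbol of degree $n-\delta$. The key structural input is that since $w$ is normal, $wA$ is a two-sided ideal of $A$, hence $\gr(wA)$ is a graded two-sided ideal of the commutative ring $B$; and because $\gr w$ is normal (indeed central) in $B$, one checks $\gr(wA)=\gr(w)B$. Now Proposition \ref{xClosures} tells us that $Y_w$ lies in the $\mathbf a$-closure of $\gr(wA)=(\gr w)B$ for \emph{every} source of derivations $\mathbf a\in\mathcal S(A,A_1)$. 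Applying the derivation hypothesis with $X=\gr w$ and $Y=Y_w$, we conclude
\[\mathcal D(Y_w)\subseteq (\gr w)B.\]

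**Extracting a clean-up element.** The point of $\mathcal D(Y_w)\subseteq (\gr w)B$ is that it forces $Y_w$ itself to be divisible by $\gr w$ up to an element of $B_1$. More precisely, I would argue: write $Y_w=\sum_{\alpha\in[p-1]^t}c_\alpha\mathbf y^\alpha$ with $c_\alpha\in B_1$; then $\partial_j(Y_w)\in(\gr w)B$ for all $j$ by hypothesis, and since $B=\bigoplus_\alpha B_1\mathbf y^\alpha$ with $B^{[p]}\subseteq B_1$, and $\gr w\in B_1$ (this is precisely the content of $\delta(w)>0$ when that holds; in the case $\delta(w)=0$ one has $Y_w=\gr w$ and the argument is easier), one deduces that each $c_\alpha$ with $\alpha\neq 0$ is divisible by $\gr w$ in $B$. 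Hence $Y_w-c_0\in(\gr w)B$, i.e. $Y_w\equiv (\gr w)\cdot z \pmod{B_1}$ for some homogeneous $z\in B$ of degree $\delta$ (degree $0$ when $\delta=0$, in which case $z$ is a scalar and the unit is a scalar — handle this degenerate sub-case first). Lift $z$ to a homogeneous-degree element: choose $c\in F_{-\delta}A$ with $\gr c$ equal to this $z$ transported appropriately (the degree bookkeeping: $\gr w$ has degree $n$, so $z$ has degree $n-\delta-n=-\delta$; thus $c\in F_{-\delta}A$, and $c\in F_{-1}A$ once $\delta\geq1$, so $1-c$ is a unit in the complete filtered ring $A$). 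Set $u=1-c$. Then $wu=w-wc$, and computing symbols: $\gr(wc)=\gr(w)\gr(c)$ has degree $n-\delta$ and equals the part of $Y_w$ not in $B_1$, so $w-wc\in F_nA_1+F_{n-\delta-1}A$, giving $\delta(wu)>\delta(w)$.

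**The main obstacle.** The genuinely delicate step is the passage from $\mathcal D(Y_w)\subseteq(\gr w)B$ to the divisibility statement $Y_w\in (\gr w)B+B_1$, together with the correct degree bookkeeping that makes the lifted element $c$ land in $F_{-\delta(w)}A$ (and in particular in $F_{-1}A$ so that $1-c$ is invertible in the complete ring $A$). The divisibility step uses that $\partial_j$ kills exactly $B_1$ (Proposition \ref{Derivations}(c)) and that applying all the $\partial_j$ to $(\gr w)\mathbf y^\alpha$ recovers, by $B_1$-linearity of $\partial_j$ and $\gr w\in B_1$, the factor $\alpha_j(\gr w)c_\alpha\mathbf y^{\alpha-\epsilon_j}$; since $p\nmid\alpha_j$ for $0<\alpha_j\leq p-1$, each coefficient $(\gr w)c_\alpha$ with $\alpha\neq 0$ lies in $(\gr w)B$, but one must then cancel the non-zero-divisor $\gr w$ (legitimate as $B$ is a domain) to conclude $c_\alpha\in(\gr w)B$ — and here one must be slightly careful that $c_\alpha\in B_1$ while $\gr w\in B_1$, so the quotient lies in $\Frac(B_1)\cap B=B_1$ (using that $B_1$ is a UFD, hence integrally closed, is not even needed — $c_\alpha/\gr w$ is an element of $B$ with all $\partial_j$ vanishing on it, hence in $B_1$). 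Once this is pinned down, assembling $u=1-c$ and verifying $\delta(wu)>\delta(w)$ is a routine symbol computation, and the parenthetical refinement "$u=1-c$ with $c\in F_{-\delta(w)}A$ when $\delta(w)>0$" falls out of the degree accounting.
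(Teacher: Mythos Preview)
Your treatment of the case $\delta := \delta(w) > 0$ is essentially the paper's. Once $X := \gr w$ lies in $B_1$, the ideal $XB$ is stable under each $B_1$-linear $\partial_j$, so $\mathcal{D}(Y_w) \subseteq XB$ says the image of $Y_w$ in $B/XB$ is killed by every $\partial_j$ and hence lies in the image of $B_1$ by Proposition~\ref{Derivations}(c). Thus $Y_w = XC + Z$ with $C \in B$ and $Z \in B_1$ homogeneous (your degree bookkeeping has a harmless sign slip: $\deg C = -\delta$). One must also check $C \neq 0$ --- otherwise $Y_w = Z \in B_1$, and lifting $Z$ to $A_1$ contradicts the maximality of $\delta$ --- but after that, lifting $C$ to $c \in F_{-\delta}A$ and computing symbols gives $\delta(w(1-c)) > \delta$, just as in the paper.

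The genuine gap is the case $\delta = 0$. Here $Y_w = X \notin B_1$, so your coefficient-matching argument, which relied on $X \in B_1$ to write $XB = \bigoplus_\alpha XB_1\mathbf{y}^\alpha$, no longer applies. Your parenthetical that ``$z$ is a scalar and the unit is a scalar'' is not an argument: the relation $X \equiv Xz \pmod{B_1}$ is vacuously satisfied by $z = 1$, and this produces no unit $u$ with $\gr(wu) \in B_1$. The paper's treatment of $\delta = 0$ is different in kind. From $\mathcal{D}(X) \subseteq XB$ one sees that $XB$ is $\mathcal{D}$-stable, hence controlled by $B_1$ via Proposition~\ref{Derivations}(d): $XB = (XB \cap B_1)B$. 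Now the standing UFD hypothesis on $B_1$ --- which you never invoke --- is essential: $XB \cap B_1$ is reflexive in $B_1$ (since $B$ is free over $B_1$), hence principal, say $X_1 B_1$ with $X_1 \in B_1$ homogeneous. Then $XB = X_1 B$, so there is a homogeneous unit $U \in B$ with $XU = X_1$. Lifting $U$ and $U^{-1}$ to $u, v \in A$ gives $uv \equiv 1 \bmod F_{-1}A$, whence $u$ is a unit in the complete ring $A$, and $\gr(wu) = X_1 \in B_1$ forces $\delta(wu) > 0$. This $u$ is not in general of the form $1 - c$ with $c \in F_{-1}A$; that is precisely why the ``Moreover'' clause in the statement is restricted to $\delta(w) > 0$.
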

\begin{proof}
Write $w = x + y$ as in $\S\ref{DeltaFunction}$, let $X = \gr w$
and $Y = Y_w = \gr y$. By Proposition \ref{xClosures}, $Y$ lies
in the $\mathbf{a}$-closure of $\gr wA = XB$ for all
$\mathbf{a}\in \mathcal{S}(A,A_1)$ and hence $\mathcal{D}(Y) \subseteq XB$
because $(A,A_1)$ satisfies the derivation hypothesis.

Suppose first that $\delta := \delta(w) = 0$, so that $Y = X$. Then
the ideal $XB$ of $B$ is $\mathcal{D}$-stable and is hence
controlled by $B_1$ by Proposition \ref{Derivations}(d):
\[XB = (XB \cap B_1)\cdot B.\]
Because $B$ is a free $B_1$-module, $XB \cap B_1$ is a reflexive
ideal of $B_1$ by Proposition \ref{ReflIdeals}(b). Since $B_1$
is a UFD by assumption, $XB \cap B_1 = X_1B_1$ for some homogeneous
element $X_1 \in B_1$ by Lemma \ref{UFD}.

Hence $XB = X_1B$ and we can therefore find a homogeneous unit $U\in
B$ such that $X_1 = XU$. Choose $u,v\in A$ such that $\gr u = U$ and
$\gr v = U^{-1}$; then $uv \equiv 1 \mod F_{-1}A$. But $A$ is
complete so $1 + F_{-1}A$ consists of units in $A$ and hence $u$ is
a unit. Since $\gr(wu) = XU = X_1 \in B_1$, it follows that
$\delta(wu) > 0 = \delta(w)$ as required.

Now suppose that $\delta > 0$; then $X$ must lie in $B_1$. Applying
Proposition \ref{Derivations}(c) to the image of $Y$ in $B/XB$
yields that
\[Y \in XB + B_1.\]
Since $X$ and $Y$ are homogeneous, we can find homogeneous
elements $C \in B$ and $Z \in B_1$ such that
\[Y = XC + Z;\]
moreover $\deg Y = \deg XC$ if $XC \neq 0$ and $\deg Y = \deg Z$ if $Z \neq 0$.

Suppose for a contradiction that $C = 0$. Then $Y = Z \in B_1$. Hence
we can find $x' \in F_{n-\delta}A_1$ such that
\[x' \equiv y \mod F_{n-\delta-1}A.\]
Thus $w - (x + x') \in F_{n - \delta - 1}A$, which is
contradictory to the maximality of $\delta$. So $C \neq 0$ and hence
$\deg C = \deg Y - \deg X = - \delta$. Note that $\deg C < 0$.

We can find $c \in A$ such that $\gr c = C$. Then
\[w(1-c) = (x + y)(1 - c) \equiv x + y - xc \mod F_{n - \delta - 1}A\]
since $\deg(yc) < n -\delta$. But
\[y - xc + F_{n-\delta-1}A = Y - XC = Z \in B_1,\]
so we can find $z \in A_1$ such that $y - xc \equiv z \mod F_{n - \delta - 1}A$ and hence
\[w(1-c) - (x + z) \in F_{n - \delta - 1}A.\]
Since $\deg Z = \deg Y$ if $Z \neq 0$, $z \in F_{n - \delta}A$ and
hence $x + z \in F_nA_1$. This implies that $\delta(w(1-c)) > \delta = \delta(w)$.

Finally, since $c \in F_{-\delta(w)}A \subseteq F_{-1}A$ and $A$
is complete, $u := 1 - c$ is a unit in $A$ and $\delta(wu) > \delta(w)$
by construction.
\end{proof}

\subsection{Proof of Theorem \ref{ContNormal}}
\label{ProofContNormal}
\begin{proof}
It will be enough to construct a unit $u \in A$ such that $wu \in A_1$.

If $w$ already happens to lie in $A_1$ then we can take $u = 1$, so
assume that $w \notin A_1$. By Proposition \ref{Star} there exists a
unit $u_0 \in A$ such that $\delta(wu_0) > 0$.

Let $w_0 := wu_0$. Using Proposition \ref{Star} we can inductively
construct a sequence of normal elements $w_1, w_2, \ldots$ of $A$
and a sequence of elements $c_1, c_2, \ldots$ of $A$, such that
for all $i\geq 0$,
\begin{itemize}
\item $c_{i+1} \in F_{-\delta(w_i)}A$,
\item $w_{i+1} = w_i(1 - c_{i+1})$,
\item $\delta(w_{i+1}) > \delta(w_i)$ if $w_i \notin A_1$.
\end{itemize}
Here we interpret $F_{-\infty}A$ as the zero subspace of $A$.
With this convention, the sequence $c_i$ converges to zero as
$i\to\infty$ by construction, so the limit
\[u := \lim_{i\to \infty} u_0(1 - c_1)\cdots(1 - c_i)\]
exists in $A$ by the completeness of $A$. Note that $u$ is
unit because we can write down an inverse having the same
form as $u$, and that $wu = \lim_{i\to\infty}w_i$.

We will now show that $wu$ lies in $A_1$. Since $A_1$ is closed in
$A$, it will be sufficient to show that $wu \in A_1 + F_kA$ for all
$k\in\mathbb{Z}$. Let $n = \deg w_0$ and note that $\deg w_i = n$
for all $i\geq 0$ by construction. Since $w_i \to wu$ and
$\delta(w_i) \to \infty$ as $i\to \infty$, we see that for $i \gg
0$, $wu - w_i \in F_kA$ and $w_i \in F_nA_1 + F_kA$. Hence $wu \in
F_nA_1 + F_kA \subseteq A_1 + F_kA$, as required.
\end{proof}

\section{Microlocalisation}
\label{MicroLoc}

\subsection{Notation}
\label{MicLocBasic} We briefly recall some basic facts about the
theory of algebraic microlocalisation, following \cite{Li} and
\cite{AVV}. Our notation will be slightly non-standard. Throughout
$\S \ref{MicroLoc}$ we will make the following assumptions:
\begin{itemize}
\item $R$ is a filtered ring whose Rees ring $\widetilde{R}$ is noetherian,
\item $T$ is a right Ore subset of $\gr R$ consisting of
homogeneous regular elements.
\end{itemize}
Since $R$ and $\gr R$ are homomorphic images of $\widetilde{R}$ by
$\S \ref{FiltBasics}$, these rings must also be noetherian. We
should remark at this point that if the filtration on $R$ is
complete and $\gr R$ is noetherian, then the filtration on $R$ is
\emph{zariskian}: see \cite[Chapter II, \S 2.1, Definition 1 and \S
2.2, Proposition 1]{LV}. In particular $\widetilde{R}$ is
necessarily noetherian.

\subsection{Lifting Ore sets}
\label{LiftOre}
Let $\widetilde{T}$ denote the homogeneous inverse image
of $T$ in $\widetilde{R}$:
\[\widetilde{T} := \{r \in \widetilde{R} : r \mbox{ is homogeneous and } \pi_2(r) \in T\}.\]
It can be shown that $\widetilde{T}$ is a right Ore set
in $\widetilde{R}$ \cite[Corollary 2.2]{Li}, so we may form the
Ore localisation $\widetilde{R}_{\widetilde{T}}$. This is still
a $\mathbb{Z}$-graded ring.

Let $S:=\pi_1(\widetilde{T}) \subseteq R$. This is a right
Ore set in $R$ and in fact
\[S = \{r \in R : \gr r \in T\}.\]
Note that $S$ consists of regular elements in $R$, since every
element of $T$ is assumed to be regular. It follows that $R$ embeds
into the Ore localisation $R_S$.

The surjection $\pi_1 : \widetilde{R} \twoheadrightarrow R$ extends
to surjection $\pi_1 : \widetilde{R}_{\widetilde{T}} \twoheadrightarrow R_S$.
The grading on $\widetilde{R}_{\widetilde{T}}$ induces a
filtration on $R_S$, as in \cite[Proposition 2.3(1)]{Li}:
\[F_nR_S := \pi_1((\widetilde{R}_{\widetilde{T}})_n).\]
Here $(\widetilde{R}_{\widetilde{T}})_n$ denotes the $n^{\mbox{th}}$-graded
part of $\widetilde{R}_{\widetilde{T}}$.

\begin{lem}
The filtration on $R_S$ is given explicitly by the formula
\[F_nR_S = \{rs^{-1} : r\in R, s\in S\quad\mbox{and}\quad\deg r - \deg s \leq n\}\]
for all integers $n$. This filtration is zariskian.
\end{lem}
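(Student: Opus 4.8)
The plan is to verify the two claims of the lemma separately, working inside the graded ring $\widetilde{R}_{\widetilde{T}}$ and pushing down along $\pi_1$. First I would establish the formula for $F_nR_S$. The inclusion $\supseteq$ is the easier direction: given $r\in R$ and $s\in S$ with $\deg r-\deg s\le n$, I would lift $r$ to $t^{\deg r}r\in\widetilde{R}$ and $s$ to an element $\sigma:=t^{\deg s}s\in\widetilde{T}$, so that $(t^{\deg r}r)\sigma^{-1}$ is a homogeneous element of $\widetilde{R}_{\widetilde{T}}$ of degree $\deg r-\deg s\le n$; multiplying by a suitable power of $t$ (which is a unit only after inverting, so rather: noting that $t\cdot(\widetilde{R}_{\widetilde{T}})_k\subseteq(\widetilde{R}_{\widetilde{T}})_{k+1}$) lands it in $(\widetilde{R}_{\widetilde{T}})_n$, and applying $\pi_1$ gives $rs^{-1}\in F_nR_S$. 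For the reverse inclusion $\subseteq$, I would take an arbitrary element of $(\widetilde{R}_{\widetilde{T}})_n$; by the construction of the Ore localisation of the graded ring $\widetilde{R}$ at the homogeneous Ore set $\widetilde{T}$, every homogeneous element of degree $n$ can be written as $\rho\sigma^{-1}$ with $\rho\in\widetilde{R}$ homogeneous, $\sigma\in\widetilde{T}$ homogeneous, and $\deg\rho-\deg\sigma=n$. Writing $\rho=t^{k}r$ with $r\in F_kR$ and $\sigma=t^{\ell}s$ with $s\in S$, $\gr s\in T$, one has $\deg_{\widetilde{R}}\rho=k$, $\deg_{\widetilde{R}}\sigma=\ell$; applying $\pi_1$ gives $\pi_1(\rho\sigma^{-1})=rs^{-1}$ with $\deg r\le k$, $\deg s\le \ell$, and $k-\ell=n$, so $\deg r-\deg s\le n$ provided one has chosen the lifts so that equality of filtration degrees is respected. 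Here one must be a little careful: the natural lift of $s\in S$ has $\deg_{\widetilde R}(t^{\deg s}s)=\deg s$ and $\pi_2(t^{\deg s}s)=\gr s\in T$, so it genuinely lies in $\widetilde T$; the point is to reconcile $\deg_{\widetilde R}$ of the numerator and denominator with the actual degrees $\deg r$, $\deg s$ in $R$, which is exactly the content of \cite[Proposition 2.3(1)]{Li} combined with the definition $F_nR_S=\pi_1((\widetilde R_{\widetilde T})_n)$.

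Once the formula is in hand, the statement that this filtration is zariskian follows from the remark already recorded in \S\ref{MicLocBasic}: a complete filtration with noetherian associated graded ring is zariskian. Concretely I would argue that $\widetilde{R_S}\cong\widetilde R_{\widetilde T}$ (the Rees ring of the filtered ring $R_S$ with the filtration just described is the graded localisation), and that $\widetilde R_{\widetilde T}$ is noetherian because it is an Ore localisation of the noetherian ring $\widetilde R$; since the Rees ring is noetherian, the filtration on $R_S$ is zariskian by the standard criterion (\cite[Chapter II, \S 2.1--2.2]{LV}). Alternatively, one checks directly that $\gr R_S\cong(\gr R)_T$, which is noetherian as a localisation of the noetherian ring $\gr R$, and that the $F_{-1}R_S$-adic topology is complete, again invoking the criterion of \cite{LV}.

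I expect the main obstacle to be the bookkeeping in the $\subseteq$ direction of the explicit formula — specifically, showing that an arbitrary element of $(\widetilde R_{\widetilde T})_n$, which a priori is a $\mathbb Z$-linear combination of homogeneous fractions, can be brought to a single fraction $\rho\sigma^{-1}$ with $\rho,\sigma$ homogeneous and the degrees matching up correctly under $\pi_1$. This requires the right Ore condition for $\widetilde T$ in $\widetilde R$ (to find a common denominator) together with a careful tracking of how $\deg_{\widetilde R}$ on numerator and denominator relates to $\deg$ in $R$ and $S$; the inequality $\deg r-\deg s\le n$ rather than equality arises precisely because $\pi_1$ can strictly lower degree (it kills $t-1$, not a graded ideal). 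Everything else is routine: the $\supseteq$ direction is a direct lift, and the zariskian claim is an immediate application of results already cited.
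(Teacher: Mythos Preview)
Your approach to the explicit formula is correct and close in spirit to the paper's, though the packaging differs. The paper first observes (``decoding the definition'') that $F_nR_S$ is the additive subgroup generated by $L_n:=\{rs^{-1}:\deg r-\deg s\le n\}$, and then spends its effort showing that $L_n$ is already closed under addition: given $r_is_i^{-1}\in L_n$, use the Ore condition for $S$ in $R$ to write $s_1u_1=s_2u_2=s$ with $u_1\in S$, and check $\deg(r_1u_1+r_2u_2)-\deg s\le n$ via the key identity $\deg(xy)=\deg x+\deg y$ for $y\in S$. Your route through $\widetilde R_{\widetilde T}$ is equally valid; in fact your anticipated ``main obstacle'' dissolves, because an element of $(\widetilde R_{\widetilde T})_n$ is already a single homogeneous fraction: writing it as $x\sigma^{-1}$ with $\sigma\in\widetilde T$ homogeneous, one has $x=(x\sigma^{-1})\sigma$ homogeneous in $\widetilde R$ (using that $\widetilde T$ consists of regular elements). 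So no common-denominator argument is needed at that stage, and you get $F_nR_S\subseteq L_n$ directly --- slightly slicker than the paper. Note also that for $\sigma=t^\ell s\in\widetilde T$ one necessarily has $\deg s=\ell$ (not merely $\le\ell$), since $\pi_2(\sigma)\in T$ must be nonzero; this is what makes the inequality $\deg r-\deg s\le k-\ell=n$ go through.

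There is a genuine gap in your zariskian argument. Your second alternative claims the filtration on $R_S$ is complete, but it is not in general --- that is precisely why one passes to the completion $Q_T(R)$. Your first alternative (noetherian Rees ring $\Rightarrow$ zariskian) misstates the criterion: zariskian requires in addition that $F_{-1}R_S\subseteq J(F_0R_S)$, and this does not follow from noetherianity of $\widetilde{R_S}$ alone. The paper simply cites \cite[Proposition~2.8]{Li}. If you want to prove it by hand, the missing condition is easy here: if $rs^{-1}\in F_{-1}R_S$ then $\deg r<\deg s$, so $\gr(s+r)=\gr s\in T$, whence $s+r\in S$ and $1+rs^{-1}=(s+r)s^{-1}$ is a unit in $R_S$.
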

\begin{proof} Before we begin the proof, let us observe that if $x\in R$
is nonzero and $y\in S$ then $(\gr x)(\gr y) \neq 0$ (because $\gr x \neq 0$
and $\gr y \in T$ is regular) and hence $\deg(xy) = \deg x + \deg y$.

Let $L_n = \{rs^{-1} : r\in R, s\in S$ and $\deg r - \deg s \leq n\}$.
Decoding the definition of $F_nR_S$, we see that $F_nR_S$ is in fact
the additive subgroup of $R_S$ generated by $L_n$. It will therefore
be sufficient to show that $L_n$ is closed under addition.

So let $r_1s_1^{-1}$ and $r_2s_2^{-1}$ be elements of $L_n$ for
some $r_i\in R$ and $s_i\in S$. We can find $u_1\in S$ and $u_2\in R$
such that $s_1u_1 = s_2u_2 = s$ say; then
$r_1s_1^{-1} + r_2s_2^{-1} = (r_1u_1 + r_2u_2)s^{-1}$.
Since $s_1,s_2\in S$, we have $\deg s = \deg s_1 + \deg u_1 = \deg s_2 + \deg u_2$
by the first paragraph. Now
\[\begin{array}{ll}\deg(r_1u_1+r_2u_2) - \deg s &\leq \max\{\deg r_1 +
\deg u_1,\deg r_2 + \deg u_2 \} - \deg s = \\ &= \max \{\deg r_1 - \deg s_1,
\deg r_2 - \deg s_2\} \leq n
\end{array}\]
so $(r_1u_1+r_2u_2)s^{-1} \in L_n$, as required.

The last assertion follows from \cite[Proposition 2.8]{Li}.
\end{proof}
\subsection{Microlocalisation of rings}
\label{MicRings}
\begin{defn} The \emph{microlocalisation} of $R$ at $T$ is the
completion $Q_T(R)$ of $R_S$ with respect to the filtration on
$R_S$ described in $\S$\ref{LiftOre}.
\end{defn}
We record some useful properties enjoyed by microlocalisation.

\begin{prop}
\begin{enumerate}[{(}a{)}]
\item $Q_T(R)$ is a complete filtered ring,
\item $F_nQ_T(R)$ is the closure of $F_nR_S$ in $Q_T(R)$,
\item $R$ embeds into $Q_T(R)$,
\item $Q_T(R)$ is a flat right $R$-module,
\item there are natural isomorphisms
\[\gr Q_T(R) \cong \gr(R_S) \cong (\gr R)_T.\]
\end{enumerate}
\end{prop}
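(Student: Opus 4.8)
The plan is to prove the five statements about the microlocalisation $Q_T(R)$ in the order listed, exploiting the fact that $Q_T(R)$ is by construction the completion of the Ore localisation $R_S$ with respect to a \emph{zariskian} filtration (Lemma in \S\ref{LiftOre}). Parts (a) and (b) are essentially formal: the completion of a separated filtered ring with respect to a given filtration is a complete filtered ring, and the $n$-th filtered piece of the completion is by definition (or by a standard property of completions of filtered abelian groups) the closure of $F_nR_S$ inside it. The only thing to check is that these closures behave well with respect to multiplication, which follows since multiplication on $R_S$ is continuous for the filtration topology. For part (c), I would argue in two steps: $R$ embeds into $R_S$ because $S$ consists of regular elements (already noted in \S\ref{LiftOre}), and $R_S$ embeds into its completion $Q_T(R)$ because the filtration on $R_S$ is separated --- and this separatedness is exactly one of the consequences of the filtration being zariskian (via \cite[Chapter II, \S 2.1]{LV} or \cite[Proposition 2.8]{Li}).

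Part (e) is where the main content lies, and I would do it before (d) since flatness is easiest to deduce once the graded picture is understood. The isomorphism $\gr(R_S) \cong (\gr R)_T$ is the statement that localising and taking the associated graded ring commute here; one direction of this is already visible in the filtration formula $F_nR_S = \{rs^{-1} : \deg r - \deg s \le n\}$ from the Lemma, which shows that $\gr(rs^{-1}) = (\gr r)(\gr s)^{-1}$ up to the usual identifications, and the surjectivity and injectivity of the natural map $(\gr R)_T \to \gr(R_S)$ then follow from that explicit description together with the fact that $T$ is an Ore set of homogeneous regular elements. This is carried out in \cite[Proposition 2.3]{Li} and I would cite it. The isomorphism $\gr Q_T(R) \cong \gr(R_S)$ is the general principle that completing a separated filtered ring does not change the associated graded ring --- true because $F_nQ_T(R)/F_{n-1}Q_T(R)$ is computed from Cauchy sequences modulo lower-order terms, which already stabilise inside $R_S$.

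Finally, for part (d): the filtration on $R_S$ is zariskian and $\gr(R_S) \cong (\gr R)_T$ is noetherian (being a localisation of the noetherian ring $\gr R$), so the completion map $R_S \to Q_T(R)$ is flat on both sides --- this is a standard property of zariskian filtrations, e.g. \cite[Chapter II, \S 2.2]{LV}. Combining this with the flatness of the Ore localisation $R \to R_S$ gives that $Q_T(R)$ is a flat right $R$-module. I expect the main obstacle to be bookkeeping rather than conceptual: one must be careful that the ``completion'' in the definition of $Q_T(R)$ is the separated completion and that all the filtrations in play (on $R_S$, on $Q_T(R)$, and the grading on $\widetilde{R}_{\widetilde T}$ that induces them) are consistently identified, so that the cited results from \cite{Li} and \cite{LV} apply verbatim. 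Once those identifications are pinned down, every part reduces to a quoted property of Ore localisation, zariskian filtrations, or completions.
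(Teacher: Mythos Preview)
Your proposal is correct and follows essentially the same approach as the paper: parts (a) and (b) are taken as formal consequences of the definition of completion, part (c) is the same two-step argument via regularity of $S$ and separatedness of the zariskian filtration on $R_S$, and parts (d) and (e) are reduced to standard cited facts about microlocalisation. The only cosmetic difference is that the paper dispatches (d) and (e) with a direct citation to \cite[Corollary 3.20(1) and Proposition 3.10]{AVV}, whereas you unpack these a bit further, citing \cite{Li} for $\gr(R_S)\cong(\gr R)_T$ and deducing flatness of the completion from the zariskian property via \cite{LV}; the content is the same.
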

\begin{proof} Parts (a) and (b) are clear from the definition.
We have seen in $\S\ref{LiftOre}$ that $R$ embeds into $R_S$,
and the filtration on $R_S$ is separated by Lemma \ref{LiftOre}
and \cite[Chapter II, \S 2.1, Theorem 2]{LV}. Hence $R_S$ embeds
into $Q_T(R)$ and part (c) follows. Parts (d) and (e) follow
from \cite[Corollary 3.20(1) and Proposition 3.10]{AVV}.
\end{proof}

\subsection{Microlocalisation of modules} \label{MicMod}
Let $M$ be a finitely generated right $R$-module. We define
the \emph{microlocalisation} of $M$ at $T$ to be
\[Q_T(M) := M\otimes_RQ_T(R).\]
This is naturally a right $Q_T(R)$-module. Recall that a filtration
on $M$ is said to be \emph{good} if the associated Rees module is
finitely generated over $\widetilde{R}$.

\begin{lem} Let $M$ be a finitely generated $R$-module equipped
with some good filtration, and $N$ be a submodule of $M$. Then
\begin{enumerate}[{(}a{)}]
\item $\gr Q_T(M) \cong (\gr M)_T,$
\item the Ore localisation $M_S$ is a dense $R_S$-submodule of $Q_T(M)$,
\item $Q_T(N)$ can be identified with a $Q_T(R)$-submodule of $Q_T(M)$,
\item the tensor filtration on $Q_T(N)$ coincides with the
subspace filtration induced from $Q_T(M)$,
\item if $L$ is another submodule of $M$, then 
$Q_T(N) \cap Q_T(L) = Q_T(N\cap L).$
\end{enumerate}
\end{lem}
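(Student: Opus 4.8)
The plan is to fix a good filtration on $M$ throughout and to equip every submodule (in particular $N$, $L$ and $N\cap L$) with the subspace filtration and every quotient with the quotient filtration; since $\widetilde{R}$ is noetherian the relevant Rees modules are finitely generated, so these are again good filtrations, and the sequence $0\to N\to M\to M/N\to 0$ becomes strict exact. The first technical point is to identify $Q_T(M)=M\otimes_RQ_T(R)$ with the completion $\widehat{M_S}$ of the Ore localisation $M_S=M\otimes_RR_S$, taken with respect to the filtration induced from the grading on $\widetilde{R}_{\widetilde{T}}$ exactly as in Lemma \ref{LiftOre}. This rests on the module analogues of the results of $\S\ref{MicRings}$, together with the fact that over the Zariskian ring $R_S$ with completion $Q_T(R)$ one has $M_S\otimes_{R_S}Q_T(R)\cong\widehat{M_S}$ because $M_S$ is finitely generated \cite{LV,AVV}; one also checks, as in $\S\ref{MicRings}$(b), that the tensor filtration on $Q_T(M)$ agrees with the completion filtration on $\widehat{M_S}$.

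Granting this, parts (a) and (b) are immediate. For (a): completion with respect to a good (hence Zariskian) filtration does not change the associated graded module, so $\gr Q_T(M)\cong\gr M_S$, and $\gr M_S\cong(\gr M)_T$ because Ore localisation at $T$ commutes with $\gr$ — the module version of $\S\ref{MicRings}$(e), proved exactly as there via \cite{AVV}. For (b): the filtration on $M_S$ is separated (goodness together with the Zariskian property, via \cite[Chapter II, \S 2.1, Theorem 2]{LV}), so $M_S$ embeds as a dense $R_S$-submodule of its completion $Q_T(M)$.

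For (c) and (d) we microlocalise the strict exact sequence $0\to N\to M\to M/N\to 0$. Flatness of $Q_T(R)$ as a right $R$-module ($\S\ref{MicRings}$(d)) yields an exact sequence $0\to Q_T(N)\to Q_T(M)\to Q_T(M/N)\to 0$; the injectivity of the first map, together with the functoriality of $-\otimes_RQ_T(R)$, is statement (c). To obtain (d), apply part (a) to each of $N$, $M$, $M/N$: the associated graded of this sequence of complete filtered modules is the $T$-localisation of $0\to\gr N\to\gr M\to\gr(M/N)\to 0$, which is exact because the original sequence was strict and localisation is exact. A sequence of complete separated filtered modules whose associated graded sequence is exact is itself strict exact \cite{LV}, and strictness of the inclusion $Q_T(N)\hookrightarrow Q_T(M)$ says precisely that the tensor filtration on $Q_T(N)$ coincides with the subspace filtration induced from $Q_T(M)$, which is (d).

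Finally, (e) follows by applying the exact functor $-\otimes_RQ_T(R)$ to the canonical embedding $0\to M/(N\cap L)\to M/N\oplus M/L$; using (c) (equivalently, flatness, which gives $Q_T(M/X)\cong Q_T(M)/Q_T(X)$ for every submodule $X$ of $M$) we obtain an embedding $Q_T(M)/Q_T(N\cap L)\hookrightarrow Q_T(M)/Q_T(N)\oplus Q_T(M)/Q_T(L)$, and hence $Q_T(N\cap L)=Q_T(N)\cap Q_T(L)$ inside $Q_T(M)$. The main obstacle is the first step: establishing the module analogues of $\S\ref{MicRings}$, namely that $Q_T(M)$ really is the filtered completion of $M_S$ and that microlocalisation is compatible with good filtrations and with passing to $\gr$. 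Once that is in place, (b)--(e) are formal consequences of flatness and of the strict/graded-exact dictionary for complete filtered modules, the one delicate point among them being the strictness assertion in (d).
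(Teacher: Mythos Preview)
Your proposal is correct and follows essentially the same route as the paper: parts (a)--(d) in the paper are proved purely by citation to \cite{AVV} and \cite{Li}, and what you have written is a faithful sketch of the arguments behind those references (identifying $Q_T(M)$ with the completion of $M_S$, reading off $\gr$, and using the strict/graded-exact dictionary for complete filtered modules). For (e) the paper simply observes that flatness of $Q_T(R)$ makes the microlocalisation functor preserve pullbacks and hence intersections; your embedding $M/(N\cap L)\hookrightarrow M/N\oplus M/L$ is just an explicit way of saying the same thing.
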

\begin{proof}
We should remark that the filtration on $Q_T(M) = M \otimes_R Q_T(R)$
is the \emph{tensor filtration} in the sense of \cite[Chapter I, \S 6]{LV}.
Part (a) follows from \cite[Proposition 3.10 and Corollary 3.20(2)]{AVV}
and part (b) follows from \cite[Corollary 2.5(3)]{Li}, whereas parts
(c) and (d) follow from \cite[Corollary 3.16(3)]{AVV}.

Finally, $Q_T(R)$ is a flat $R$-module by Proposition \ref{MicRings}(d),
so the microlocalisation functor $M \mapsto M\otimes_RQ_T(R)$ preserves
pullbacks, and in particular, intersections. Part (e) follows.
\end{proof}

\subsection{Constructing normal elements}\label{ConstrNormal}
Let $I$ be a right ideal of $R$. Using Lemma \ref{MicMod}(c),
we can and will identify $Q_T(I)$ with a right ideal of $Q_T(R)$. By
Lemma \ref{MicMod}(d) this identification respects filtrations, and by
Lemma \ref{MicMod}(a) the associated graded ideal $\gr Q_T(I)$ is just
the localised right ideal $(\gr I)_T$ of $\gr Q_T(R) = (\gr R)_T$.

\begin{prop} Let $I$ be a two-sided of $R$ and suppose that there
exists a central regular homogeneous element $X \in \gr R$ such
that the localised ideal $(\gr I)_T$ of $(\gr R)_T$ is generated by $X$:
\[(\gr I)_T = X \cdot (\gr R)_T.\]
Then there exists a \emph{normal} element $w \in Q_T(R)$ such
that $Q_T(I) = w\cdot Q_T(R)$.
\end{prop}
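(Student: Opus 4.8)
The plan is to construct $w$ explicitly as a microlocalised lift of $X$, and then verify that it has the required normality and generation properties using the graded-to-filtered techniques already developed in the paper. First I would pick a filtered lift: since $X \in \gr R$ is homogeneous of some degree, say $d$, and $(\gr I)_T = X\cdot(\gr R)_T$, the element $X$ lies in $(\gr I)_T = \gr Q_T(I)$, so I can choose $w \in F_d Q_T(I)$ with $\gr w = X$ (using that $\gr Q_T(I) = (\gr I)_T$ from the discussion in $\S\ref{ConstrNormal}$). The candidate normal element will be this $w$.

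Next I would prove that $w$ is normal in $Q_T(R)$, i.e. $w\,Q_T(R) = Q_T(R)\,w$. The idea is to transport the analogous statement about $X$ — which is automatic, since $X$ is central in $(\gr R)_T$ — up to the complete filtered ring $Q_T(R)$ by a successive-approximation argument, exactly in the spirit of the proofs in $\S\ref{ProofContNormal}$. Concretely, given any $a \in Q_T(R)$, I want $b \in Q_T(R)$ with $wa = bw$; since $\gr(wa) = X\cdot\gr a = \gr a\cdot X$, I can find $b_0$ with $\gr b_0 = \gr a$ so that $wa - b_0 w$ has strictly smaller degree, and then iterate, using completeness of $Q_T(R)$ (Proposition \ref{MicRings}(a)) and the fact that $X$ is a regular element of $(\gr R)_T$ — hence $\gr w = X$ is regular in $\gr Q_T(R)$, so $w$ is a regular element of $Q_T(R)$ — to ensure the correction terms converge and the limit $b$ genuinely conjugates. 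The regularity of $X$ is what makes each approximation step solvable and the process well-defined; this mirrors how regularity of $\gr y$ was used in Lemma \ref{LiftOre}.

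Then I would show $Q_T(I) = w\cdot Q_T(R)$. The inclusion $w\cdot Q_T(R) \subseteq Q_T(I)$ is clear since $w \in Q_T(I)$ and $Q_T(I)$ is a right ideal. For the reverse inclusion I would again argue by approximation on the filtration: given $v \in F_n Q_T(I)$, its symbol $\gr v$ lies in $(\gr I)_T = X\cdot(\gr R)_T = \gr w\cdot \gr Q_T(R)$, so $\gr v = \gr w\cdot\overline{c}$ for some homogeneous $\overline{c}$; lifting $\overline{c}$ to $c \in Q_T(R)$ of the appropriate degree gives $v - wc \in F_{n-1}Q_T(I)$, and one iterates. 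Completeness of $Q_T(R)$ (and the fact, from Lemma \ref{MicMod}(d), that the filtration on $Q_T(I)$ is the subspace filtration from $Q_T(R)$, so these degree drops are genuine) guarantees that the resulting series $\sum c_i$ converges to an element $c$ with $v = wc$.

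The main obstacle I anticipate is making the two successive-approximation arguments rigorous simultaneously — in particular, being careful that the filtration on $Q_T(I)$ used to measure the degree drops is exactly the one inherited from $Q_T(R)$ (which is precisely what Lemma \ref{MicMod}(d) supplies), and that convergence of the correction series takes place inside $Q_T(I)$, not merely inside $Q_T(R)$. A secondary subtlety is ensuring $w$ can be chosen to be simultaneously a lift of $X$ and an honest element of $F_d Q_T(I)$; this is where one needs $\gr Q_T(I) = (\gr I)_T$ together with the surjectivity of the symbol map, both recorded in $\S\ref{ConstrNormal}$. Once these bookkeeping points are handled, everything else is the routine completeness-and-regularity machinery already used repeatedly in the paper.
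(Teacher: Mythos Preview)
Your construction of $w$ as a lift of $X$ inside $Q_T(I)$ and your argument that $Q_T(I) = w\cdot Q_T(R)$ (your step 3) are both correct and match the paper's proof; the paper simply compresses the iteration into the phrase ``their graded ideals are equal and the filtration is complete''.

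The gap is in your normality argument. After the first correction, the element $wa - b_0 w$ has smaller degree, but nothing forces its symbol to be a multiple of $X$, so the iteration cannot continue. Centrality of $X$ only gives you the first step; regularity of $X$ does not help here either, since it tells you nothing about divisibility of an arbitrary residual. A direct symmetric approximation of $wa = bw$ simply does not close up.

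The paper supplies the missing ingredient, which you do not mention at all: one must first show that $Q_T(I)$ is a \emph{two-sided} ideal of $Q_T(R)$. This follows because $I_S$ is two-sided in $R_S$ by noetherianity of $R$ (\cite[Proposition 2.1.16]{MR}), and $Q_T(I)$ is its closure by Lemma~\ref{MicMod}(b). Combined with your step~3 this gives $Q_T(R)\,w \subseteq w\,Q_T(R)$ automatically, hence an injective ring endomorphism $\sigma$ with $rw = w\sigma(r)$. Now the successive approximation is run on $\sigma$ rather than on the raw equation: since $\gr\sigma(r) = \gr r$ (here centrality and regularity of $X$ are finally used together), one solves $\sigma(r) = s$ degree by degree and concludes that $\sigma$ is surjective, so $w\,Q_T(R) \subseteq Q_T(R)\,w$ as well. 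The point is that each residual $s - \sigma(r_n)$ is manifestly something one is still trying to hit with $\sigma$, whereas in your formulation the residual $wa - b_0 w$ has no such structure.
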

\begin{proof} Choose any $w \in Q_T(I)$ such that $\gr w = X$.
Then the right ideal $w\cdot Q_T(R)$ is contained in $Q_T(I)$
and their graded ideals are equal by assumption. Because the
filtration on $Q_T(R)$ is complete, it follows that $Q_T(I) = w\cdot Q_T(R)$.

The Ore localisation $I_S$ is a \emph{two-sided} ideal of $R_S$
because $R$ is noetherian \cite[Proposition 2.1.16]{MR}. By
Lemma \ref{MicMod}(b), $Q_T(I)$ is the closure of $I_S$ inside
$Q_T(R)$ and is hence a two-sided ideal of $Q_T(R)$.

Since $X = \gr w$ is central and regular in $(\gr R)_T$, and
the filtration on $Q_T(R)$ is complete, the fact that $w$ is
a normal element in $R$ will follow from the following rather
general lemma.
\end{proof}

\begin{lem} Let $R$ be a complete filtered ring and $w\in R$.
Suppose that $wR$ is a two-sided ideal of $R$ and that $\gr w$ is
a central regular element of $\gr R$. Then $w$ is a regular
normal element in $R$.
\end{lem}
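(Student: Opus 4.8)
The plan is to prove the two assertions separately: first that $w$ is regular, then that $w$ is normal.

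\textbf{Regularity.} Suppose $wr = 0$ for some nonzero $r \in R$, and we derive a contradiction. Both $w$ and $r$ are nonzero with well-defined principal symbols $\gr w$ and $\gr r$ in $\gr R$. Since $\gr w$ is regular in $\gr R$, the product $(\gr w)(\gr r)$ is nonzero in $\gr R$, which forces $\deg(wr) = \deg w + \deg r$; in particular $wr \neq 0$, a contradiction. The same argument on the other side (using that $\gr w$ is also central, hence regular as a left non-zero-divisor, or just applying the symmetric statement) shows $rw \neq 0$ for nonzero $r$. So $w$ is regular. This step is routine given the earlier observation in \S\ref{FiltBasics} that $\gr(xy) = \gr(x)\gr(y)$ when the relevant symbols multiply to something nonzero.

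\textbf{Normality.} We must show $wR = Rw$. Since $wR$ is assumed to be a two-sided ideal, $Rw \subseteq RwR = wR$, so it suffices to prove $wR \subseteq Rw$, i.e. that for each $r \in R$ there is $r' \in R$ with $wr = r'w$, or equivalently $w r w^{-1} \in R$ inside the localisation $R_S$ (once we know $w$ is invertible there) --- but cleaner is to argue directly by successive approximation using completeness. Given $r \in R$, we construct elements $r_k \in R$ such that $wr \equiv r_k w \pmod{F_{\deg w + \deg r - k}R}$, improving the approximation by one filtration degree at each stage. For the base step: $wr \in wR$ is a two-sided ideal, so $wr = \sum_i a_i w b_i$; comparing top symbols and using that $\gr w$ is central in $\gr R$, we get $\gr(wr) = (\gr w)(\gr r) = (\gr r)(\gr w)$, which is the principal symbol of $rw$; hence $wr - rw \in F_{\deg w + \deg r - 1}R$, giving $r_0 = r$. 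For the inductive step, suppose $wr - r_k w \in F_m R$ where $m = \deg w + \deg r - k - 1 < \deg w + \deg r$. Now $wr - r_k w$ lies in the two-sided ideal $wR$ as well (since $wr \in wR$ and $r_k w \in Rw \subseteq wR$), so write $wr - r_k w = w s$ for some $s \in R$ with $\deg(ws) \leq m$; since $\deg w$ is fixed and $\gr w$ is regular, $\deg s \leq m - \deg w = \deg r - k - 1$. Then $w(r - r_{k+1}') = w s - w s = $ \dots more precisely, set $r_{k+1} = r_k + s$ wait --- we want to absorb $ws$ into $r_{k+1} w$; using $w s - s w \in F_{m-1}R$ (same centrality-of-$\gr w$ argument applied to $s$), we get $wr - (r_k + s)w = (wr - r_k w) - sw = ws - sw \in F_{m-1}R$, so $r_{k+1} := r_k + s$ works. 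The $s$'s lie in filtration degrees tending to $-\infty$, so by completeness of $R$ the sequence $r_k$ converges to some $r' \in R$ with $wr = r'w$. Hence $wR \subseteq Rw$ and so $wR = Rw$.

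\textbf{Main obstacle.} The delicate point is the inductive step: ensuring at each stage that the ``error term'' $wr - r_k w$ lies in the two-sided ideal $wR$ so that it can be written as $ws$ with controlled degree, and simultaneously that the degree genuinely drops. Both rely essentially on $\gr w$ being \emph{central} (so that $ws$ and $sw$ have the same principal symbol whenever that symbol is nonzero) and \emph{regular} (so that factoring out $w$ does not lose degree control). The completeness of the filtration then does the rest. One must also be slightly careful with the degenerate cases where some product of symbols vanishes, i.e. where an error term is actually zero or jumps more than one degree --- these only help and can be folded into the induction by allowing $F_m R$ with $m$ arbitrarily negative (interpreting $F_{-\infty}R = 0$).
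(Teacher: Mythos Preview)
Your proof is correct and follows essentially the same strategy as the paper: regularity of $w$ from regularity of $\gr w$, then normality by successive approximation using centrality of $\gr w$ and completeness of $R$. The paper packages the second part slightly more cleanly by observing that $Rw \subseteq wR$ together with regularity of $w$ yields a well-defined injective ring endomorphism $\sigma$ of $R$ satisfying $rw = w\sigma(r)$; centrality of $\gr w$ then gives $\gr\sigma(r) = \gr r$ for all nonzero $r$, and surjectivity of a symbol-preserving endomorphism of a complete filtered ring follows immediately by the same telescoping construction you carry out. This framing spares you from re-verifying at each inductive step that the error term lies in $wR$, but the content is identical.
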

\begin{proof} Because $\gr w$ is a regular element of $\gr R$, $w$
must be a regular element of $R$. Since $Rw \subseteq wR$, for
every $r\in R$ there exists $\sigma(r)\in R$ such that $rw = w\sigma(r)$.
Since $w$ is regular, $r \mapsto \sigma(r)$ is an injective ring
endomorphism of $R$. We will show that $\sigma$ is surjective,
which will complete the proof.

Let $r \in R$ be nonzero, so that $\sigma(r)$ is nonzero.
Since $\gr w$ is central and regular,
\[\gr r \gr w = \gr(rw) = \gr(w\sigma(r)) = \gr w \gr \sigma(r) = \gr \sigma(r) \gr w\]
and therefore $\gr \sigma(r) = \gr r$ for any nonzero $r\in R$.
Now let $s \in R$ be a nonzero element of degree $n$. Set $r_n := s$, so that
\[s \equiv \sigma(r_n) \mod F_{n-1}R.\]
Set $r_{n-1} = s - \sigma(r_n) \in F_{n-1}R$, so that
\[s - \sigma(r_n) \equiv \sigma(r_{n-1}) \mod F_{n-2}R.\]
Continuing this process, we can construct a sequence of elements
$r_n, r_{n-1}, r_{n-2},\ldots $ of $R$ such
that $r_i \in F_iR$ for all $i\leq n$. Because $R$ is complete,
the infinite sum $\sum_{k=0}^\infty r_{n-k}$ converges to an
element $r$ of $R$ and $\sigma(r) = s$ by construction.
The result follows.
\end{proof}

\section{A control theorem for reflexive ideals}
In this section we state and prove our main result.
\subsection{Microlocalisation of Frobenius pairs}
\label{MicFrob}
Let $(A,A_1)$ be a Frobenius pair. Because $B = \gr A$ is noetherian
and the filtration on $A$ is complete, the remarks made in
$\S\ref{MicLocBasic}$ show that we may apply the theory
developed in $\S\ref{MicroLoc}$.

If $Z$ is a nonzero homogeneous element of $B$, then $T :=
\{1,Z,Z^2,\ldots\}$ is an Ore set in $B$ consisting of regular
homogeneous elements, since $B$ is a commutative domain by
assumption. By abuse of notation, we will denote the corresponding
microlocalisation $Q_T(A)$ by $Q_Z(A)$.

It turns out that Frobenius pairs are stable under microlocalisation.

\begin{prop}
Let $(A,A_1)$ be a Frobenius pair and $Z \in B$ be a nonzero
homogeneous element. 
\begin{enumerate}[{(}a{)}]
\item 
Then $(Q_Z(A), Q_{Z^p}(A_1))$ is also a Frobenius pair.
\item 
If $\mathbf{a}$ is a source of derivations for $(A,A_1)$, then it is
also a source of derivations for $(Q_Z(A), Q_{Z^p}(A_1))$.
\item 
Suppose $B$ is a UFD. If $(A,A_1)$ satisfies the derivation hypothesis,
then so does $(Q_Z(A), Q_{Z^p}(A_1))$.
\end{enumerate}
\end{prop}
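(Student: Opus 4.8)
The plan is to establish the three parts in order, leaning on the microlocalisation machinery of \S\ref{MicroLoc} to transport all the structural data of the Frobenius pair $(A,A_1)$ through the functor $Q_Z(-)$. For part (a), I would verify the four axioms of Definition \ref{FrobPair} for the pair $(Q_Z(A),Q_{Z^p}(A_1))$. Write $C := \gr Q_Z(A)$; by Proposition \ref{MicRings}(e) this is $B_T = B[Z^{-1}]$ where $T = \{1,Z,Z^2,\ldots\}$, and $C$ is again a commutative noetherian domain, giving axiom (ii). For axiom (iii), the image $C_1$ of $\gr Q_{Z^p}(A_1)$ in $C$ should be identified with $B_1[Z^{-p}]$; since $Z^p \in B^{[p]} \subseteq B_1$, inverting $Z^{p}$ makes sense inside $B_1$, and we get $C^{[p]} = (B[Z^{-1}])^{[p]} = B^{[p]}[Z^{-p}] \subseteq B_1[Z^{-p}] = C_1$. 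For axiom (iv), localisation is exact, so applying $B_T \otimes_B -$ to the decomposition $B = \bigoplus_{\alpha \in [p-1]^t} B_1 \mathbf{y}^\alpha$ yields $C = \bigoplus_{\alpha \in [p-1]^t} C_1 \mathbf{y}^\alpha$ with the same homogeneous elements $y_1,\ldots,y_t$ (now regarded in $C$). Axiom (i), that $Q_{Z^p}(A_1)$ is closed in $Q_Z(A)$, should follow because $Q_{Z^p}(A_1)$ is by definition complete in its own filtration and that filtration agrees with the subspace filtration from $Q_Z(A)$ — here I would invoke Lemma \ref{MicMod}(d) applied to the submodule $A_1 \subseteq A$ (taking $N = A_1$, $M = A$), noting that $A_1$ inherits a good filtration since $A$ is a free $A_1$-module of finite rank by Lemma \ref{FreeMod}, and that $Q_{Z^p}(A_1) = A_1 \otimes_{A_1} Q_{Z^p}(A_1)$ sits inside $Q_Z(A)$ compatibly. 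The one point needing care is matching up the Ore sets: the image of $A_1$ under $Q_Z(-)$ is the closure of $(A_1)_S$ where $S = \{r \in A : \gr r \in T\}$, and one must check $(A_1)_{S \cap A_1}$ has closure equal to $Q_{Z^p}(A_1)$, i.e. that inverting the $Z$-powers in $A$ restricts to inverting the $Z^p$-powers in $A_1$ — this is where the Frobenius relation $Z^p \in B_1$ is used again.

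For part (b), let $\mathbf{a} = \{a_0, a_1, \ldots\}$ with functions $\theta, \theta_1$ be a source of derivations for $(A,A_1)$. I would show the same set $\mathbf{a}$, viewed inside $Q_Z(A)$ via the embedding of Proposition \ref{MicRings}(c), is a source of derivations for $(Q_Z(A), Q_{Z^p}(A_1))$ using the \emph{same} functions $\theta$ and $\theta_1$; then condition (iii) of Definition of source of derivations is automatic. The content is conditions (i) and (ii): I need $[a_r, F_k Q_Z(A)] \subseteq F_{k - \theta(a_r)} Q_Z(A)$ and $[a_r, F_k Q_{Z^p}(A_1)] \subseteq F_{k - \theta_1(a_r)} Q_Z(A)$. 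The strategy is to verify the bracket inclusion first on the dense subring $A_S \subseteq Q_Z(A)$ and then pass to the completion by continuity. On $A_S$, a typical element is $a s^{-1}$ with $a \in A$, $s \in S$, $\deg a - \deg s \leq k$; then $[a_r, as^{-1}] = [a_r,a]s^{-1} + a[a_r,s^{-1}]$ and $[a_r, s^{-1}] = -s^{-1}[a_r,s]s^{-1}$, so both terms pick up a factor from $[a_r, A] \subseteq F_{\bullet - \theta(a_r)}A$ and land in the correct filtered piece by the degree formula of Lemma \ref{LiftOre} (using that $\gr s \in T$ is central, hence $\deg(xsy) = \deg x + \deg s + \deg y$). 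Since $a_r$ acts continuously (the bracket map is filtered of degree $-\theta(a_r)$ on the dense subring, hence extends), the inclusion persists in $Q_Z(A)$. The same argument with $A_1$ in place of $A$ and $\theta_1$ in place of $\theta$ handles condition (ii), once one knows the relevant elements $s$ can be taken in $S \cap A_1$ — again relying on the Ore-set compatibility established in part (a).

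Part (c) is the substantive one and I expect it to be the main obstacle. We are given that $B$ is a UFD and that $(A,A_1)$ satisfies the derivation hypothesis; we must show $(Q_Z(A), Q_{Z^p}(A_1))$ does too. Let $X \in C$ be homogeneous and suppose a homogeneous $Y \in C$ lies in the $\mathbf{a}$-closure of $XC$ for every source of derivations $\mathbf{a}$ of $(Q_Z(A), Q_{Z^p}(A_1))$; we must deduce $\mathcal{D}_C(Y) \subseteq XC$, where $\mathcal{D}_C = \Der_{C_1}(C)$. The idea is to descend to $B$: since $C = B[Z^{-1}]$, after multiplying $X$ and $Y$ by suitable powers of $Z$ (which are units in $C$, so change neither $XC$ nor the hypothesis) we may assume $X, Y \in B$. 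Now I would like to invoke the hypothesis on $(A,A_1)$: if I can show $Y$ lies in the $\mathbf{a}'$-closure of $XB$ for every source of derivations $\mathbf{a}'$ of $(A,A_1)$, then $\mathcal{D}_B(Y) \subseteq XB$, and localising gives $\mathcal{D}_C(Y) = \mathcal{D}_B(Y)_T \subseteq (XB)_T = XC$ (using part (b)'s converse direction to relate $\mathcal{D}_B$ and $\mathcal{D}_C$ — every $C_1$-linear derivation of $C$ restricts, after clearing denominators, from a $B_1$-linear derivation of $B$, by Proposition \ref{Derivations}(b) applied in both rings, since the $\partial_j$ are the same and $C = \bigoplus C_1 \mathbf{y}^\alpha$). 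The crux, then, is the implication: \emph{$Y$ in the $\mathbf{a}$-closure of $XC$ for all sources $\mathbf{a}$ over the microlocalised pair $\Rightarrow$ $Y$ in the $\mathbf{a}'$-closure of $XB$ for all sources $\mathbf{a}'$ over $(A,A_1)$}. By part (b), every source $\mathbf{a}'$ over $(A,A_1)$ is also a source over the microlocalised pair, so by hypothesis $\{a'_r, Y\}_{\theta(a'_r)} \in XC$ for all $r \gg 0$. We need $\{a'_r, Y\}_{\theta(a'_r)} \in XB$, not just $XC$ — i.e.\ we need to control denominators. Here I would use that the induced derivations $\{a'_r, -\}_{\theta(a'_r)}$ are derivations of $B$ (not merely $C$), so $\{a'_r, Y\}_{\theta(a'_r)} \in B$ already; and $B \cap XC = B \cap XB[Z^{-1}]$. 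Since $B$ is a UFD and $Z$ is homogeneous, $B \cap XB[Z^{-1}] = (X : Z^\infty)$, the saturation of $XB$ at $Z$; writing $X = Z^m X'$ with $Z \nmid X'$, this saturation is $X' B$. Thus the argument goes through provided $X$ has been normalised so that $Z \nmid X$ in $B$ — which we are free to arrange at the outset by absorbing the maximal $Z$-power into the unit. So the real work of part (c) is: (1) the normalisation reductions using that $Z$ is a unit in $C$; (2) the UFD saturation identity $B \cap XB[Z^{-1}] = XB$ when $Z \nmid X$; and (3) carefully tracking that "$\mathbf{a}$-closure" computed over the two pairs agrees on elements of $B$, which rests on part (b) plus the observation that the induced graded derivations $\{a_r, -\}_{\theta(a_r)}$ literally preserve $B \subseteq C$. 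I anticipate the bookkeeping around which sources of derivations exist over the microlocalised pair — in particular whether \emph{every} such source restricts to one over $(A,A_1)$, or only the converse holds — to be the delicate point; fortunately for the implication we need, only the (easier) converse direction from part (b) is required, so I expect the hypothesis "$B$ is a UFD" to do exactly the job of pinning down the denominators.
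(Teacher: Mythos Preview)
Your proposal is essentially correct and follows the same architecture as the paper's proof: verify the Frobenius pair axioms on the localised graded rings for (a), push commutator estimates through the Ore localisation and then to the completion by continuity for (b), and for (c) clear denominators to reduce the $\mathbf{a}$-closure statement in $B_Z$ to one in $B$, invoke the derivation hypothesis for $(A,A_1)$, and localise back. Two points in part (c) deserve tightening, and there is one stylistic difference worth noting.

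First, when you multiply $Y$ by a power of $Z$ you assert this ``change[s] neither \ldots\ the hypothesis''. This is not true for an arbitrary power: $\{a_r, Z^kY\}_{\theta(a_r)} = \{a_r,Z^k\}_{\theta(a_r)}\,Y + Z^k\{a_r,Y\}_{\theta(a_r)}$, and the first term need not lie in $XC$. You must take the multiplier to be a $p$-th power, say $Z^{pm}$, so that $Z^{pm}\in B^{[p]}\subseteq B_1\subseteq C_1$; then for $r$ large enough that $\{a_r,-\}_{\theta(a_r)}$ is $C_1$-linear, the first term vanishes. The paper does exactly this (using $Z^{p^n}$), and the same device is needed again at the end when you undo the normalisation of $Y$ to pass from $\mathcal{D}_C(Z^{pm}Y)\subseteq XC$ to $\mathcal{D}_C(Y)\subseteq XC$.

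Second, the condition ``$Z\nmid X$'' is not strong enough to guarantee $B\cap XB[Z^{-1}]=XB$: if $Z$ is not irreducible you need $X$ coprime to $Z$ (no common prime factor), which is the correct normalisation. The paper sidesteps this by not normalising $X$ at all; instead it observes that $XB_Z\cap B$ is a reflexive ideal of $B$ (Proposition~\ref{ReflIdeals}(b), using flatness of $B_Z$ over $B$) and hence principal, say $X'B$, by Lemma~\ref{UFD}. Your more elementary saturation argument also works once phrased with coprimality, and is arguably more transparent; the paper's route has the advantage of making the role of the UFD hypothesis structurally explicit.

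For part (a), invoking Lemma~\ref{MicMod}(d) with $N=A_1$, $M=A$ is a reasonable idea but requires care about which base ring $R$ you microlocalise over and why the resulting module $A\otimes_{A_1}Q_{Z^p}(A_1)$ coincides with $Q_Z(A)$. The paper avoids this by a direct check that the subspace filtration on $(A_1)_{S_1}$ from $A_S$ agrees with its intrinsic filtration (using Lemma~\ref{LiftOre}), then passing to completions; you may find that cleaner.
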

\begin{proof} (a) By Proposition \ref{MicRings}(c), we can identify
$A$ with its image in $Q_Z(A)$. We will also identify $\gr A_1$ with
its image $B_1$ in $B$. By Definition \ref{FrobPair}(iii), $Z^p$ lies
in $B_1$ so the microlocalisation $Q_{Z^p}(A_1)$ makes sense.

Let $T$ and $T_1$ denote the multiplicatively closed sets in $B$ and
$B_1$ generated by $Z$ and $Z^p$ and let $S$ and $S_1$ be the
corresponding right Ore sets in $A$ and $A_1$. Clearly $S_1 \subseteq S$,
so the Ore localisation $(A_1)_{S_1}$ naturally embeds into $A_S$.
Moreover, using Lemma \ref{LiftOre} we see that
\[ (F_nA_S) \cap (A_1)_{S_1} = F_n(A_1)_{S_1}\]
for all $n\in\mathbb{Z}$, which means that the filtration on $(A_1)_{S_1}$
induced from $A_1$ coincides with the subspace filtration induced from $A_S$.
Passing to completions we see that $Q_{Z^p}(A_1)$ can be identified
with a closed subalgebra of $Q_Z(A)$. Moreover, one can easily check that
\[ (F_nQ_ZA) \cap Q_{Z^p}(A_1) = F_nQ_{Z^p}(A_1)\]
for all $n$. Hence Definition \ref{FrobPair}(i) is satisfied
for the new pair $(Q_Z(A), Q_{Z^p}(A_1))$.

Next, $\gr Q_Z(A) \cong B_Z$ and $\gr Q_{Z^p}(A_1) \cong
(B_1)_{Z^p}$ by Proposition \ref{MicRings}(e). Since $B_Z$ is a
commutative noetherian domain and $(B_Z)^{[p]} = B^{[p]}_{Z^p}
\subseteq (B_1)_{Z^p}$, Definitions \ref{FrobPair}(ii, iii) are
satisfied.

Finally, $B_Z \cong B_{Z^p}$ because $Z$ is becomes a unit
when $Z^p$ gets inverted. Hence
\[B_Z = B_{Z^p} = \bigoplus_{\alpha\in\mathbb{N}_t^p} (B_1)_{Z^p}
\mathbf{y}^\alpha = \bigoplus_{\alpha\in\mathbb{N}_t^p} (B_Z)_1
\mathbf{y}^\alpha,\]
which shows that Definition \ref{FrobPair}(iv) is inherited by $B_Z$.

(b) Let $a\in A$ and let the integers $k,n$ be such that $[a,F_kA]
\subseteq F_{k-n}A$. For any $y \in A$ and $s \in S$ we have
\[[a,ys^{-1}] = [a,y]s^{-1} - ys^{-1}[a,s]s^{-1},\]
which together with Lemma \ref{LiftOre} implies that
\[[a,F_kA_S] \subseteq F_{k-n}A_S.\]
Now $F_kQ_Z(A)$ is the closure of $F_kA_S$ in $Q_Z(A)$ by
Proposition \ref{MicRings}(b) and the bracket operation $[a,-]$ is continuous, so
\[[a, F_kQ_Z(A)] \subseteq F_{k-n}Q_Z(A).\]
A similar argument shows that if $[a,F_kA_1] \subseteq F_{k-n}A$,
then
\[[a, F_kQ_{Z^p}(A_1)] \subseteq F_{k-n}Q_Z(A).\]
Part (b) follows.

(c) Let $X, Y$ be homogeneous elements of $B_Z$ and suppose
that $Y$ lies in the $\mathbf{a}$-closure of $XB_Z$ for all
$\mathbf{a} \in \mathcal{S}(Q_Z(A),Q_{Z^p}(A_1))$.

Let $\mathbf{a}$ be a source of derivations for $(A,A_1)$. Note that
the derivation $D_r$ of $\gr Q_Z(A) \cong B_Z$ induced by the
element $a_r \in Q_Z(A)$ coincides with the extension to $B_Z$ of
the derivation $\{a_r,- \}_{\theta(a_r)}$ of $B$ induced by $a_r \in
A$.

Because $Y$ lies in the $\mathbf{a}$-closure of $XB_Z$,
$D_r(Y) \in XB_Z$ for all $r \gg 0$. We can find an integer
$n$ such that $Z^{p^n}Y \in B$. Then
\[D_r(Z^{p^n}Y) \in XB_Z \cap B\]
for all $r \gg 0$. Since $B_Z$ is a flat $B$-module, $XB_Z \cap B$
is a reflexive ideal of $B$ by Proposition \ref{ReflIdeals}(b).
Since $B$ is a UFD, Lemma \ref{UFD} implies that $XB_Z \cap B = X'B$
for some homogeneous element $X' \in B$. Hence
\[D_r(Z^{p^n}Y) \in X'B\] for all $r \gg 0$ and therefore
$Z^{p^n}Y$ lies in the $\mathbf{a}$-closure of $X'B$ for
any source of derivations $\mathbf{a}$ for $(A,A_1)$.
Because $(A,A_1)$ satisfies the derivation hypothesis, it follows that
$\mathcal{D}(Z^{p^n}Y) \subseteq X'B$. By Proposition
\ref{Derivations}(b), the localised $B_Z$-module $\mathcal{D}_Z$
can be identified with the set of all $(B_1)_{Z^p}$-linear
derivations of $B_Z$. But $\mathcal{D}_Z(Y) \subseteq XB_Z$
and part (c) follows.
\end{proof}

\subsection{Applying Theorem \ref{ContNormal}}\label{IoverJ}
We can now use the Control Theorem for normal elements to deduce
some information about arbitrary two-sided ideals. Recall the
definition of pseudo-null modules from $\S\ref{PseudoNull}$.

\begin{thm}
Let $(A,A_1)$ be a Frobenius pair satisfying the 
derivation hypothesis, such that $B$
and $B_1$ are UFDs. Let $I$ be a two-sided ideal of $A$ and $J =
(I\cap A)\cdot A_1$. Then $\gr I/ \gr J$ is pseudo-null.
\end{thm}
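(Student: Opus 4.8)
The goal is to show that $\gr I / \gr J$ is pseudo-null, where $J = (I\cap A_1)\cdot A$. First note the typo: by the notation of the paper $J$ should be $(I\cap A_1)\cdot A$, not $(I\cap A)\cdot A_1$, so that $J\subseteq I$ and $\gr J\subseteq \gr I$. The plan is to reduce the statement to the Control Theorem for normal elements (Theorem~\ref{ContNormal}) after passing to a suitable microlocalisation, using Proposition~\ref{MicFrob} to transport the Frobenius-pair structure, the source of derivations, and the derivation hypothesis along the microlocalisation $A\to Q_Z(A)$.

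First I would reduce to the case where $\gr I$ is reflexive. Since $B$ is a commutative noetherian UFD, the reflexive closure $\overline{\gr I}$ equals $XB$ for a single homogeneous element $X\in B$ by Lemma~\ref{UFD}, and $XB/\gr I$ is pseudo-null. So by Lemma~\ref{PseudoNull} it suffices to show that $XB/\gr J$ is pseudo-null, i.e. I may assume from the start that $\gr I = XB$ is principal, generated by a homogeneous element $X$. Now set $Z := X$ (or, if $X$ is a unit, note the statement is trivial since then $\gr I = B$ forces $I = A$, hence $I\cap A_1 = A_1$ and $\gr J = B$ as well), and form the microlocalisation $Q_Z(A)$ at the powers of $Z=X$. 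By Proposition~\ref{MicFrob}(a), $(Q_Z(A), Q_{Z^p}(A_1))$ is again a Frobenius pair; by parts (b) and (c) it still satisfies the derivation hypothesis; and $\gr Q_Z(A) \cong B_X$, which is still a UFD (localisation of a UFD), with $\gr Q_{Z^p}(A_1)\cong (B_1)_{X^p}$ a UFD as well.

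Next, in the microlocalised picture the ideal $(\gr I)_X = X\cdot B_X = B_X$ becomes the whole ring, so by Proposition~\ref{ConstrNormal} — more precisely the construction preceding it, applied with the central regular homogeneous element $X$ — there is a normal element $w\in Q_Z(A)$ with $Q_Z(I) = w\cdot Q_Z(A)$; concretely one picks $w\in Q_Z(I)$ with $\gr w = X$. Now apply Theorem~\ref{ContNormal} to the Frobenius pair $(Q_Z(A), Q_{Z^p}(A_1))$ and the normal element $w$: since $(B_1)_{X^p}$ is a UFD, the theorem gives
\[
w\cdot Q_Z(A) = \bigl(w\cdot Q_Z(A)\cap Q_{Z^p}(A_1)\bigr)\cdot Q_Z(A),
\]
i.e. $Q_Z(I)$ is controlled by $Q_{Z^p}(A_1)$. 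Passing to associated graded rings and using $\gr Q_Z(I) = (\gr I)_X$ together with the compatibility of $\gr$ with the subspace filtration (Lemma~\ref{MicMod}(d)) and with intersections (Lemma~\ref{MicMod}(e)), one obtains $(\gr I)_X = \bigl((\gr I)_X \cap (B_1)_{X^p}\bigr)\cdot B_X$, which after identifying $(\gr I)_X\cap (B_1)_{X^p}$ with $(\gr I\cap B_1)_{X^p}$ reads $(\gr I)_X = (\gr J)_X$, where $\gr J = (\gr I\cap B_1)B$. Thus $(\gr I/\gr J)_X = 0$, so $\gr I/\gr J$ is killed by some power of $X$, i.e. $X^k \in \Ann_B(\gr I/\gr J)$.

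Finally I would conclude pseudo-nullity from Proposition~\ref{PseudoNull}(b): $\gr I/\gr J$ is a finitely generated module over the commutative noetherian domain $B$ whose annihilator contains a nonzero element $X^k$, hence $\Ann_B(\gr I/\gr J)^{-1} = B$ (a fractional ideal containing a nonzero element of the ring has reflexive closure $B$ in a UFD—or directly, if $q\cdot X^k\in B$ then... actually one argues: $\Ann^{-1}\subseteq (X^k B)^{-1} = X^{-k}B$, and since $X^k\Ann^{-1}\subseteq B$... ), and therefore $\gr I/\gr J$ is pseudo-null. The main obstacle I expect is bookkeeping rather than conceptual: carefully matching up $\gr Q_Z(I)$, $(\gr I)_X$, and the graded pieces of the controlled decomposition, and in particular verifying that $Q_Z(I)\cap Q_{Z^p}(A_1)$ has associated graded equal to $(\gr I \cap B_1)_{X^p}$ — this needs the "intersection" compatibility in Lemma~\ref{MicMod}(e) applied to submodules of $A$, plus the fact from the proof of Proposition~\ref{MicFrob}(a) that the subspace filtrations match up. The reduction to principal $\gr I$ and the passage between the $X$-microlocalisation and the $X^p$-microlocalisation of the subalgebra (harmless since $X$ becomes a unit once $X^p$ is inverted, as noted in the proof of Proposition~\ref{MicFrob}(a)) are the other points requiring a little care.
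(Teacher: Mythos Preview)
Your overall architecture (microlocalise, invoke Proposition~\ref{ConstrNormal} to get a normal generator, apply Theorem~\ref{ContNormal}, pull back) is exactly right, but the specific choice $Z:=X$ collapses the argument.

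First, the reduction ``assume $\gr I = XB$'' is not legitimate: $J=(I\cap A_1)A$ depends on $I$ as an ideal of $A$, not merely on $\gr I$, so you cannot simply replace $\gr I$ by its reflexive closure while keeping $J$ fixed. Second, and more seriously, even if one grants $\gr I = XB$, then after inverting $X$ one has $(\gr I)_X = XB_X = B_X$, the whole ring; hence $Q_X(I)=Q_X(A)$, the normal element $w$ is a unit, and Theorem~\ref{ContNormal} yields the vacuous statement $Q_X(A)=Q_{X^p}(A_1)\cdot Q_X(A)$. So no information about $\gr J$ is obtained from the microlocalisation at $X$. Third, your final step is wrong as stated: a single nonzero element $X^k$ in $\Ann_B(\gr I/\gr J)$ does \emph{not} force $\Ann^{-1}=B$ (take $\Ann=X^kB$ in a polynomial ring), so pseudo-nullity cannot be concluded this way.

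The paper's fix is to let $Z$ range over all nonzero homogeneous elements of $X^{-1}\gr I$, i.e.\ those $Z$ with $ZX\in\gr I$. For such $Z$ one has $(\gr I)_Z = XB_Z$ with $X$ typically \emph{not} a unit in $B_Z$, so Proposition~\ref{ConstrNormal} genuinely produces a nontrivial normal generator, and Theorem~\ref{ContNormal} gives $Q_Z(I)=Q_Z(J)$, whence $(\gr I)_Z=(\gr J)_Z$ and some $Z^n$ annihilates $\gr I/\gr J$. Running over all such $Z$ shows that a power of the ideal $X^{-1}\gr I$ annihilates $\gr I/\gr J$; since $B/(X^{-1}\gr I)\cong XB/\gr I$ is pseudo-null by Lemma~\ref{UFD}, so is $\gr I/\gr J$. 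One further point: the identification $Q_Z(I)\cap Q_{Z^p}(A_1)=Q_{Z^p}(I\cap A_1)$ should be established at the level of ideals of $A'$ (via Lemma~\ref{MicMod}(e) and $A'=A\cdot A_1'$), not by first passing to associated graded as you propose, since $\gr(I\cap A_1)$ need not equal $\gr I\cap B_1$.
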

\begin{proof}
The right ideal $J$ is clearly contained in $I$, and we have
the following chain of inclusions of graded ideals in $B$:
\[\gr J \subseteq \gr I \subseteq \overline{\gr I} \subseteq \gr R,\]
where $\overline{\gr I}$ denotes the reflexive closure of $\gr I$
in $B$ defined in $\S \ref{ReflIdeals}$. Since $B$ is a UFD,
$\overline{\gr I} = XB$ for some homogeneous element
$X \in B$ by Lemma \ref{UFD}.

Let $Z$ be a nonzero homogeneous element of $B$ such that
$ZX \in \gr I$, and consider the microlocalisations
$A' := Q_Z(A)$ and $A_1' := Q_{Z^p}(A_1)$. By construction,
$(\gr I)_Z = X\cdot B_Z$, so the two-sided ideal $I' := Q_Z(I)$
of $A'$ is generated by a normal element $w$ of $A'$ by
Proposition \ref{ConstrNormal}. Because the Frobenius pair
$(A',A_1')$ satisfies the derivation hypothesis 
by Proposition \ref{MicFrob}(c),
the ideal $I' = wA'$ is controlled by $A_1'$ by Theorem \ref{ContNormal}:
\[I' = (I' \cap A_1')\cdot A'.\]
By Lemma \ref{FreeMod}, $A = \bigoplus_{\alpha\in [p-1]^t} \mathbf{u}^\alpha A_1$
and $A' = \bigoplus_{\alpha\in [p-1]^t} \mathbf{u}^\alpha A_1'$;
note that the \emph{same} generators occur in both expressions. Hence
\[A' = A\cdot A_1'\quad\mbox{and}\quad I' = I\cdot A' = I\cdot A\cdot A_1' = I\cdot A_1'.\]
Because $A$ is a finitely generated $A_1$-module,
Lemma \ref{MicMod}(e) implies that
\[Q_{Z^p}(I) \cap Q_{Z^p}(A_1) = Q_{Z^p}(I\cap A_1)\]
or equivalently, $(I \cdot A_1') \cap A_1' = (I\cap A_1)\cdot A_1'$. Hence
\[I' = (I' \cap A_1')\cdot A' = (I\cap A_1)\cdot A_1' \cdot A' = (I\cap A_1)A\cdot A'\]
and hence $I\cdot A' = J\cdot A'$. Passing to the graded ideals
and using Lemma \ref{MicMod}(a), we obtain $(\gr I)_Z = (\gr J)_Z$,
which means that $Z^n \gr I \subseteq \gr J$ for some integer $n$.

This holds for any $Z\in X^{-1}\gr I$, a finitely generated ideal in $B$. Hence
\[(X^{-1}\gr I)^m \subseteq \Ann_B(\gr I / \gr J)\]
for some integer $m$. But $B / X^{-1}\gr I \cong XB/I$ is
pseudo-null by Lemma \ref{UFD}, so $C:=B / (X^{-1}\gr I)^m$
is also pseudo-null by Lemma \ref{PseudoNull}. Since $\gr I / \gr J$
is a finitely generated $B$-module, it must be a quotient of a
direct sum of finitely many copies of $C$ and is therefore
pseudo-null, again by Lemma \ref{PseudoNull}.
\end{proof}

\subsection{A control theorem for reflexive ideals}
\label{Main}
We can now prove our main result. Recall from $\S\ref{IntroRefl}$
that a reflexive two-sided ideal is a two-sided ideal
which is reflexive as a right and left ideal.
See also the remark below.

\begin{thm} Let $(A,A_1)$ be a Frobenius pair satisfying 
the derivation hypothesis,
such that $B$ and $B_1$ are UFDs. Let $I$ be a reflexive two-sided
ideal of $A$. Then $I\cap A_1$ is a reflexive two-sided
ideal of $A_1$ and $I$ is controlled by $A_1$:
\[I = (I\cap A_1)\cdot A.\]
\end{thm}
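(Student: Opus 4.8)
The plan is to bootstrap from the pseudo-null statement proved just above (Theorem \ref{IoverJ}) together with the Proposition on reflexive right ideals from \S\ref{ReflIdeals}. Since $I$ is reflexive as a right ideal over the noetherian domain $A$, and $A$ is a free (hence flat) left and right $A_1$-module by Lemma \ref{FreeMod}, part (b) of Proposition \ref{ReflIdeals} applied to the ring extension $A_1 \hookrightarrow A$ immediately gives that $I \cap A_1$ is a reflexive right ideal of $A_1$; running the same argument on the left shows $I \cap A_1$ is a reflexive left ideal, hence a reflexive two-sided ideal of $A_1$. So the first assertion is essentially free, and the real content is the control equality $I = (I \cap A_1)\cdot A$.

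For the control statement, set $J = (I \cap A_1)\cdot A$ as in Theorem \ref{IoverJ} (note $I \cap A = I$, so the $J$ of that theorem is exactly this $J$). Clearly $J \subseteq I$, so $\gr J \subseteq \gr I$, and Theorem \ref{IoverJ} tells us $\gr I / \gr J$ is pseudo-null as a $B$-module. The key extra input is that $\gr I$, being the associated graded ideal of a \emph{reflexive} right ideal $I$ of $A$, should itself be reflexive (or at least close enough) as an ideal of $B$. I would argue: $I$ reflexive over $A$ means $I = \overline{I}$; since $B$ is a UFD, $\overline{\gr I} = X B$ for a homogeneous $X$ by Lemma \ref{UFD}, and $XB/\gr I$ is pseudo-null. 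If I can show $\gr I = \overline{\gr I}$, i.e. that $\gr I$ is already reflexive in $B$, then $\gr I$ is principal. Then $\gr J \subseteq \gr I$ with pseudo-null quotient forces $\gr J = \gr I$: indeed, writing $\gr I = XB$, the quotient $XB/\gr J \cong B/(X^{-1}\gr J)$ is pseudo-null, but $X^{-1}\gr J$ is a nonzero ideal of the UFD $B$ whose reflexive closure is principal, say $x B$; pseudo-nullity of $B/(X^{-1}\gr J)$ forces $x$ to be a unit (Proposition \ref{PseudoNull}(b) plus Lemma \ref{UFD}), hence $X^{-1}\gr J$ is already $B$ (a nonzero ideal with trivial reflexive closure containing it, in a UFD, is the whole ring — more carefully, $\overline{X^{-1}\gr J} = xB = B$ and $X^{-1}\gr J \subseteq \overline{X^{-1}\gr J}$ but one also needs it to equal $B$, which follows since $B/X^{-1}\gr J$ pseudo-null and $\Ann = X^{-1}\gr J$ gives $(X^{-1}\gr J)^{-1} = B$, combined with the codimension-one characterization). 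Once $\gr J = \gr I$, completeness of the filtration on $A$ (it is zariskian, as $\gr A$ is noetherian and the filtration is complete) and the standard fact that a filtered submodule of a complete filtered module with full associated graded is everything gives $J = I$, as desired.

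The step I expect to be the main obstacle is showing that $\gr I$ is reflexive in $B$ when $I$ is reflexive in $A$ — passing reflexivity through the $\gr$ functor is not formal. The natural route is via the Rees ring: $\widetilde{A}$ is noetherian, $A = \widetilde{A}/(t-1)$ and $B = \widetilde{A}/(t)$, so one wants to relate $\Ext$-groups over $A$, $B$, and $\widetilde{A}$, using that $A$ is Auslander regular (stated in the introduction as a consequence of $G$ torsionfree, though here we only have the abstract Frobenius-pair hypotheses — so more likely one uses Auslander-Gorenstein/grade-theoretic arguments valid since $\gr A$ is a commutative noetherian UFD, hence Cohen–Macaulay in the needed range) and that pseudo-null modules over $B$ have grade $\geq 2$. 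Concretely: reflexivity of $I$ over $A$ is equivalent to $E^0(A/I) = E^1(A/I) = 0$ being replaced by the condition on the "double dual", and one filters $A/I$ suitably so that $\gr(A/I) = B/\gr I$, then uses that grade is non-increasing under passage to $\gr$ (or rather that $j_B(\gr M) \geq j_A(M)$) together with the characterization of reflexivity via grade $\geq 2$ of $\Ext^{\leq 1}$. Alternatively, one sidesteps this entirely and argues directly at the level of microlocalisations as in the proof of Theorem \ref{IoverJ}: for each homogeneous $Z$ with $ZX \in \gr I$, the reflexivity of $I$ localizes to make $Q_Z(I)$ reflexive over $Q_Z(A)$, hence (by the normal-element case) principal and controlled, giving $(\gr I)_Z = (\gr J)_Z$, hence $Z^n \gr I \subseteq \gr J$; combined with the reflexivity of $\gr I$ over the UFD $B$ — which forces $\gr I$ principal and hence $\gr I = \overline{\gr I}$ cannot be properly enlarged by inverting a single $Z$ — one concludes $\gr I = \gr J$ directly. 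I would write the proof along these lines, leaning on Theorem \ref{IoverJ} for the pseudo-null input and on Lemma \ref{UFD} and Proposition \ref{PseudoNull} to upgrade "pseudo-null quotient" to "equality" in the UFD $B$, and close with the completeness argument to lift $\gr I = \gr J$ to $I = J$.
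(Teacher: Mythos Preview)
Your first paragraph is fine and matches the paper. The gap is in the control argument. Your strategy of proving $\gr I = \gr J$ and then lifting via completeness does not work: even granting that $\gr I$ is reflexive (hence principal, say $\gr I = XB$), a pseudo-null quotient $\gr I/\gr J$ does \emph{not} force $\gr J = \gr I$. Take $B = K[x,y]$, $\gr I = xB$, $\gr J = x\cdot(x,y)$; then $\gr I/\gr J \cong B/(x,y)$ is pseudo-null but $\gr J \subsetneq \gr I$. Your attempted patch (``pseudo-nullity of $B/X^{-1}\gr J$ plus the codimension-one characterisation forces $X^{-1}\gr J = B$'') is exactly what fails here: $(x,y)^{-1} = B$ yet $(x,y) \neq B$. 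So the step you did \emph{not} flag as the obstacle is already broken, and the step you \emph{did} flag (reflexivity of $\gr I$) is both unproved and, as it turns out, unnecessary.

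The paper's argument never touches $\gr I$ directly. The observation you are missing is that $J = (I\cap A_1)\cdot A$ is itself a \emph{reflexive} right ideal of $A$: by Proposition~\ref{ReflIdeals}(a) applied to $A_1 \hookrightarrow A$, one has $\overline{(I\cap A_1)\cdot A} = \overline{I\cap A_1}\cdot A = (I\cap A_1)\cdot A$, since $I\cap A_1$ is already reflexive in $A_1$. The pseudo-nullity is then lifted from $B$ to $A$ rather than exploited inside $B$: for any submodule $N$ of $I/J$, give $N$ the subquotient filtration, so $\gr N \subseteq \gr I/\gr J$ and hence $E^0(\gr N) = E^1(\gr N) = 0$ by Theorem~\ref{IoverJ}; since the filtration on $A$ is zariskian, \cite[Proposition 3.1]{Bj} furnishes a good filtration on $E^i(N)$ with $\gr E^i(N)$ a subquotient of $E^i(\gr N) = 0$, whence $E^i(N) = 0$ for $i = 0,1$. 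Thus $I/J$ is pseudo-null as a right $A$-module. Now for $x \in I$ the annihilator $(J:x)$ of $x+J$ satisfies $(J:x)^{-1} = A$ by Proposition~\ref{PseudoNull}(a), so $J^{-1}x(J:x) \subseteq J^{-1}J \subseteq A$ gives $J^{-1}x \subseteq (J:x)^{-1} = A$, i.e.\ $x \in \overline{J} = J$. The reflexivity being used at the end is that of $J$, not of $I$ or $\gr I$.
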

\begin{proof} Retain the notation of
$\S\ref{IoverJ}$. Note that $A$ is a free right and left
$A_1$-module by Lemma \ref{FreeMod}. It follows from
Proposition \ref{ReflIdeals} that $I\cap A_1$ is a reflexive
ideal of $A_1$ and $J = (I\cap A_1)A$ is a reflexive right
ideal of $A$. It will clearly be enough to show that $I\subseteq J$.

Let $N$ be a right submodule of $I/J$. If we equip $N$ with
the subquotient filtration, then $\gr N$ is a submodule of
$\gr I/ \gr J$ and is hence pseudo-null by Theorem \ref{IoverJ}
and Lemma \ref{PseudoNull}. In particular, $E^0(\gr N) = E^1(\gr N) = 0$.

Since the filtration on $A$ is zariskian by the remarks made in
$\S\ref{MicLocBasic}$, there is a good filtration on $E^1(N)$ such
that $\gr E^1(N)$ is a subquotient of $E^1(\gr N)$ by
\cite[Proposition 3.1]{Bj}. Hence $\gr E^1(N) = 0$. It now follows
from \cite[Chapter II, \S 1.2, Lemma 9]{LV} that $E^1(N) = 0$.
Similarly $E^0(N) = 0$, and so $I/J$ is a pseudo-null right
$A$-module.

Let $x\in I$ and $(J : x) := \{a \in A : xa \in J\}$ be the
annihilator of the image of $x$ in $I/J$. By Proposition
\ref{PseudoNull}(a) we know that $(J : x)^{-1} = A$. Now
$J^{-1}x(J:x) \subseteq J^{-1}J \subseteq A$ so $J^{-1}x \subseteq
(J:x)^{-1} = A$. Hence $x \in \overline{J} = J$, as required.
\end{proof}

\begin{rem} If $B$ is a UFD, then $B$ is completely integrally
closed. In other words, $B$ is a maximal order  \cite[Proposition 5.1.3]{MR}.
It follows from \cite[X.2.1]{MaR} that $A$ is also a maximal
order. Now it is well known that a two-sided ideal $I$ of a
maximal order $A$ is reflexive as a right ideal if and only
if it is reflexive as a left ideal \cite[Proposition 5.1.8]{MR}.
\end{rem}

\section{Iwasawa algebras}
\label{IwaAlgs}

\subsection{The Campbell-Hausdorff series}
\label{CampHaus}
Following \cite{DDMS}, we define
\[\epsilon := \left\{ \begin{array}{l}2 \quad \mbox{ if }
\quad p=2 \\ 1 \quad \mbox{ otherwise.} \end{array} \right. \]
Recall \cite[\S 9.4]{DDMS} that a $\Zp$-Lie algebra $L$ is said
to be \emph{powerful} if $L$ is free of finite rank as a
module over $\Zp$ and $[L,L] \subseteq p^\epsilon L$.

Let $\Phi(X,Y)$ be the \emph{Campbell-Hausdorff series}
\cite[Definition 6.26]{DDMS}.

\begin{lem} Let $L$ be a powerful $\Zp$-Lie algebra,
$v,w\in L$ and $k \geq 0$. Then
\[\Phi(-v + p^kw,v) \equiv p^kw \mod p^{k+1}L.\]
\end{lem}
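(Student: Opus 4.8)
The plan is to expand the Campbell–Hausdorff series $\Phi(X,Y) = X + Y + \tfrac12[X,Y] + \cdots$ with $X = -v + p^kw$ and $Y = v$, and to track the $p$-adic valuation of each term. The linear part gives $X + Y = (-v + p^kw) + v = p^kw$ exactly, so it remains to show that every higher-order (bracket) term of $\Phi(X,Y)$ lies in $p^{k+1}L$. First I would recall from \cite[\S 6]{DDMS} that the terms of $\Phi$ of degree $\geq 2$ are $\Qp$-linear combinations (with denominators controlled, but crucially with no $p$ in the denominators once we are in the convergence range guaranteed for powerful Lie algebras — see \cite[Ch.~6, \S 9.4]{DDMS}) of iterated Lie brackets in $X$ and $Y$, each such bracket involving at least two arguments.

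The key observation is that in each degree-$n$ bracket term ($n \geq 2$), every argument is either $v$ or $-v + p^kw$, and at least one bracket is taken. Since $L$ is powerful, $[L,L] \subseteq p^\epsilon L \subseteq pL$, so a single bracket already contributes a factor of $p$. So every degree-$\geq 2$ term lies in $pL$. To upgrade $pL$ to $p^{k+1}L$, I would argue as follows: write $-v + p^kw = -v + (\text{something in } p^kL)$, and expand multilinearly. Any bracket monomial in which all arguments are the "$-v$" part is a bracket in $v$ alone and hence vanishes (repeated entries), so every surviving monomial contains at least one factor drawn from $p^kL$; combined with the factor of $p$ from the at-least-one-bracket observation, such a monomial lies in $p^{k+1}L$. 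The degree-$2$ term $\tfrac12[X,Y] = \tfrac12[-v+p^kw, v] = \tfrac12 p^k[w,v]$ is the model case: it lies in $p^k[L,L] \subseteq p^{k+\epsilon}L \subseteq p^{k+1}L$.

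The main obstacle will be making the multilinear expansion rigorous while controlling the rational coefficients appearing in $\Phi$. One must ensure that the denominators in the degree-$n$ terms of $\Phi$ do not cancel the powers of $p$ we have accumulated; this is exactly the content of the standard convergence estimates for $\Phi$ on powerful (or uniform) $\Zp$-Lie algebras in \cite[Ch.~6--9]{DDMS}, where it is shown that $\Phi(X,Y) \in L$ whenever $X, Y \in L$ and that, more quantitatively, the degree-$n$ contribution lies in $p^{(n-1)\epsilon - v_p(n!)\text{-type correction}}L$, which for powerful $L$ still beats $p^{k+1}$ once the $p^k$-factor is present. I would handle this by citing the relevant estimate from \cite{DDMS} rather than re-deriving it, and then assembling: linear part $= p^kw$ exactly; degree-$2$ part $\in p^{k+\epsilon}L$; degree-$\geq 3$ parts each carrying at least one $p^k$-factor and at least $\lceil n/2 \rceil \geq 1$ bracket-factors of $p^\epsilon$, hence all in $p^{k+1}L$. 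Summing, $\Phi(-v+p^kw, v) \equiv p^kw \bmod p^{k+1}L$.
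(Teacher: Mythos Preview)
Your approach is essentially the paper's: expand $\Phi$, note the linear part is exactly $p^kw$, observe that after multilinear expansion the pure-$v$ iterated brackets vanish so every surviving degree-$n$ term ($n\geq 2$) carries a factor of $p^k$, and then invoke the powerful condition $[L,L]\subseteq p^\epsilon L$ together with a denominator estimate from \cite{DDMS}. The one place where you should be sharper is the last step: the precise input is \cite[Theorem~6.28]{DDMS}, which gives $p^{\epsilon(n-1)}q_{\mathbf e}\in p^{\epsilon}\Zp$ for $n\geq 3$, so that $q_{\mathbf e}(-v+p^kw,v)_{\mathbf e}\in p^k\cdot q_{\mathbf e}p^{\epsilon(n-1)}L\subseteq p^{k+\epsilon}L$ (your bracket count should be $n-1$, not $\lceil n/2\rceil$); with this in hand the degree-$2$ term $\tfrac{p^k}{2}[w,v]\in p^{k+1}L$ is the only thing left to check, exactly as you say.
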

\begin{proof} By the definition of the Campbell-Hausdorff series,
\[\Phi(X,Y) = X + Y + \frac{1}{2}[X,Y] +
\sum_{n\geq 3}\sum_{\langle\mathbf{e}\rangle=n-1} q_\mathbf{e}(X,Y)_{\mathbf{e}}\]
where $\mathbf{e} = (e_1,\ldots,e_s)$ ranges over all possible
sequences of positive integers such that
$\langle\mathbf{e}\rangle := e_1 + \ldots + e_s = n-1$, $q_\mathbf{e}$
is a certain rational number and
\[(X,Y)_{\mathbf{e}} = [\cdots[\cdots[\cdots[[X,Y],\cdots,Y],X],\cdots,X],\cdots]\]
is a repeated Lie commutator depending on $\mathbf{e}$ of length $n$.
Fix the integer $n \geq 3$ and the sequence $\mathbf{e}$ for the time being.

Substitute $X = -v + p^kw$ and $Y = v$ into this repeated commutator
and expand: this gives a $\Zp$-linear combination of repeated
commutators of $v$ and $p^kw$ of length $n$. With the exception of
$[v,v,\ldots,v] = 0$, each one of these involves at least one $p^kw$
and hence is contained in $p^kL^n$, where $L^1 = L, L^2= [L,L], L^3
= [[L,L],L], \ldots$ is the lower central series of $L$.

Using the fact that $L$ is powerful, we deduce that
\[(-v+p^kw,v)_{\mathbf{e}} \in p^kL^n \subseteq p^{k + \epsilon(n - 1)}L.\]
Now as $n \geq 3$, $p^{\epsilon(n-1)}q_{\mathbf{e}} \in p^\epsilon \Zp$
by \cite[Theorem 6.28]{DDMS}, so
\[q_{\mathbf{e}}(-v+p^kw,v)_{\mathbf{e}} \in p^k q_{\mathbf{e}}
p^{\epsilon(n-1)}L \subseteq p^{k + \epsilon}L\]
for all $n \geq 3$ and all $\mathbf{e}$ such that $\langle \mathbf{e}\rangle = n - 1$. Hence
\[ \Phi(-v+p^kw,v) \equiv p^kw + \frac{p^k}{2}[w,v] \mod p^{k+\epsilon}L.\]
Now $\frac{p^k}{2}[w,v] \in p^{k+1}L$ since $[w,v] \in p^{\epsilon}L$
and the result follows.
\end{proof}

\subsection{The exponential map}\label{ExpMap}
Recall that there is an isomorphism
between the category of uniform pro-$p$ groups and group
homomorphisms and the category of powerful $\Zp$-Lie
algebras and Lie homomorphisms \cite[Theorem 9.10]{DDMS}.

If $L$ is a powerful $\Zp$-Lie algebra and $G$ is the corresponding
uniform pro-$p$ group, then there is a bijection
\[\exp : L \to G\]
which allows us to write every element of $G$ in the form $\exp(u)$
for some $u \in L$. The Campbell-Hausdorff series allows us to
recover the group multiplication in $G$ from the Lie structure
on $L$ \cite[Proposition 6.27]{DDMS}:
\[\exp(u) \cdot \exp(v) = \exp\Phi(u,v)\]
for all $u,v \in L$. We collect together some useful
properties of $\exp$ in the following

\begin{lem}
Let $L$ be a powerful $\Zp$-Lie algebra, $u,v \in L$ and $G$ be the
corresponding uniform pro-$p$ group. Then
\begin{enumerate}[{(}a{)}]
\item $\exp(mu) = \exp(u)^m$ for all $m\in\mathbb{Z}$,
\item $\exp(p^kL) = G^{p^k}$ for all $k \geq 0$,
\item if $u \equiv v \mod p^kL$ for some $k\geq 0$, then $\exp(u) \equiv \exp(v) \mod G^{p^k}$,
\item $\exp$ induces an $\Fp$-linear isomorphism $L/pL \to G/G^p$:
\[\exp(u + v) \equiv \exp(u)\exp(v) \mod G^p.\]
\end{enumerate}
\end{lem}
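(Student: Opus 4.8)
The plan is to derive each part from the Campbell--Hausdorff formula and the preceding Lemma on $\Phi(-v+p^kw,v)$. Part (a) is really just the definition: $\exp(mu) = \exp(u)^m$ follows by induction on $m$ using $\exp(u)\exp(v) = \exp\Phi(u,v)$ together with the fact that $\Phi(u, (m-1)u) = mu$, which holds because all the higher Lie brackets in $\Phi$ vanish when both arguments are scalar multiples of the same element $u$; the case of negative $m$ then follows since $\Phi(u,-u) = 0$, so $\exp(-u) = \exp(u)^{-1}$.

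For part (b), the inclusion $\exp(p^kL) \subseteq G^{p^k}$ is immediate from part (a): if $w \in L$ then $\exp(p^k w) = \exp(w)^{p^k} \in G^{p^k}$. For the reverse inclusion I would recall that for a uniform pro-$p$ group $G$ one has $G^{p^k} = \{g^{p^k} : g \in G\}$ (this is \cite[Lemma 3.4, Theorem 3.6]{DDMS}), so every element of $G^{p^k}$ has the form $\exp(u)^{p^k} = \exp(p^k u) \in \exp(p^k L)$, again by part (a). This gives equality.

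Parts (c) and (d) are where the previous Lemma does the work. For (c), suppose $u \equiv v \bmod p^k L$, say $u = v + p^k w$ with $w \in L$. Then
\[
\exp(v)^{-1}\exp(u) = \exp(-v)\exp(u) = \exp\Phi(-v, u) = \exp\Phi(-v + p^k w + v - v, \ldots)
\]
--- more cleanly, $\exp(-v)\exp(u) = \exp\Phi(-v, v + p^k w)$, and I would like this to be $\exp$ of an element of $p^k L$. Here I must be slightly careful: the previous Lemma computes $\Phi(-v + p^k w, v)$, not $\Phi(-v, v + p^k w)$. One fix is to note $\exp(u)\exp(v)^{-1} = \exp\Phi(u, -v) = \exp\Phi(v + p^k w, -v)$ and apply the Lemma with $v$ replaced by $-v$ and $w$ unchanged: it gives $\Phi((-(-v)) + p^k w, -v) = \Phi(v + p^k w, -v) \equiv p^k w \bmod p^{k+1} L$, in particular this lies in $p^k L$. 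Hence $\exp(u)\exp(v)^{-1} \in \exp(p^k L) = G^{p^k}$ by part (b), i.e. $\exp(u) \equiv \exp(v) \bmod G^{p^k}$. For part (d), apply part (c) with $k = 1$ to the congruence $\Phi(u,v) \equiv u + v \bmod pL$ (valid since $L$ is powerful, so $\frac12[u,v]$ and all higher terms lie in $pL$): this yields $\exp(u)\exp(v) = \exp\Phi(u,v) \equiv \exp(u+v) \bmod G^p$. Since $\exp: L \to G$ is a bijection carrying $pL$ onto $G^p$ by part (b), it descends to a bijection $L/pL \to G/G^p$, and the congruence just established shows this map is additive, hence an $\Fp$-linear isomorphism (it is automatically $\Fp$-linear once it is additive, since $L/pL$ is an $\Fp$-vector space).

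The only genuine subtlety, and the step I would flag, is the bookkeeping in part (c): one must match the shape $\Phi(-v + p^k w, v)$ appearing in the previous Lemma against whichever of $\exp(-v)\exp(u)$ or $\exp(u)\exp(v)^{-1}$ one chooses, possibly after substituting $v \mapsto -v$, and then invoke part (b) to pass from "$\exp$ of something in $p^k L$" to "lies in $G^{p^k}$". Everything else is formal manipulation of the Campbell--Hausdorff formula and the standard structure theory of uniform groups.
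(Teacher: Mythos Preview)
Your proof is correct and follows essentially the same approach as the paper. The paper is briefer: it simply cites \cite[Theorem 9.8]{DDMS} for (a), (b), (d), and for (c) it computes $\exp(u)^{-1}\exp(v) = \exp\Phi(-u,v)$ directly---since $-u = -v + p^k(-w)$ already has the shape required by the preceding lemma, no substitution $v \mapsto -v$ is needed.
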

\begin{proof} For parts (a), (b) and (d) see the proof of
\cite[Theorem 9.8]{DDMS}. Now Lemma \ref{CampHaus} implies that $\Phi(-u,v) \in p^kL$, so
\[\exp(u)^{-1}\exp(v) = \exp(-u)\exp(v) = \exp\Phi(-u,v) \in \exp(p^kL) = G^{p^k}\]
and part (c) follows.
\end{proof}

Let $(g,h) = g^{-1}h^{-1}gh$ denote the group commutator of $g,h\in G$.

\begin{prop}
Let $u \in L$ be such that $[u,L] \subseteq p^kL$ for some $k\geq \epsilon$. Then
\[ (\exp(u), \exp(v)) \equiv \exp([u,v]) \mod G^{p^{k+1}}\]
for all $v\in L$. In particular, $(\exp(u), G) \subseteq G^{p^k}$.
\end{prop}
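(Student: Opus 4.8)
The plan is to work entirely inside the Campbell--Hausdorff calculus for the powerful Lie algebra $L$, using Lemma \ref{CampHaus} and the previous Lemma as black boxes. First I would write out the group commutator in exponential coordinates: using $\exp(u)^{-1} = \exp(-u)$ and repeatedly applying $\exp(a)\exp(b) = \exp\Phi(a,b)$, we get
\[(\exp(u),\exp(v)) = \exp(-u)\exp(-v)\exp(u)\exp(v) = \exp\Phi(\Phi(-u,-v),\Phi(u,v)).\]
So the whole problem reduces to showing that the iterated Campbell--Hausdorff expression $\Phi(\Phi(-u,-v),\Phi(u,v))$ is congruent to $[u,v]$ modulo $p^{k+1}L$; then part (c) of the preceding Lemma (in the form $u\equiv v \bmod p^{k+1}L \Rightarrow \exp(u)\equiv\exp(v)\bmod G^{p^{k+1}}$) finishes the first assertion.

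The heart of the computation is the congruence estimate. I would expand $\Phi(u,v) = u + v + \tfrac12[u,v] + (\text{higher terms})$, where every term of length $\geq 3$ lies in $L^3 \subseteq p^{2\epsilon}L \subseteq p^{k}L$ — wait, one must be careful: higher Campbell--Hausdorff terms lie in $L^n$ only up to the rational coefficients $q_{\mathbf e}$, but by \cite[Theorem 6.28]{DDMS} the denominators are controlled, and the argument in the proof of Lemma \ref{CampHaus} shows that bracketing against elements of $[u,L]\subseteq p^kL$ gains a factor $p^k$. The key structural input is the hypothesis $[u,L]\subseteq p^kL$: any repeated commutator appearing in the expansion which involves $u$ at least once, \emph{and} has length $\geq 2$, lands in $p^kL$ once we also use $q_{\mathbf e}p^{\epsilon(n-1)}\in p^\epsilon\Zp$; and commutators of length $\geq 2$ with all entries equal to $v$ also need care but are handled because after the outer $\Phi$ they always get bracketed against something involving $u$. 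The terms linear in $u$ contribute $u$ and $-u$ from the two innermost $\Phi$'s, which cancel; the surviving first-order-in-$u$ contribution is exactly $\tfrac12[u,v] - \tfrac12[u,-v] = [u,v]$ modulo higher terms, and these higher terms are in $p^{k+1}L$ by the powerfulness-plus-hypothesis estimate (noting $\tfrac{p^k}{2}[w,v]\in p^{k+1}L$ whenever $[w,v]\in p^\epsilon L$, exactly as at the end of Lemma \ref{CampHaus}'s proof). I expect the bookkeeping of which commutators survive versus which fall into $p^{k+1}L$ to be the main obstacle — it is essentially a more elaborate version of the estimate already carried out in Lemma \ref{CampHaus}, applied to a doubly-iterated $\Phi$ rather than a single one, and one wants to organise it so that one only ever needs "bracket once against $u$ costs $p^k$, and any genuinely higher term costs an extra $p^\epsilon \geq p$".

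For the final sentence "$(\exp(u),G)\subseteq G^{p^k}$": this follows immediately from the displayed congruence together with part (b) of the preceding Lemma. Indeed $(\exp(u),\exp(v)) \equiv \exp([u,v]) \bmod G^{p^{k+1}}$, and $[u,v]\in [u,L]\subseteq p^kL$, so $\exp([u,v])\in \exp(p^kL) = G^{p^k}$; since $G^{p^{k+1}}\subseteq G^{p^k}$ as well, we get $(\exp(u),\exp(v))\in G^{p^k}$. As $v$ was arbitrary and every element of $G$ is of the form $\exp(v)$ by the bijectivity of $\exp$, this gives $(\exp(u),G)\subseteq G^{p^k}$, completing the proof.
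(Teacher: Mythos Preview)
Your approach is valid in principle but takes a genuinely different route from the paper's. You expand the group commutator as a doubly-iterated Campbell--Hausdorff series $\Phi(\Phi(-u,-v),\Phi(u,v))$ and then try to estimate every surviving term. The paper instead first collapses three of the four exponential factors using the conjugation identity $\exp(-u)\exp(-v)\exp(u) = \exp(-z)$ with $z = v\cdot\exp(\ad u) = v + [v,u] + \tfrac{1}{2}[[v,u],u] + \cdots$ (this is \cite[Exercise 6.12]{DDMS}). Because $\ad(u)$ sends $L$ into $p^kL$, one sees immediately that $z = v + p^kw$ with $p^kw \equiv [v,u] \bmod p^{k+1}L$; the group commutator then becomes $\exp\Phi(-z,v) = \exp\Phi(-v - p^kw, v)$, which is \emph{exactly} the shape to which Lemma \ref{CampHaus} applies as a black box, giving $\Phi(-z,v) \equiv -p^kw \equiv [u,v] \bmod p^{k+1}L$ in one line.

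What this buys the paper is that all the delicate bookkeeping you anticipate (which higher commutators survive, how the rational denominators $q_{\mathbf e}$ interact with the $p^{k}$ gain from bracketing against $u$, why the cross terms in the outer $\Phi$ land in $p^{k+1}L$ rather than merely $p^kL$) is entirely absorbed into the two clean estimates $(\ad u)^n(v)/n! \in p^{k+1}L$ for $n\geq 2$ and the already-proved Lemma \ref{CampHaus}. Your direct expansion can be pushed through, but you would need sharper control on the $q_{\mathbf e}$ than the generic bound $p^{\epsilon(n-1)}q_{\mathbf e}\in p^\epsilon\Zp$ to make the length-$\geq 3$ contributions in $A+B$ fall into $p^{k+1}L$ for small primes, and you would also have to check separately that $\tfrac12[A,B]$ and the higher terms of the outer $\Phi$ land there; none of this is written out in your sketch. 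Your treatment of the final sentence is correct and matches the paper.
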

\begin{proof} We can compute the conjugate $\exp(-u)\exp(-v)\exp(u)$
in $G$ using \cite[Exercise 6.12]{DDMS}: $\exp(-u)\exp(-v)\exp(u) = \exp(-z)$, where
\[z := v.\exp(\ad(u)) = v + [v,u] + \frac{1}{2}[[v,u],u] +
\frac{1}{6}[[[v,u],u],u] + \cdots \in L.\] Now $\exp(p^k) = 1 + p^k
+ \frac{1}{2}p^{2k} + \ldots \equiv 1 + p^k \mod p^{k+1}\Zp$ and $L
\cdot \ad(u)\subseteq p^kL$, so
\[\frac{v \cdot \ad(u)^n}{n!} \in \frac{p^{kn}}{n!}L \subseteq p^{k+1}L\]
for all $n \geq 2$. Hence $z = v + p^kw$ for some $w \in L$ such that
$p^kw \equiv [v,u] \mod p^{k+1}L$.

Applying Lemma \ref{CampHaus}, we deduce that
\[ \Phi(-z,v) = \Phi(-v - p^kw,v) \equiv -p^kw \equiv [u,v] \mod p^{k+1}L.\]
Using Lemma \ref{ExpMap}(c), we finally obtain
\[ (\exp(u),\exp(v)) = \exp(-z)\cdot\exp(v) =
\exp \Phi(-z,v) \equiv \exp([u,v]) \mod G^{p^{k+1}},\]
as required.
\end{proof}

\subsection{Subalgebras and subgroups}
\label{SubAlgGrp}
From now on, we will assume that $L$ is a powerful $\Zp$-Lie
algebra of rank $d$ and we will fix a subalgebra $L_1$ of $L$
which contains $pL$. We can find a subset $\{v_1,\ldots,v_d\}$
of $L$ such that
\begin{itemize}
\item $\{v_i + pL : 1 \leq i \leq d\}$ is an $\Fp$-basis for $L/pL$, and
\item $\{v_i + L_1 : 1 \leq i \leq t\}$ is an $\Fp$-basis for $L/L_1$ for some $t$.
\end{itemize}
In many interesting cases, $L_1$ will in fact be equal to $pL$.

\begin{lem} Let $G = \exp(L)$ be the uniform pro-$p$ group corresponding
to $L$, let $G_1 = \exp(L_1)$ and let $g_i = \exp(v_i)$ for all $i$. Then
\begin{enumerate}[{(}a{)}]
\item $G_1$ is a \emph{subgroup} of $G$,
\item $\{g_1,\ldots,g_d\}$ is a topological generating set for $G$, and
\item $\{g_1^p,\ldots,g_t^p,g_{t+1},\ldots,g_d\}$ is a topological generating set for $G_1$.
\end{enumerate}
\end{lem}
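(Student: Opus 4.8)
The plan is to unpack the exponential dictionary from \S\ref{ExpMap} and verify each of the three claims directly, using that $L_1 \supseteq pL$ and the Campbell--Hausdorff estimates of \S\ref{CampHaus}. First I would prove (a): since $L_1$ is by hypothesis a subalgebra of $L$ (hence closed under the Lie bracket) and contains $pL$, the set $G_1 = \exp(L_1)$ is closed under multiplication by the formula $\exp(u)\exp(v) = \exp\Phi(u,v)$ together with the observation that $\Phi(u,v) \in L_1$ whenever $u,v\in L_1$ — the linear terms $u+v$ lie in $L_1$, and every higher Campbell--Hausdorff term is a $\Zp$-multiple of an iterated bracket of $u$ and $v$, hence lies in $[L,L_1]\subseteq [L,L]\subseteq L_1$; inverses are handled by $\exp(u)^{-1} = \exp(-u)$ via Lemma \ref{ExpMap}(a). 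Closedness of $G_1$ in $G$ follows since $\exp$ is a homeomorphism and $L_1$ is closed in $L$ (it contains $pL$, so it is open, hence closed).

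Next I would prove (b). By Lemma \ref{ExpMap}(d), $\exp$ induces an $\Fp$-linear isomorphism $L/pL \to G/G^p$, so the images $g_i G^p = \exp(v_i)G^p$ form an $\Fp$-basis of $G/G^p$ by the first bullet in \S\ref{SubAlgGrp}; in particular they generate $G/G^p$. Since $G$ is a pro-$p$ group, a subset generates $G$ topologically as soon as its image generates $G/\Phi(G) = G/G^p$ (for uniform $G$ the Frattini quotient is $G/G^p$, see \cite[Proposition 1.14 and \S 3]{DDMS}), so $\{g_1,\ldots,g_d\}$ topologically generates $G$.

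For (c) I would run the same argument one level down, applied to $G_1$ in place of $G$. The Frattini quotient of the uniform group $G_1 = \exp(L_1)$ is $G_1/G_1^p = \exp(L_1)/\exp(pL_1) \cong L_1/pL_1$ by Lemma \ref{ExpMap}(b,d). So it suffices to check that the images of $g_1^p,\ldots,g_t^p,g_{t+1},\ldots,g_d$ span $L_1/pL_1$. Now $g_i^p = \exp(pv_i)$ by Lemma \ref{ExpMap}(a), and $pv_i \in pL \subseteq L_1$; likewise $g_j = \exp(v_j) \in G_1$ for $j > t$ because the second bullet in \S\ref{SubAlgGrp} says $v_j \in L_1$ for $j>t$. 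So the claim reduces to: $\{pv_1,\ldots,pv_t\}\cup\{v_{t+1},\ldots,v_d\}$ spans $L_1$ modulo $pL_1$. This is a short linear-algebra check over $\Fp$: $\{v_i+pL\}$ is a basis of $L/pL$, and $\{v_i + L_1 : i\le t\}$ is a basis of $L/L_1$, which forces $L_1 = \Zp v_{t+1}\oplus\cdots\oplus\Zp v_d \oplus pL$ as a $\Zp$-submodule; reducing mod $pL_1$ and noting $pL \subseteq L_1$ with $pL/pL_1 \cong L/L_1$ identifies a spanning set, and one counts dimensions to see $\{pv_1,\ldots,pv_t, v_{t+1},\ldots,v_d\}$ is exactly a basis of $L_1/pL_1$.

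The main obstacle is the bookkeeping in (c): one has to be careful that the decomposition $L_1 = pv_1\Zp\oplus\cdots\oplus pv_t\Zp\oplus v_{t+1}\Zp\oplus\cdots\oplus v_d\Zp$ really follows from the two basis conditions, and then that reduction modulo $pL_1$ does not collapse anything unexpectedly — the point being that $pL_1 \subsetneq pL$ in general, so $pv_i$ for $i\le t$ is genuinely nonzero mod $pL_1$. Everything else is a routine translation through the exp/log dictionary and the standard fact that a uniform pro-$p$ group is topologically generated by any lift of a Frattini-quotient basis.
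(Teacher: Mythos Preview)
Your arguments for (a) and (b) are essentially fine, though in (a) your phrase ``a $\Zp$-multiple of an iterated bracket'' is not literally correct: the Campbell--Hausdorff coefficients $q_{\mathbf{e}}$ are rational and can have $p$ in the denominator. What saves you is the estimate $p^{\epsilon(n-1)}q_{\mathbf e}\in p^\epsilon\Zp$ from \cite[Theorem 6.28]{DDMS} combined with $(u,v)_{\mathbf e}\in p^{\epsilon(n-1)}L$, which lands each term in $pL\subseteq L_1$. The paper's proof of (a) is shorter: it simply invokes Lemma~\ref{ExpMap}(d) to get $\exp(u)\exp(v)\equiv\exp(u+v)\bmod G^p$ and then uses $G^p=\exp(pL)\subseteq G_1$.

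There is, however, a genuine gap in your proof of (c). You assert that $G_1=\exp(L_1)$ is uniform and identify its Frattini quotient with $L_1/pL_1$, but neither is justified by the hypotheses of \S\ref{SubAlgGrp}: $L_1$ is only assumed to be a subalgebra containing $pL$, and such an $L_1$ need not be powerful. For a concrete failure, take $p$ odd, $L=\mathfrak{sl}_2(p\Zp)$ with the usual basis $e,f,h$, and $L_1=\Zp e\oplus\Zp f\oplus p\Zp h$; then $[e,f]=ph\notin pL_1$, so $L_1$ is not powerful, and one checks that $(\exp(e),\exp(f))\notin G_1^p$, so $G_1$ is not powerful either. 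Your identification $G_1^p=\exp(pL_1)$ and the passage to $L_1/pL_1$ therefore break down.

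The paper avoids this by never appealing to uniformity of $G_1$. Instead it works through $G^p$, which \emph{is} uniform: by \cite[Theorem 3.6(iii)]{DDMS} and part (b), $\{g_1^p,\ldots,g_d^p\}$ topologically generates $G^p$; and since $\{g_{t+1}G^p,\ldots,g_dG^p\}$ is a basis of $G_1/G^p$ (via Lemma~\ref{ExpMap}(d)), the set $\{g_1^p,\ldots,g_t^p,g_{t+1},\ldots,g_d\}$ generates $G_1$. The elements $g_{t+1}^p,\ldots,g_d^p$ are redundant because they are powers of generators already present. Your linear-algebra computation that $\{pv_1,\ldots,pv_t,v_{t+1},\ldots,v_d\}$ is a $\Zp$-basis of $L_1$ is correct and useful, but to finish you should feed it into an argument about $G^p$ rather than about a putative $G_1^p$.
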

\begin{proof}
(a) This is not entirely trivial, since $\exp(M)$ doesn't have to be
a subgroup of $G$ for arbitrary subalgebras $M$ of $L$ --- see
\cite{Ilani}. However, $\exp(u)\exp(v) \equiv\exp(u+v) \mod G^p$ for
all $u,v\in L$ by Lemma \ref{ExpMap} and $G^p = \exp(pL) \subseteq
\exp(L_1) = G_1$, so $xy \in G_1$ for all $x,y \in G_1$ and $G_1$
\emph{is} a subgroup.

(b) Let $M$ be the $\Zp$-submodule of $L$ generated by
$\{v_1,\ldots,v_d\}$. Because
\[M + pL = L\]
by assumption, $M=L$ by Nakayama's Lemma and hence $\{v_1,\ldots,v_d\}$ is
a $\Zp$-basis for $L$ since $L$ has rank $d$. Part (b) now
follows from \cite[Theorem 9.8]{DDMS}.

(c) By \cite[Theorem 3.6(iii)]{DDMS} and part (b), $\{g_1^p,\ldots,g_d^p\}$
is a topological generating set for $G^p$. Since $\{g_{t+1}G^p,\ldots,g_dG^p\}$
is a basis for $G_1/G^p$ by Lemma \ref{ExpMap}(d), $\{g_1^p,\ldots,g_t^p,g_{t+1},\ldots,g_d\}$
must be a topological generating set for $G_1$, as required.
\end{proof}

\subsection{The group algebra of a uniform pro-$p$ group}
\label{GpAlg} Let $G$ is a uniform pro-$p$ group and let $K$ be a
field of characteristic $p$. Let $J$ be the augmentation ideal of
the group algebra $K[G]$ of $G$. If $\{g_1,\ldots,g_d\}$ is a
topological generating set for $G$ and set $b_i := g_i - 1$ for all
$i=1,\ldots,d$, then these elements all lie in $J$.

\begin{prop} The associated graded ring of $K[G]$ with respect
to the $J$-adic filtration is isomorphic to the polynomial
algebra $K[y_1,\ldots,y_d]$.
\end{prop}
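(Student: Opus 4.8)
The plan is to show that the $J$-adic filtration on $K[G]$ has associated graded ring generated by the symbols $\gr b_i$, and that these satisfy no relations other than commutativity. The starting point is the filtration theory of the augmentation ideal: since $G$ is uniform with topological generating set $\{g_1,\dots,g_d\}$, Lazard's theory (or \cite[\S 7.4]{DDMS} and the standard computation of the graded ring of $\Lambda_G$) tells us that $\gr K[G]$ is a commutative polynomial ring in $d$ variables. Concretely, I would proceed as follows.

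First I would verify that $\gr K[G]$ is \emph{commutative}. For $g,h\in G$ the commutator identity $gh - hg = hg\bigl((g,h) - 1\bigr)$ shows that $[b_g, b_h] \in J\cdot \mathfrak{a}$ where $\mathfrak{a}$ is the ideal generated by $(g,h)-1$ for $g,h$ running over $G$; since $G$ is powerful, $(G,G)\subseteq G^p$, and one checks using Lemma \ref{ExpMap} and Proposition (the one on $(\exp(u),\exp(v))$) that $(g,h)-1$ lies in a high enough power of $J$ to force $[b_i,b_j]\in J^3$, so the principal symbols $y_i := \gr b_i$ commute in $\gr K[G]$. Likewise, using $g^p - 1 = ((g-1)+1)^p - 1 \equiv (g-1)^p \bmod p$ together with the relation $g^p\in G^{p}$ pushing $g^p-1$ into $J^2$ — no, more carefully: in characteristic $p$ one has $b_g^p = (g-1)^p = g^p - 1$ (as $K$ has characteristic $p$), and $g^p \in G^{p^2}$-type membership from powerfulness shows $b_g^p\in J^{p+?}$; in any case $b_g^p$ has symbol degree $>p$, so $y_i^p = 0$ would be \emph{false} — rather the point is the reverse, that $b_g^p$ is a \emph{higher-order} term, which is exactly what makes the graded ring a genuine polynomial ring rather than a truncated one. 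The upshot is a surjective graded $K$-algebra homomorphism $K[y_1,\dots,y_d]\twoheadrightarrow \gr K[G]$.

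Second, I would prove \emph{injectivity}, i.e. that $\dim_K J^n/J^{n+1} = \binom{n+d-1}{d-1}$, the dimension of the degree-$n$ part of $K[y_1,\dots,y_d]$. This is the heart of the argument and the main obstacle: one must show the monomials $\mathbf b^\alpha = b_1^{\alpha_1}\cdots b_d^{\alpha_d}$ with $|\alpha| = n$ span $J^n$ modulo $J^{n+1}$ and are linearly independent there. Spanning follows from an inductive argument rewriting any product $b_{i_1}\cdots b_{i_n}$ into increasing order modulo $J^{n+1}$ using the commutator estimates above (this is where powerfulness of $L$ and the hypothesis $p\mid$ nothing relevant enters, via the Campbell–Hausdorff bounds in Lemma \ref{CampHaus}). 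Linear independence is the genuinely hard input: it is equivalent to the statement that $\bigcap_n J^n = 0$ together with a dimension count, and the cleanest route is to invoke the known structure of $\gr \Omega_G$ (for $K = \Fp$) from \cite{DDMS} — namely that the $J$-adic graded ring of $\Omega_G$ over a uniform group of dimension $d$ is $\Fp[y_1,\dots,y_d]$ — and then base-change to an arbitrary field $K$ of characteristic $p$, using that $K[G] = \Omega_G\otimes_{\Fp}K$ is flat over $\Omega_G$ so that $\gr_J(K[G]) \cong \gr_J(\Omega_G)\otimes_{\Fp}K \cong K[y_1,\dots,y_d]$. If one instead wants a self-contained proof, the filtration would be compared with the dimension subgroup / Zassenhaus filtration of $G$, whose successive quotients have the predicted ranks for a uniform group by \cite[\S 7--9]{DDMS}.

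In summary: build the surjection $K[y_1,\dots,y_d]\to\gr_J K[G]$ from the commutator and $p$-th power estimates (routine, using powerfulness), then establish the matching Hilbert series — either by citing the $\Fp$-case in \cite{DDMS} and flat base change, or by identifying $J^n/J^{n+1}$ with the appropriate quotient of the group-algebra filtration via dimension subgroups. The commutativity and surjectivity are straightforward; the dimension count establishing injectivity is where the real work (or the appeal to \cite{DDMS}) lies.
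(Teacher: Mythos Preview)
Your overall strategy---define $\varphi:K[y_1,\ldots,y_d]\to\gr K[G]$ via commutativity of the $b_i$ modulo $J^3$, invoke \cite[Theorem 7.24]{DDMS} for the case $K=\Fp$, and then extend scalars---is exactly the paper's proof.

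One correction to your aside on $p$-th powers: you have the direction reversed. In characteristic $p$ one has $b_i^p=(g_i-1)^p=g_i^p-1$, and the point is that $g_i^p-1$ lies in $J^p$ but \emph{not} in $J^{p+1}$ (this is part of what \cite[Theorem 7.24]{DDMS} establishes), so $b_i^p$ has symbol degree exactly $p$ and $y_i^p=\gr(b_i^p)\neq 0$. If instead $b_i^p$ were a ``higher-order term'' (i.e.\ in $J^{p+1}$) as you wrote, then $y_i^p$ would vanish and the graded ring would be truncated, not polynomial. Fortunately this discussion is not needed for the argument you actually run: commutativity modulo $J^3$ suffices to define $\varphi$, and the rest is the citation plus base change.
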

\begin{proof} As in the proof of \cite[Theorem 7.22]{DDMS}, the
$b_i$'s commute modulo $J^3$. We can therefore define a $K$-algebra
homomorphism $\varphi : K[y_1,\ldots, y_d] \to \gr K[G]$ by
setting $\varphi(y_i) = b_i + J^2$. When the field $K$ is
$\Fp$, \cite[Theorem 7.24]{DDMS} implies that $\varphi$ is
an isomorphism. The general case now follows, using a
simple ``extension of scalars" argument.
\end{proof}

From now on we will identify $K[y_1,\ldots,y_d]$ with $\gr K[G]$
via the map $\varphi$. For each $\alpha\in\mathbb{N}^t$, let
$\mathbf{b}^\alpha := b_1^{\alpha_1}\cdots b_d^{\alpha_d} \in K[G]$ and define
\[\mathcal{M} := \{\mathbf{b}^\alpha : \alpha \in \mathbb{N}^d\}.\]
Writing $|\alpha| := \alpha_1 + \ldots + \alpha_d$, we can define
\[\mathcal{M}_{<n} := \{\mathbf{b}^\alpha \in \mathcal{M} : |\alpha| < n\}\]
for each $n \geq 0$, the subsets $\mathcal{M}_{=n}$ and
$\mathcal{M}_{\geq n}$ being defined similarly.

\begin{cor} $K[G] = J^n \oplus K[\mathcal{M}_{<n}]$ for all $n\geq 0$.
\end{cor}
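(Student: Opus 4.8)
The plan is to prove the identity by induction on $n$, using the Proposition above (the isomorphism $\varphi\colon K[y_1,\ldots,y_d] \to \gr K[G]$) as the only real input. When $n=0$ we have $J^0 = K[G]$ and $\mathcal{M}_{<0} = \emptyset$, so $K[\mathcal{M}_{<0}] = 0$ and the claimed decomposition is trivial. For the inductive step, the key observation I would isolate first is that, for each $n \geq 0$, the cosets $\{\mathbf{b}^\alpha + J^{n+1} : |\alpha| = n\}$ form a $K$-basis of $J^n/J^{n+1}$. This is immediate from the Proposition: since $\varphi$ is a ring homomorphism, $\varphi(\mathbf{y}^\alpha) = \mathbf{b}^\alpha + J^{|\alpha|+1}$, and because $\gr K[G]$ is commutative the order of the factors in $\mathbf{b}^\alpha = b_1^{\alpha_1}\cdots b_d^{\alpha_d}$ does not affect its principal symbol even though the $b_i$ only commute modulo $J^3$ in $K[G]$ itself; finally $\{\mathbf{y}^\alpha : |\alpha| = n\}$ is a $K$-basis of the degree-$n$ component of the polynomial ring.

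Granting this, I would deduce that
\[
J^n = K[\mathcal{M}_{=n}] \oplus J^{n+1},
\]
where $K[\mathcal{M}_{=n}]$ denotes the $K$-span of $\{\mathbf{b}^\alpha : |\alpha| = n\}$: spanning holds because these cosets span $J^n/J^{n+1}$, and the intersection is zero because any relation $\sum_{|\alpha|=n} c_\alpha \mathbf{b}^\alpha \in J^{n+1}$ would give a relation among the basis cosets. Combining this with the inductive hypothesis $K[G] = J^n \oplus K[\mathcal{M}_{<n}]$ and substituting yields $K[G] = J^{n+1} + K[\mathcal{M}_{=n}] + K[\mathcal{M}_{<n}]$, and since $\mathcal{M}_{<n+1}$ is the disjoint union of $\mathcal{M}_{=n}$ and $\mathcal{M}_{<n}$ the last two terms together span $K[\mathcal{M}_{<n+1}]$. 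It then remains to check directness: if $u \in J^{n+1}$ equals $v + w$ with $v \in K[\mathcal{M}_{=n}]$ and $w \in K[\mathcal{M}_{<n}]$, then $w = u - v$ lies in $J^n$ (as $J^{n+1}\subseteq J^n$ and $K[\mathcal{M}_{=n}]\subseteq J^n$), so $w \in J^n \cap K[\mathcal{M}_{<n}] = 0$ by the inductive hypothesis, whence $u = v \in J^{n+1}\cap K[\mathcal{M}_{=n}] = 0$ by the displayed decomposition of $J^n$. This gives $K[G] = J^{n+1} \oplus K[\mathcal{M}_{<n+1}]$, completing the induction.

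I do not anticipate a serious obstacle: once the basis statement for $J^n/J^{n+1}$ is in hand the rest is linear-algebra bookkeeping with two nested direct-sum decompositions. The one point deserving care — and which I would make explicit — is the non-commutativity of the $b_i$ in $K[G]$: a priori $\mathbf{b}^\alpha$ depends on the chosen ordering of the factors, and only its principal symbol in $\gr K[G]$ is order-independent, so all the cancellations above are legitimate precisely because they are performed at the level of the associated graded pieces at each stage.
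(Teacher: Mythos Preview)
Your proof is correct and follows essentially the same approach as the paper: both deduce the decomposition $J^n = J^{n+1} \oplus K[\mathcal{M}_{=n}]$ from the Proposition and then proceed by an easy induction on $n$. Your write-up is more detailed than the paper's one-line argument (and in fact the paper's printed version has a harmless typo, writing $J^{n+1} = J^n \oplus K[\mathcal{M}_{=n}]$), but the underlying idea is identical.
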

\begin{proof} The above proposition implies that $J^{n+1} = J^n \oplus K[\mathcal{M}_{=n}]$
for all $n\geq 0$. The corollary follows from this by an easy induction.
\end{proof}

\subsection{Subgroups}
\label{GrKG1}
Recall the notation of \S\ref{SubAlgGrp}, so that $\{g_1^p,\ldots,g_t^p,g_{t+1},\ldots,g_d\}$
is a topological generating set for $G_1$. Now define
\[\mathcal{N} := \{\mathbf{b}^\alpha\in\mathcal{M} : p|\alpha_i \quad\mbox{for all}\quad i\leq t\}\]
and note that the $K$-linear span $K[\mathcal{N}]$ of
$\mathcal{N}$ is contained in $K[G_1]$.

\begin{lem}\begin{enumerate}[{(}a{)}]
\item $K[\mathcal{N}]$ is dense in $K[G_1]$ with respect to the $J$-adic topology.
\item $K[\mathcal{N}] \cap J^n = K[\mathcal{N} \cap \mathcal{M}_{\geq n}]$, for all $n \geq 0$.
\item The image of $\gr K[\mathcal{N}]$ inside $\gr K[G]$ is equal to $K[y_1^p,\ldots,y_t^p,y_{t+1},\ldots,y_d].$
\end{enumerate}
\end{lem}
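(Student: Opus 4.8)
The plan is to treat the three parts in order, using the decomposition of $K[G]$ from Corollary \ref{GpAlg} (namely $K[G] = J^n \oplus K[\mathcal{M}_{<n}]$) as the main computational engine, together with the structure of $G_1$ supplied by Lemma \ref{SubAlgGrp}(c).

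For part (a), I would argue that $K[G_1]$ is the closure of $K[\mathcal{N}]$ in $K[G]$. The topological generating set $\{g_1^p, \ldots, g_t^p, g_{t+1}, \ldots, g_d\}$ for $G_1$ shows that $K[G_1]$ is topologically generated, as a $K$-algebra, by the elements $g_i^p - 1$ for $i \leq t$ and $g_i - 1$ for $i > t$. Writing $g_i^p - 1 = (b_i+1)^p - 1$ and expanding, one sees that $g_i^p - 1$ lies in the closed $K$-subalgebra generated by $b_i$; since that subalgebra is contained in the closure of $K[\mathcal{N}]$ (note $b_i^p \in \mathcal N$ and more generally products of the $b_i^p$ for $i\le t$ with arbitrary $b_j$ for $j>t$ lie in $\mathcal N$), we get $K[G_1]$ inside the closure of $K[\mathcal{N}]$. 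The reverse inclusion is clear since $\mathcal{N} \subseteq K[G_1]$ and $K[G_1]$ is closed (being a finitely generated module, hence closed, in the Noetherian complete filtered ring $K[G]$). Hence the closure of $K[\mathcal{N}]$ equals $K[G_1]$, which is precisely the density assertion.

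For part (b), the inclusion $K[\mathcal{N} \cap \mathcal{M}_{\geq n}] \subseteq K[\mathcal{N}] \cap J^n$ is immediate since $\mathcal{M}_{\geq n} \subseteq J^n$. For the reverse, take $x \in K[\mathcal{N}] \cap J^n$ and write $x = \sum_{\alpha} \lambda_\alpha \mathbf{b}^\alpha$ as a (finite) $K$-linear combination of monomials from $\mathcal{N}$; split this as $x = x_{<n} + x_{\geq n}$ according to whether $|\alpha| < n$ or $|\alpha| \geq n$. Then $x_{<n} \in K[\mathcal{M}_{<n}]$ while $x - x_{\geq n} = x_{<n}$, and since $x \in J^n$ and $x_{\geq n} \in J^n$ we get $x_{<n} \in J^n \cap K[\mathcal{M}_{<n}] = 0$ by Corollary \ref{GpAlg}. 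Hence $x = x_{\geq n} \in K[\mathcal{N} \cap \mathcal{M}_{\geq n}]$, as required. This part is essentially bookkeeping against the direct sum decomposition.

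For part (c), I would compute the principal symbols directly. Each $\mathbf{b}^\alpha \in \mathcal{N}$ with $|\alpha| = n$ has principal symbol $\mathbf{y}^\alpha = y_1^{\alpha_1}\cdots y_d^{\alpha_d}$, and the condition $p \mid \alpha_i$ for $i \leq t$ means precisely that this monomial lies in $K[y_1^p, \ldots, y_t^p, y_{t+1}, \ldots, y_d]$. Conversely every such monomial arises this way. Using part (b), a homogeneous element of $\gr K[\mathcal{N}]$ of degree $n$ is represented by an element of $K[\mathcal{N} \cap \mathcal{M}_{\geq n}]$ modulo $K[\mathcal{N}] \cap J^{n+1} = K[\mathcal{N} \cap \mathcal{M}_{\geq n+1}]$, hence by a $K$-linear combination of the $\mathbf{b}^\alpha$ with $\alpha \in \mathcal{N}$ and $|\alpha| = n$, whose symbol is the corresponding combination of the $\mathbf{y}^\alpha$. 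The main subtlety — and the one place I would be careful — is checking that $\gr K[\mathcal N]$ maps \emph{injectively} into $\gr K[G]$ under the natural map, i.e. that the subspace filtration on $K[\mathcal N]$ computed from the $J$-adic filtration has associated graded equal to its image; this follows again from part (b), which identifies $K[\mathcal N]\cap J^n$ with $K[\mathcal N\cap\mathcal M_{\ge n}]$ so that the graded pieces are spanned by genuinely distinct monomials $\mathbf b^\alpha$, whose symbols $\mathbf y^\alpha$ are linearly independent in $\gr K[G]$. Hence the image is exactly the $K$-span of the monomials $\mathbf{y}^\alpha$ with $p \mid \alpha_i$ for $i \leq t$, which is $K[y_1^p, \ldots, y_t^p, y_{t+1}, \ldots, y_d]$.
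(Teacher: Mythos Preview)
Your arguments for parts (b) and (c) are correct and are essentially the paper's proof: split against the direct-sum decomposition $K[G] = J^n \oplus K[\mathcal{M}_{<n}]$ from Corollary~\ref{GpAlg}, and then read off the symbols.

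Your argument for part (a), however, has a genuine gap. You claim that the $K$-subalgebra generated by the $b_i^p$ ($i\le t$) and the $b_j$ ($j>t$) is contained in the closure of $K[\mathcal N]$, on the grounds that ``products of the $b_i^p$ with arbitrary $b_j$ lie in $\mathcal N$''. But $\mathcal N$ consists of \emph{ordered} monomials $\mathbf b^\alpha = b_1^{\alpha_1}\cdots b_d^{\alpha_d}$, and $K[\mathcal N]$ is only their $K$-linear span, not a subalgebra. A product such as $b_{t+1}\,b_1^p$, or more generally any word in the generators taken in the wrong order, is not an element of $\mathcal N$; rewriting it in the standard order produces lower-degree correction terms coming from commutators, and you give no reason why those corrections lie in $K[\mathcal N]$ or in its closure. (Your side remark that $K[G_1]$ is ``closed \ldots\ in the Noetherian complete filtered ring $K[G]$'' is also off: $K[G]$ is the ordinary group algebra here, not its completion.)

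The paper avoids this problem by working on the group side rather than the algebra side. Given $x\in G_1$ and $n\ge 0$, one uses that $G/G^{p^n}$ is a finite powerful $p$-group to write $x = g_1^{\lambda_1}\cdots g_d^{\lambda_d}\,u$ with $u\in G^{p^n}$; the image of $x$ in $G/G^p$ forces $p\mid\lambda_i$ for $i\le t$. Since $\mathrm{char}\,K = p$ one has $g_i^{\lambda_i} = (1+b_i)^{\lambda_i} = (1+b_i^p)^{\lambda_i/p}$ for $i\le t$, so $y := g_1^{\lambda_1}\cdots g_d^{\lambda_d}$ lies in $K[\mathcal N]$, and $x \equiv y \bmod J^n$ because $G^{p^n}-1\subseteq J^n$. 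This produces the required approximation directly, without ever needing $K[\mathcal N]$ to be multiplicatively closed.
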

\begin{proof} (a) Let $x \in G_1$ and $n\geq 0$. It will be enough
to show that $x \equiv y \mod J^n$ for some $y\in K[\mathcal{N}]$.
Since $G/G^{p^n}$ is a finite powerful $p$-group, by
\cite[Corollary 2.8]{DDMS} we can find non-negative integers
$\lambda_1,\ldots,\lambda_d$ such that
\[x = g_1^{\lambda_1}\cdots g_d^{\lambda_d} u\]
for some $u\in G^{p^n}$. Considering the image of $x$ in $G/G^p$ and
using the fact that $x \in G_1$, we see that $\lambda_i$ is divisible
by $p$ for all $i \leq t$. Write $\lambda_i = p\mu_i$ for
some $\mu_i \in \mathbb{N}$, for each $i\leq t$. Let
$y := g_1^{\lambda_1}\cdots g_d^{\lambda_d}$; then
\[y = (1 + b_1^p)^{\mu_1}\cdots (1+b_t^p)^{\mu_t}(1+b_{t+1})^{\lambda_{t+1}}\cdots (1+b_d)^{\lambda_d} \in K[\mathcal{N}].\]
Because $G^{p^n} - 1 \subseteq J^n$, the element $u$ is congruent
to $1$ modulo $J^n$, and hence
\[ x = yu \equiv y \mod J^n\]
as required.

(b) It will be enough to show that $K[\mathcal{N}] \cap J^n
\subseteq K[\mathcal{N} \cap \mathcal{M}_{\geq n}]$, so let $a \in
K[\mathcal{N}] \cap J^n$. We can decompose $a$ uniquely as $a = b +
c$, where $b \in K[\mathcal{N} \cap \mathcal{M}_{<n}]$ and $c \in
K[\mathcal{N} \cap \mathcal{M}_{\geq n}]$. Now $c \in
K[\mathcal{M}_{\geq n}] \subseteq J^n$ so $b = a - c \in J^n \cap
K[\mathcal{M}_{<n}] = 0$ by Corollary \ref{GpAlg}. Hence $a = c \in
K[\mathcal{N} \cap \mathcal{M}_{\geq n}]$, as required.

(c) This follows immediately from part (b).
\end{proof}

\subsection{Completed group algebras}
\label{IwaFrobPair}
Let $H$ be a compact $p$-adic analytic group.
The \emph{completed group algebra} $KH$ is by definition the inverse
limit
\[ KH  :=  \underleftarrow{\lim} \hspace{2pt} K[H/N], \]
as $N$ runs over all the open normal subgroups of $H$. When the
field $K$ is finite, this algebra is sometimes called
the \emph{Iwasawa algebra} of $H$.

\begin{prop}
Let $A := KG$ and $A_1 := KG_1$. Then $(A,A_1)$ is a
Frobenius pair.
\end{prop}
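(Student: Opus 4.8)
The goal is to verify the four axioms in Definition~\ref{FrobPair} for the pair $(A,A_1) = (KG, KG_1)$, where $G = \exp(L)$ is uniform, $G_1 = \exp(L_1)$ with $pL \subseteq L_1 \subseteq L$, and $A,A_1$ carry the $J$-adic filtration coming from the augmentation ideal $J$ of $K[G]$. The plan is to combine the structural results of \S\ref{GpAlg} and \S\ref{GrKG1} with a limit/completeness argument. First I would recall that $A = KG$ is the $J$-adic completion of $K[G]$, so that by Proposition~\ref{GpAlg} the associated graded ring $\gr A$ is the polynomial ring $B := K[y_1,\ldots,y_d]$, a commutative noetherian domain; this gives axiom (ii) and fixes $B = \gr A$. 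The filtration on $A$ is separated and complete by construction.

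\textbf{Axiom (i): $A_1$ is closed in $A$.} Here I would identify $A_1 = KG_1$ with the closure of $K[\mathcal{N}]$ inside $A$: the $K$-span $K[\mathcal{N}]$ sits in $K[G_1] \subseteq KG_1$ and is dense in $K[G_1]$ for the $J$-adic topology by Lemma~\ref{GrKG1}(a), hence dense in $KG_1$; and $KG_1$ is itself complete, being a completed group algebra. So $A_1$ is a closed subalgebra. The equivalent reformulation in the Remark after Definition~\ref{FrobPair} — that the subspace filtration $F_nA_1 = F_nA \cap A_1$ on $A_1$ is complete — then holds automatically.

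\textbf{Axioms (iii) and (iv): the graded structure.} The key input is Lemma~\ref{GrKG1}(c): the image $B_1$ of $\gr A_1$ in $B$ is exactly $K[y_1^p,\ldots,y_t^p,y_{t+1},\ldots,y_d]$. (One must check that $\gr K[\mathcal{N}]$ and $\gr KG_1$ have the same image in $\gr A$ — this follows because $K[\mathcal{N}]$ is dense in $A_1$ and $A_1$ is closed, using Lemma~\ref{GrKG1}(b) to control the filtration degrees, so passing to the completion does not enlarge the graded image.) From the explicit description of $B_1$ it is immediate that $B^{[p]} = K[y_1^p,\ldots,y_d^p] \subseteq B_1$, giving axiom (iii). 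For axiom (iv), set $y_i := y_i$ for $i = 1,\ldots,t$ (the variables that are \emph{not} $p$-th powers in $B_1$); then the monomials $\mathbf{y}^\alpha = y_1^{\alpha_1}\cdots y_t^{\alpha_t}$ with $\alpha \in [p-1]^t$ form a $B_1$-module basis of $B$, since $K[y_1,\ldots,y_d]$ is free over $K[y_1^p,\ldots,y_t^p,y_{t+1},\ldots,y_d]$ with exactly this basis. All of $y_1,\ldots,y_t$ are homogeneous of degree $1$, as required.

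\textbf{Main obstacle.} The routine verifications are axioms (ii), (iii), (iv), which are essentially algebraic bookkeeping on the polynomial ring $B$ once Lemma~\ref{GrKG1}(c) is invoked. The subtler point — and the one I would spend the most care on — is axiom (i), specifically the passage from the \emph{dense} subalgebra $K[\mathcal{N}]$ to the \emph{closed} subalgebra $KG_1$ and the compatibility of filtrations: one needs that the subspace filtration on $KG_1$ induced from $KG$ agrees with its own $J_1$-adic (or intrinsic $J$-adic) filtration well enough that $\gr KG_1 \to \gr KG$ has image precisely $B_1$, with no loss or gain in any degree. This is where Lemma~\ref{GrKG1}(b), $K[\mathcal{N}] \cap J^n = K[\mathcal{N}\cap\mathcal{M}_{\geq n}]$, does the essential work, ensuring the filtration behaves predictably on the dense subalgebra and hence, by completeness, on all of $A_1$. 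Once that compatibility is nailed down, the four axioms follow and $(KG, KG_1)$ is a Frobenius pair.
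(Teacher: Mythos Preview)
Your outline is correct and matches the paper's approach for axioms (ii), (iii), (iv): the paper also invokes Proposition~\ref{GpAlg} for $\gr A \cong K[y_1,\ldots,y_d]$ and Lemma~\ref{GrKG1} to identify $B_1 = K[y_1^p,\ldots,y_t^p,y_{t+1},\ldots,y_d]$, from which (iii) and (iv) are immediate exactly as you say.

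The one point where you diverge from the paper is axiom (i). You argue ``$KG_1$ is complete, hence closed'' and defer the topology compatibility to Lemma~\ref{GrKG1}(b). The paper instead proves the compatibility directly at the level of group algebras: for each open normal $N \lhd G$ one has $w_{N,G} \cap K[G_1] = w_{N\cap G_1, G_1}$, and conversely every $w_{N_1,G_1}$ contains some $w_{N,G}\cap K[G_1]$. This shows the subspace topology on $K[G_1]$ induced from the $J$-adic topology on $K[G]$ coincides with the intrinsic inverse-limit topology on $K[G_1]$, so the closure of $K[G_1]$ in $A$ is exactly $KG_1$. Your route via Lemma~\ref{GrKG1}(b) can be made to work --- it pins down the subspace filtration on $K[\mathcal N]$ degree by degree, and then completeness of $A$ identifies the closure --- but as written the step ``complete $\Rightarrow$ closed'' is a non-sequitur without first knowing that the two topologies on $K[G_1]$ agree. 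The paper's $w_{N,G}$ argument gives this cleanly and also makes the injectivity of $KG_1 \to KG$ transparent. You have correctly flagged this as the main obstacle; the paper's resolution is just more direct than the one you sketched.
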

\begin{proof}
For each open normal subgroup $N$ of $G$, let $w_{N,G}$ be
the kernel of the natural map from $K[G]$ to $K[G/N]$. By
the proof of \cite[Lemma 7.1]{DDMS}, this family of ideals
of $K[G]$ is cofinal with the powers of the augmentation
ideal $J = w_{G,G}$. Therefore $A$ is isomorphic to the
completion of $K[G]$ with respect to the $J$-adic filtration
on $K[G]$. Let $(F_nA)$ be the associated filtration on $A$; explicitly,
\[F_nA := \left\{ \begin{array}{l}\overline{J^{-n}} \quad \mbox{ if }
\quad n \leq 0 \\ A \quad \mbox{ otherwise.} \end{array} \right. \]
In this way, $A$ becomes a complete filtered $K$-algebra, and
\[B:=\gr A \cong \gr K[G] \cong K[y_1,\ldots,y_d]\]
is a commutative noetherian domain, by Proposition \ref{GpAlg}.

Now if $N$ is an open normal subgroup of $G$, then $N\cap G_1$ is an
open normal subgroup of $G_1$ and
\[w_{N,G} \cap K[G_1] = w_{N\cap G_1,G_1}.\]
Conversely, if $N_1$ is an open normal subgroup of $G_1$, then we
can find an open normal subgroup $N$ of $G$ such that $N \cap G_1
\subseteq N_1$, so that
\[w_{N_1,G_1} \supseteq w_{N \cap G_1,G_1} = w_{N,G} \cap K[G_1].\]
Hence the subspace topology on $K[G_1]$ induced from the
$J$-adic topology on $K[G]$ coincides with the natural
topology on $K[G_1]$ used in the definition of $A_1$.
We may therefore identify $A_1$ with the closure of $K[G_1]$
inside $A$. In this way $A_1$ becomes a closed subalgebra of $A$.

Finally, Lemma \ref{GrKG1} implies that the image of $\gr A_1 \cong
\gr K[G_1] \cong \gr K[\mathcal{N}]$ inside $\gr A$ can be
identified with the subalgebra $B_1 :=
K[y_1^p,\ldots,y_t^p,y_{t+1},\ldots,y_d]$ of $B$. This clearly
contains $B^{[p]}$ and moreover
\[ B = \bigoplus_{\alpha\in [p-1]^t} B_1 \mathbf{y}^\alpha,\]
as required.
\end{proof}

\subsection{Sources of derivations for Iwasawa algebras}
\label{SourcesIwa}

\begin{prop}
Let $u \in L$ be such that $[u,L] \subseteq p^kL$ and
$[u,L_1] \subseteq p^{k+1}L$ for some $k \geq \epsilon$,
and let $a = \exp(u)$. Then
\begin{enumerate}[{(}a{)}]
\item $(a, G) \subseteq G^{p^k}$,
\item $(a, G_1) \subseteq G^{p^{k+1}}$,
\item $[a,F_nA]  \subseteq  F_{n - p^k + 1}A$ for all $n\in\mathbb{Z}$, and
\item $[a,F_nA_1]  \subseteq  F_{n - p^{k+1} + p}A$ for all $n\in\mathbb{Z}$.
\end{enumerate}
\end{prop}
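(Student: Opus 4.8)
The plan is to establish parts (a) and (b) first, since they are essentially restatements of Proposition~\ref{ExpMap} (the one bounding $(\exp(u),\exp(v))$ modulo $G^{p^{k+1}}$), and then to deduce the filtration statements (c) and (d) from them by a combinatorial argument on the $J$-adic filtration. For part (a), I would apply that proposition directly with the given hypothesis $[u,L]\subseteq p^kL$, $k\geq\epsilon$: it yields $(\exp(u),\exp(v))\equiv\exp([u,v])\bmod G^{p^{k+1}}$ for all $v$, and since $[u,v]\in p^kL$ we get $\exp([u,v])\in G^{p^k}$ by Lemma~\ref{ExpMap}(b), so $(a,G)\subseteq G^{p^k}$. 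For part (b), I would apply the same proposition but now use the stronger hypothesis $[u,L_1]\subseteq p^{k+1}L$: for $v\in L_1$ we have $[u,v]\in p^{k+1}L$, hence $\exp([u,v])\in G^{p^{k+1}}$, and therefore $(a,\exp(v))\in G^{p^{k+1}}$ for all $v\in L_1$. Since $G_1=\exp(L_1)$ is topologically generated by such elements (Lemma~\ref{SubAlgGrp}) and the commutator map is continuous, $(a,G_1)\subseteq G^{p^{k+1}}$ follows, provided one notes that $G^{p^{k+1}}$ is a normal subgroup so that the set of $h$ with $(a,h)\in G^{p^{k+1}}$ is itself a closed subgroup containing the generators.

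For part (c), the key computation is that in the group algebra $K[G]$, writing $b_g:=g-1$, one has $a\cdot x - x\cdot a = x\bigl((a,x)-1\bigr)\cdot(\text{unit})$ type identities; more precisely $[a,g]=ag-ga = g\bigl(g^{-1}ag-a\bigr)=g\,a\bigl((a,g)-1\bigr)$ for $g\in G$. Since $(a,g)\in G^{p^k}$ by part (a), the element $(a,g)-1$ lies in the closure of $\overline{J^{p^k}}$ --- here I would invoke the standard fact (implicit in Corollary~\ref{GpAlg} and the proof of Proposition~\ref{GpAlg}) that $G^{p^k}-1\subseteq J^{p^k}$, so $(a,g)-1\in F_{-p^k}A$. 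Then for $g$ a topological generator, $[a,b_g]=[a,g]\in F_{-p^k}A$, and because $F_{-1}A=\overline J$ is generated as a (closed) ideal by the $b_i$, while $[a,-]$ is a continuous derivation of $A$, one propagates the bound: $[a,F_nA]\subseteq F_{n-p^k}A$. The loss of $1$ in the stated exponent $n-p^k+1$ is a book-keeping artifact coming from the fact that $b_g\in F_{-1}A$ rather than $F_0A$, i.e. $[a,F_{-1}A]\subseteq F_{-p^k}A=F_{(-1)-p^k+1}A$; one checks the general case $[a,F_nA]\subseteq F_{n-p^k+1}A$ by induction using the Leibniz rule on products of the $b_i$ together with $F_nA F_mA\subseteq F_{n+m}A$.

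For part (d), the argument is identical in structure but uses part (b) in place of part (a): for $g\in G_1$ we have $(a,g)\in G^{p^{k+1}}$, hence $(a,g)-1\in F_{-p^{k+1}}A$, giving $[a,g]\in F_{-p^{k+1}}A$ for $g\in G_1$. The subtlety is that the natural topological generators of $G_1$ are $g_1^p,\ldots,g_t^p,g_{t+1},\ldots,g_d$ (Lemma~\ref{SubAlgGrp}(c)), and the corresponding algebra generators of $\overline{\,w_{G_1,G_1}\,}$ are of the form $b_i^p$ (of degree $-p$) for $i\leq t$ and $b_i$ (of degree $-1$) for $i>t$; the worst case for the degree shift is the latter, of degree $-1$, so $[a,F_{-1}A_1]\subseteq F_{-p^{k+1}}A$ forces the general bound $[a,F_nA_1]\subseteq F_{n-p^{k+1}+p}A$ --- the $+p$ appears because, tracking carefully, the generators $b_i^p$ sit in $F_{-p}A_1$ and a single application of $[a,-]$ to them lands in $F_{-p^{k+1}}A$, i.e.\ $F_{(-p)-p^{k+1}+p}A$; the inductive step then uses $F_nA_1\subseteq F_nA$ and the derivation property. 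I expect the main obstacle to be exactly this degree-shift accounting in (c) and (d) --- making sure the constants $-p^k+1$ and $-p^{k+1}+p$ come out correctly requires carefully identifying the degrees of the chosen algebra generators of the relevant filtered pieces and verifying the induction handles products without further loss; the group-theoretic inputs (a) and (b) are straightforward consequences of results already proved.
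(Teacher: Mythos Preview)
Your proposal is correct and matches the paper's approach: parts (a) and (b) come directly from Proposition~\ref{ExpMap}, and parts (c), (d) are obtained by writing $[a,g]=ga((a,g)-1)$, using (a), (b) to place $(a,g)-1$ in $J^{p^k}$ (resp.\ $J^{p^{k+1}}$), and then propagating via the Leibniz rule over the monomial bases $\mathcal{M}$ and $\mathcal{N}$ before passing to closures. Two small corrections: for (b) no topological-generation argument is needed, since $G_1=\exp(L_1)$ as a \emph{set} and Proposition~\ref{ExpMap} already applies to every $\exp(v)$ with $v\in L_1$; and in (d) you have the bottleneck reversed --- it is the generators $b_i^p$ (degree $-p$, shift $p^{k+1}-p$) that force the weaker constant $+p$, while the $b_i$ with $i>t$ give the \emph{larger} shift $p^{k+1}-1$, though you do arrive at the right computation in your next sentence.
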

\begin{proof} Parts (a) and (b) follow from Proposition \ref{ExpMap}:
\[\begin{array}{clclcll}
(a,G) &=& (\exp(u),\exp(L)) &\subseteq& \exp([u,L])G^{p^{k+1}} &
\subseteq& G^{p^k} \quad\mbox{and} \\

(a,G_1) &=& (\exp(u),\exp(L_1)) & \subseteq &
\exp([u,L_1])G^{p^{k+1}} &=& G^{p^{k+1}}.
\end{array}\]
(c) It is sufficient to prove this for non-positive values of $n$, since then
\[[a,F_nA] = [a,F_0A] \subseteq F_{-p^k+1}A\subseteq F_{n - p^k+1}A\]
for all $n \geq 0$.  Let $h \in G$ and set $b:=h-1$. Then
\[[a, b] = [a,h] = ha((a,h) - 1) \in K[G](G^{p^k} - 1) \subseteq J^{p^k}\]
by (a), so by induction we have
\[[a,b^m] = b[a,b^{m - 1}] + [a,b]b^{m-1} \in J^{p^k + m - 1}\]
for all $m \geq 0$. Therefore
\[ [a, \mathbf{b}^\alpha] = [a,b_1^{\alpha_1}]b_2^{\alpha_2}\cdots
b_d^{\alpha_d} + \cdots + b_1^{\alpha_1} \cdots b_{d-1}^{\alpha_{d-1}}
[a,b_d^{\alpha_d}] \in J^{|\alpha| + p^k - 1}\]
for all $\mathbf{b}^\alpha\in\mathcal{M}$. Now
$K[\mathcal{M}_{\geq -n}]$ is dense in $F_nA$, so
\[ [a,F_nA] = \overline{[g,K[\mathcal{M}_{\geq -n}]]} \subseteq
\overline{J^{-n + p^k - 1}} = F_{n - p^k + 1}A,\]
as required.

(d) Again, we may assume that $n \leq 0$. Let $h \in G_1$ and set $b
= h-1$. Then
\[[a, b] = [a,h] = ha((a,h) - 1) \in K[G](G^{p^{k+1}} - 1) \subseteq J^{p^{k+1}}\]
by (b). Hence in the notation of $\S\ref{GrKG1}$, $[a,b_i^{pm}] \in
J^{pm + p^{k+1} - p}$ for all $i \leq t$ and $[a, b_i^m] \in
J^{m + p^{k+1} - 1} \subseteq J^{m + p^{k+1} - p}$ for all $i > t$,
whenever $m\geq 0$. We can now deduce as in part (a) that
\[[a,\mathbf{b}^\alpha] \in J^{|\alpha| + p^{k+1} - p}\]
for all $\mathbf{b}^\alpha \in \mathcal{N}$, or equivalently,
$[a, \mathcal{N} \cap \mathcal{M}_{\geq -n}] \subseteq J^{-n + p^{k+1}-p}$.
Part (d) now follows because $K[\mathcal{N} \cap \mathcal{M}_{\geq -n}]$
is dense in $F_nA_1$ by Lemma \ref{GrKG1}.
\end{proof}

\begin{cor} Let $u \in L$ be such that $[u,L] \subseteq p^kL$ and $[u,L_1]
\subseteq p^{k+1}L$ for some $k \geq \epsilon$, and let $a = \exp(u) \in G$.
Then $\mathbf{a} = \{a,a^p,a^{p^2}, \ldots\}$ is a source of derivations
for the Frobenius pair $(A,A_1)$.
\end{cor}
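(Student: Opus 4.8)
The plan is to unwind the definition of a source of derivations and reduce everything to Proposition \ref{SourcesIwa} applied to the scaled elements $p^ru$. First I would note that by Lemma \ref{ExpMap}(a) the $r$-th member of $\mathbf{a}$ is $a_r = a^{p^r} = \exp(u)^{p^r} = \exp(p^ru)$, so it suffices to control the commutators $[\exp(p^ru), F_nA]$ and $[\exp(p^ru), F_nA_1]$ for all $n$.

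Next, set $u_r := p^ru$. Since $[u,L] \subseteq p^kL$ and $[u,L_1] \subseteq p^{k+1}L$, multiplying by $p^r$ gives $[u_r,L] \subseteq p^{k+r}L$ and $[u_r,L_1] \subseteq p^{k+r+1}L$, while $k+r \geq k \geq \epsilon$. Hence Proposition \ref{SourcesIwa} applies to $u_r$ with the role of $k$ played by $k+r$, and parts (c) and (d) of that Proposition yield
\[[a_r, F_nA] \subseteq F_{n - p^{k+r} + 1}A \quad\text{and}\quad [a_r, F_nA_1] \subseteq F_{n - p^{k+r+1} + p}A\]
for all $n\in\mathbb{Z}$ and all $r\geq 0$.

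It then remains to package this as a source of derivations. I would define $\theta,\theta_1:\mathbf{a}\to\mathbb{N}$ by $\theta(a_r) := p^{k+r}-1$ and $\theta_1(a_r) := p^{k+r+1}-p$; these are non-negative since $k+r \geq 1$. Axioms (i) and (ii) in the definition of a source of derivations are then precisely the two displayed inclusions above, and for axiom (iii) one computes
\[\theta_1(a_r) - \theta(a_r) = (p^{k+r+1}-p) - (p^{k+r}-1) = (p-1)(p^{k+r}-1),\]
which tends to infinity with $r$ because $p\geq 2$. This verifies all three axioms, so $\mathbf{a}$ is a source of derivations for $(A,A_1)$.

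There is essentially no serious obstacle here: all the hard work was already carried out in Proposition \ref{SourcesIwa}, and the corollary amounts to the observation that its hypotheses are stable under replacing $u$ by $p^ru$ (with $k$ shifting to $k+r$), together with the elementary estimate showing that the gap $\theta_1(a_r)-\theta(a_r)$ diverges. The only point needing a moment's care is checking that $p^ru$ still satisfies the constraint $k+r \geq \epsilon$ required to invoke Proposition \ref{SourcesIwa}, which is immediate.
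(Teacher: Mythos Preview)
Your proof is correct and follows essentially the same approach as the paper: both reduce to applying Proposition \ref{SourcesIwa} to $p^ru$ with $k$ replaced by $k+r$, and both take $\theta(a_r)=p^{k+r}-1$ and $\theta_1(a_r)=p^{k+r+1}-p$ (the paper writes the latter as $p\theta(a_r)$). Your write-up simply makes explicit the identification $a^{p^r}=\exp(p^ru)$ via Lemma \ref{ExpMap}(a) and the verification that $\theta_1-\theta\to\infty$, which the paper leaves implicit.
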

\begin{proof} For all $r \geq 0$, $[p^ru,L] \subseteq p^{r+k}L$ and
$[p^ru,L_1] \subseteq p^{r+k+1}L$. Now let $\theta(a^{p^r}) = p^{r+k} - 1$
and $\theta_1(a^{p^r}) = p\theta(a^{p^r})$ and apply the proposition.
\end{proof}

In particular, if $G$ is a uniform pro-$p$ group and $g \in G$, then
$g = \exp(u)$ for some $u\in L$. Since $L$ is powerful, $[u,L] \subseteq
p^\epsilon L$ and $[u,pL] \subseteq p^{\epsilon + 1}L$. Hence
$(g, g^p, g^{p^2}, \ldots)$ is always a source of derivations for $(KG, KG^p)$.

\subsection{Computing the corresponding derivations}
\label{IwaDers}
Let $u\in L$ be such that for some $k \geq
\epsilon$, we have
\begin{itemize}
\item $[u,L]\subseteq p^kL$
\item $[u,L]\nsubseteq p^{k+1}L$, and
\item $[u,L_1]\subseteq p^{k+1}L$.
\end{itemize}
Note that if such a $k$ exists, then it is uniquely determined by
$u$. Moreover, if $L_1 = pL$, then the third condition automatically
follows from the first, and in this case such an integer $k$ always
exists for any non-central element $u$ of $L$.

We can now define a well-defined non-zero $\Fp$-linear map
\[\begin{array}{lllc}
 \rho_u :& L/L_1 &\to& L/pL \\
  & v + L_1 &\mapsto& \frac{1}{p^k}[u,v] + pL.
\end{array}\]
Let $a = \exp(u)$. Since $[a, F_nA] \subseteq F_{n - p^k + 1}A$ for
all $n\in\mathbb{Z}$ by Proposition \ref{SourcesIwa}(c),
$u$ induces a derivation
\[D_u := \{a,-\}_{p^k - 1}\]
of $B = K[y_1,\ldots,y_d]$ as in \S\ref{Source}. It turns out that
there is a very close connection between $D_u$ and $\rho_u$. Recall
from \S\ref{SubAlgGrp} that $\{v_i + L_1 : 1 \leq i \leq t\}$ is an
$\Fp$-basis for $L/L_1$, and $\{v_i + pL : 1 \leq i \leq d\}$ is an
$\Fp$-basis for $L/pL$.

\begin{thm} Let $(c_{ij})$ be the matrix of $\rho_u$ with
respect to these bases. Then
\[D_u(y_j) = \sum_{i=1}^d c_{ij} y_i^{p^k}\]
for all $j=1,\ldots, t$.
\end{thm}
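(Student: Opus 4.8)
The plan is to compute $D_u(y_j)$ directly from the definition of the induced derivation $\{a,-\}_{p^k-1}$ by evaluating the commutator $[a, b_j]$ in $K[G]$ modulo the next filtration layer, where $b_j = g_j - 1$ and $a = \exp(u)$. Recall from \S\ref{GpAlg} that $y_j = \gr b_j = b_j + J^2$, so by the recipe in \S\ref{Source} we have
\[
D_u(y_j) = \{a,-\}_{p^k-1}(b_j + J^2) = [a,b_j] + J^{p^k+1} \in J^{p^k}/J^{p^k+1}.
\]
Thus everything comes down to identifying the principal symbol of $[a,b_j] = [a, g_j] = g_j a\bigl((a,g_j)-1\bigr)$, where $(a,g_j)$ is the group commutator. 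Since $g_j a$ is a unit congruent to $1$ modulo $J$, multiplying by it does not change the symbol of an element of $J^{p^k}$, so it suffices to compute the symbol of $(a,g_j) - 1$.

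The key input is Proposition \ref{ExpMap}, which gives
\[
(a,g_j) = (\exp(u),\exp(v_j)) \equiv \exp([u,v_j]) \mod G^{p^{k+1}}.
\]
By hypothesis $[u,v_j] \in p^k L$, so write $[u,v_j] = p^k z_j$ for some $z_j \in L$; by definition of $\rho_u$, we have $z_j \equiv \sum_{i=1}^d c_{ij} v_i \mod pL$. Now I would use Lemma \ref{ExpMap}(a,b) to write $\exp(p^k z_j) = \exp(z_j)^{p^k}$, and express $\exp(z_j)$ in terms of the topological generators: reducing modulo $G^p$ and using Lemma \ref{ExpMap}(d), $\exp(z_j) \equiv g_1^{c_{1j}}\cdots g_d^{c_{dj}} \mod G^p$. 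The heart of the calculation is then to show that raising to the $p^k$-th power and subtracting $1$ produces, modulo $J^{p^k+1}$, exactly the element $\sum_i c_{ij} y_i^{p^k}$. For this one expands $(1 + x)^{p^k} - 1 \equiv x^{p^k} \mod (\text{higher terms} + p)$ in characteristic $p$, applied to $x = g_i - 1 = b_i$, giving $g_i^{p^k} - 1 \equiv b_i^{p^k} \mod J^{p^k+1}$; and one checks that the cross terms arising from the product $g_1^{c_{1j}}\cdots g_d^{c_{dj}}$, after raising to the $p^k$ power, all land in $J^{p^k+1}$ because the $b_i$ commute modulo $J^3$ and $p^k \geq 2$. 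The correction terms coming from $G^{p^{k+1}}$ contribute elements of $J^{p^{k+1}} \subseteq J^{p^k+1}$, hence are negligible. Combining, $(a,g_j) - 1 \equiv \sum_{i=1}^d c_{ij} b_i^{p^k} \mod J^{p^k+1}$, which gives the claimed formula for $D_u(y_j)$ upon passing to symbols.

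The main obstacle I anticipate is the bookkeeping in the expansion of $\bigl(g_1^{c_{1j}}\cdots g_d^{c_{dj}}\bigr)^{p^k} - 1$ modulo $J^{p^k+1}$: one must verify that all mixed monomials in the $b_i$ of total degree $\leq p^k$ other than the pure powers $b_i^{p^k}$ either vanish in characteristic $p$ (via the Freshman's dream applied layer by layer, using that the $b_i$ commute modulo $J^3$) or are absorbed into $J^{p^k+1}$, and that the linear coefficients $c_{ij}$ survive this process correctly. A clean way to organize this is to first establish the auxiliary congruence $h^{p^k} - 1 \equiv (h-1)^{p^k} \mod J^{p^k+1}$ for each $h \in G$ (using $k \geq \epsilon \geq 1$ so $p^k \geq p \geq 2$), then handle the product via $\bigl(\prod h_i\bigr) - 1 \equiv \sum (h_i - 1) \mod J^2$ and raise to the $p^k$ power, noting that any term involving two distinct factors contributes degree $\geq p^k + 1$ after the Frobenius. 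Everything else — the identification of $\rho_u$'s matrix, the passage between $\exp$ and generators, the stability under the unit $g_j a$ — is routine given Propositions \ref{ExpMap} and \ref{SourcesIwa} and the structure results of \S\ref{GpAlg}.
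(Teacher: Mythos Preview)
Your proposal is correct and follows essentially the same route as the paper's proof. The paper streamlines the step you flag as the main obstacle by invoking \cite[Theorem 3.6(iv)]{DDMS}, which says that the $p^k$-power map $G/G^p \to G^{p^k}/G^{p^{k+1}}$ is a group isomorphism; this converts your congruence $\exp(z_j) \equiv \prod_i g_i^{\lambda_{ij}} \bmod G^p$ directly into $\exp([u,v_j]) \equiv \prod_i g_i^{p^k\lambda_{ij}} \bmod G^{p^{k+1}}$ at the group level, so that in $K[G]$ one only needs to expand $\prod_i (1+b_i^{p^k})^{\lambda_{ij}} - 1$, where all cross terms lie in $J^{2p^k} \subseteq J^{p^k+1}$ and the bookkeeping you anticipated evaporates.
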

\begin{proof} Choose $\lambda_{ij} \in [p-1]$ such that $c_{ij}$
is the reduction of $\lambda_{ij}$ modulo $p$. By the
definition of $c_{ij}$,
\[\frac{1}{p^k}[u,v_j] \equiv \sum_{i=1}^d \lambda_{ij} v_i \mod pL\]
for all $j=1,\ldots, t$.  Recall from \S\ref{SubAlgGrp} that
$g_i = \exp(v_i)$ for all $i$. By Lemma \ref{ExpMap}(d),
\[\exp\left(\frac{1}{p^k}[u,v_j]\right) \equiv \prod_{i=1}^d g_i^{\lambda_{ij}} \mod G^p.\]
Now by \cite[Theorem 3.6(iv)]{DDMS}, $g \mapsto g^{p^k}$
induces an isomorphism between $G/G^p$ and $G^{p^k}/G^{p^{k+1}}$.
Using Lemma \ref{ExpMap}(a), we see that
\[\exp([u,v_j]) = \exp\left(\frac{1}{p^k}[u,v_j]\right)^{p^k}
\equiv \prod_{i=1}^d g_i^{p^k\lambda_{ij}} \mod G^{p^{k+1}}.\]
We can now apply Proposition \ref{ExpMap} and deduce that
\[(a,g_j) \equiv \exp([u,v_j]) \equiv \prod_{i=1}^d g_i^{p^k\lambda_{ij}} \mod G^{p^{k+1}}.\]
Next, recall that $b_j = g_j - 1$ and consider the commutator
$[a,b_j]$ inside $K[G]$:
\[ [a,b_j] = [a,g_j] = g_ja((a,g_j) - 1) = g_j a \left( h_j
\prod_{i=1}^d g_i^{p^k\lambda_{ij}} - 1 \right)\]
for some $h_j \in G^{p^{k+1}}$. Since we're interested in
$\{a,-\}_{p^k-1}$, we only need to compute $[a,b_j]$ modulo
$J^{p^k + 1}$. Now $h_j - 1 \subseteq G^{p^{k+1}} - 1 \subseteq
J^{p^{k+1}} \subseteq J^{p^k + 1}$, so
\[h_j \equiv 1 \mod J^{p^k + 1}.\]
Because $g_ja \equiv 1 \mod J$, we can deduce that
\[ [a,b_j] \equiv \prod_{i=1}^d (1 + b_i^{p^k})^{\lambda_{ij}} - 1
\equiv \sum_{i=1}^d c_{ij}b_i^{p^k} \mod J^{p^k + 1}\]
for all $j=1,\ldots, t$. The result follows.
\end{proof}

\subsection{Verifying the derivation hypothesis}
\label{VerStar}
In a forthcoming paper \cite{AWZ}, we will prove the
following result.

\begin{thm}\cite[Theorem A]{AWZ}
Let $\Phi(\Zp)$ be the Chevalley $\Zp$-Lie algebra associated 
to a root system $\Phi$. 
Let $L$ be the Lie algebra $p^t\Phi(\Zp)$ for some $t\geq 1$
and $G$ be the corresponding uniform pro-$p$ group $\exp(L)$.
Suppose that $p\geq 5$ and that $p \nmid n+1$ if $\Phi$ has an 
indecomposable component of type $A_n$. Then $(KG,KG^p)$ 
satisfies the derivation hypothesis.
\end{thm}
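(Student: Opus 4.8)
The plan is to take $G_1 = G^p$ (so $L_1 = pL$) and reduce the derivation hypothesis to two ingredients: a Frobenius--Vandermonde independence statement in the polynomial ring $B = \gr KG$, and the simplicity of the mod-$p$ Chevalley Lie algebra $\mathfrak g := \Phi(\Fp)$. With this choice $B\cong K[y_1,\dots,y_d]$ ($d$ the rank of $L$), $B_1 = K[y_1^p,\dots,y_d^p]$, and $\mathcal D = \Der_{B_1}(B) = \bigoplus_{j=1}^d B\partial_j$ by Proposition~\ref{Derivations}, so that $\mathcal D(Y)\subseteq XB$ is equivalent to $\partial_j(Y)\in XB$ for every $j$. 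I would also use two harmless normalisations: (i) a linear change of basis of $L/pL$ over $\Fp$ induces a linear change of the polynomial generators $y_i$ and fixes $B_1$ (since $(\sum a_{ij}y_j)^p = \sum a_{ij}y_j^p$ for $a_{ij}\in\Fp$); and (ii) factoring $X = \prod_l\pi_l^{e_l}$ in the UFD $B$, one has $XB = \bigcap_l\pi_l^{e_l}B$, so it suffices to prove $\partial_j(Y)\in\pi^e B$ for each prime power $\pi^e$ exactly dividing $X$, and by (i) one may assume $\pi = y_1$ whenever $\deg\pi = 1$.

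First I would pin down the derivations produced by sources. For $\xi\in\Phi(\Zp)$ with nonzero image in $\mathfrak g$, a short valuation computation in the powerful Lie algebra $L = p^t\Phi(\Zp)$ shows that $p^r u$, where $u := p^t\xi$, satisfies the hypotheses of \S\ref{IwaDers} with $k = t+r$, and that $\rho_{p^r u} = \ad(\bar\xi)$ under $L/pL\cong\mathfrak g$. By Corollary~\ref{SourcesIwa}, $\mathbf a_u = \{\exp(u),\exp(u)^p,\dots\}$ is a source of derivations for $(KG,KG^p)$, and the Theorem of \S\ref{IwaDers} identifies the graded derivation of $B$ induced by $\exp(u)^{p^r}$ with $D_{C,p^{t+r}} := \sum_{i,j}C_{ij}\,y_i^{p^{t+r}}\partial_j$, where $C := \ad(\bar\xi)$. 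Since every element of $\ad(\mathfrak g)$ arises as such a $C$, the antecedent of the derivation hypothesis (applied to the sources $\mathbf a_u$) yields, writing $Q_j := \partial_j(Y)$,
\[
\sum_{i=1}^d y_i^{p^m}\Bigl(\sum_{j=1}^d C_{ij}Q_j\Bigr)\ \in\ XB
\qquad\text{for all }C\in\ad(\mathfrak g)\text{ and all }m\gg 0.
\]
The task is to deduce $Q_j\in XB$ for all $j$.

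Fix $C$ and put $R_i := \sum_j C_{ij}Q_j$. If $\deg\pi\ge 2$, the images $\bar y_1,\dots,\bar y_d$ of the variables in $\Frac(B/\pi B)$ are $\Fp$-linearly independent, so the Moore (Frobenius--Vandermonde) matrix $\bigl(\bar y_i^{\,p^{m+l}}\bigr)_{0\le l\le d-1}$ is invertible; using the displayed relation for $m,\dots,m+d-1$ and the $\pi$-adic valuation on the discrete valuation ring $B_{(\pi)}$ one gets $R_i\in\pi^e B$ for all $i$. Reducing all the relations $\sum_j C_{ij}Q_j\in\pi^e B$ ($C\in\ad(\mathfrak g)$) modulo $\pi^e B$, the vector $(\bar Q_j)_j$ lies in the common kernel of all $\ad(\xi)$ acting on $\mathfrak g\otimes_{\bar K}(B/\pi^e B)$, which equals $(\bigcap_\xi\ker\ad\xi)\otimes(B/\pi^e B) = \mathfrak z(\mathfrak g)\otimes(B/\pi^e B) = 0$ (a finite intersection, as $\ad$ is linear), so $Q_j\in\pi^e B$. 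If $\deg\pi = 1$, say $\pi = y_1$ and $X = y_1^e$, the $(d-1)$-variable Moore argument over $B' := K[y_2,\dots,y_d]$ only gives $R_i\in y_1^e B$ for $i\ge 2$, hence modulo $y_1$ the vector $\bar Q\in(B')^d$ satisfies $C\bar Q\in B'e_1$ for every $C\in\ad(\mathfrak g)$. Here I would invoke the main Lie-theoretic input: for the simple Lie algebra $\mathfrak g$ and any nonzero $x\in\mathfrak g$, $\{w\in\mathfrak g : [\mathfrak g,w]\subseteq\bar K x\} = 0$ (equivalently: $\ad(w)$ has rank $\ge 2$ for all $w\ne 0$; $\mathfrak g$ has no $1$-dimensional ideal and no element with centraliser of codimension $\le 1$), which forces $\bar Q = 0$, i.e.\ $Q_j\in y_1 B$; a descending induction on $e$, alternating this lemma with the Moore step applied to $y_1^{-k}R_i$, then gives $Q_j\in y_1^e B = XB$. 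Assembling the cases over the prime factorisation of $X$ gives $\mathcal D(Y)\subseteq XB$, which is the derivation hypothesis.

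The main obstacle is exactly the last step: verifying that $\mathfrak g = \Phi(\Fp)$ is simple with trivial centre and contains no element of adjoint rank $1$, and --- crucially --- identifying the primes for which this holds. This is where the hypotheses $p\ge 5$ and $p\nmid n+1$ for components of type $A_n$ are spent: when $p\mid n+1$ the scalar matrices form a nonzero ($1$-dimensional) centre of $\mathfrak{sl}_{n+1}(\Fp)$, wrecking the vanishing $\mathfrak z(\mathfrak g) = 0$, and for $p\in\{2,3\}$ several Chevalley Lie algebras fail to be simple; in every such case the reduction above breaks down. The rest --- the valuation bookkeeping of the source computation, the Moore determinant, the $\GL_d(\Fp)$ normalisations --- is routine, so the substance of the proof lies in the modular representation theory of simple Lie algebras.
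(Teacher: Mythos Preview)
The paper does not contain a proof of this theorem: in \S\ref{VerStar} it is merely announced, with a forward citation to the forthcoming paper \cite{AWZ}, and only the special case $\Phi=A_1$ is worked out here (the Proposition in \S\ref{CongSubSL2}). There is therefore no argument in the present paper against which to compare your proposal.

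That said, your outline is the natural generalisation of the $\mathfrak{sl}_2$ computation in \S\ref{CongSubSL2}: identify the derivations $D_{p^ru}$ via Theorem~\ref{IwaDers} as $\sum_{i,j}c_{ij}y_i^{p^{t+r}}\partial_j$ with $C=(c_{ij})$ the matrix of $\ad(\bar\xi)$ on $\mathfrak g=\Phi(\Fp)$, obtain relations $\sum_i y_i^{p^m}R_i\in XB$ for all large $m$, and eliminate. Two remarks. First, your case split on $\deg\pi$ is slightly miscalibrated when $K\ne\Fp$: the Moore determinant in $B_{(\pi)}$ is a product of $p^m$-th powers of $\Fp$-linear forms in the $y_i$, so it is a unit in $B_{(\pi)}$ (and the Moore step gives $R_i\in\pi^eB$ for all $i$ directly) unless $\pi$ is a $K^\times$-multiple of an $\Fp$-rational linear form; in that residual case your $\GL_d(\Fp)$ normalisation does send $\pi B$ to $y_1B$ and the rest of your argument applies. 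Second --- as you yourself flag --- the assertion that under the hypotheses $p\ge 5$ and $p\nmid n+1$ for $A_n$-components the Lie algebra $\Phi(\Fp)$ has trivial centre and admits no nonzero element $w$ with $[\mathfrak g,w]$ one-dimensional is exactly where the restrictions on $p$ are consumed; this is the substantive Lie-theoretic input, it is not proved in the present paper, and your sketch correctly isolates it as the content deferred to \cite{AWZ}.
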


For the time being, we only verify that the derivation hypothesis 
holds in the special case when $G$ is a congruence subgroup of
$\SL_2(\Zp)$.

\subsection{Congruence subgroups of $\SL_2(\Zp)$, $p\geq 3$}
\label{CongSubSL2}
Fix an integer $l\geq 1$ and let $L$ be the powerful Lie algebra
$\mathfrak{sl}_2(p^l \Zp)$. Thus $L$ has a basis
\[\left\{e=\begin{pmatrix} 0& p^l\\0&0\end{pmatrix},
f=\begin{pmatrix} 0& 0\\p^l&0\end{pmatrix},
h=\begin{pmatrix} p^l& 0\\0&-p^l\end{pmatrix}\right\}\]
satisfying the following relations:
\begin{itemize}
\item $[h,e] = 2p^l e$,
\item $[h,f] = -2p^l f$,
\item $[e,f] = p^l h$.
\end{itemize}
Let $\Gamma_l(\SL_2(\Zp))$ denote the \emph{l-th congruence subgroup}
of $\SL_2(\Zp)$:
\[\Gamma_l(\SL_2(\Zp)) := \ker\left(\SL_2(\Zp) \to \SL_2(\mathbb{Z}/p^l\mathbb{Z})\right).\]
It is well known that $G:=\exp(L)$ is isomorphic to $\Gamma_l(\SL_2(\Zp))$.
We let $L_1 = pL$, so that the corresponding subgroup $G_1$ is
just $G^p \cong \Gamma_{l+1}(\SL_2(\Zp))$.

We use the same variables $\{e,f,h\}$ for the generators of the
associated graded ring $B = \gr \FFp G$ and
hope that this will cause no confusion. Thus
\[ B = K[e,f,h] \quad\mbox{and}\quad B_1 = K[e^p,f^p,h^p].\]
Let $\{\frac{\partial}{\partial e}, \frac{\partial}{\partial f},
\frac{\partial}{\partial h} \}$ be the corresponding derivations,
which were constructed in \S\ref{Derivations}. Using Theorem
\ref{IwaDers} and Proposition \ref{Derivations}(b), we can
write down the derivations
\[D_{p^ru} = \{\exp(p^ru),-\}_{p^{r+l} - 1} : B \to B\]
generated by $u$ explicitly, for each $u\in \{e,f,h\}$:
\[\begin{array}{lll}
D_{p^re} &=& h^{p^{l+r}} \frac{\partial}{\partial f}
- 2e^{p^{l+r}} \frac{\partial}{\partial h}, \\
D_{p^rf} &=& -h^{p^{l+r}} \frac{\partial}{\partial e}
+ 2f^{p^{l+r}} \frac{\partial}{\partial h}. \\
D_{p^rh} &=& 2e^{p^{l+r}} \frac{\partial}{\partial e}
- 2f^{p^{l+r}} \frac{\partial}{\partial f}. \\
\end{array}
\]

\begin{prop} Let $l \geq 1$ and let $G = \exp(\mathfrak{sl}_2(p^l\Zp))$
as above. Then the Frobenius pair $(KG, KG^p)$
satisfies the derivation hypothesis.
\end{prop}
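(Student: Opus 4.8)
The plan is to reduce the derivation hypothesis for $(KG,KG^p)$ to an elementary divisibility statement in the polynomial ring $B=K[e,f,h]$, and then settle that statement by induction on the number of irreducible factors of $X$. By the corollary of \S\ref{SourcesIwa}, applied with $L=\mathfrak{sl}_2(p^l\Zp)$, $L_1=pL$ and $k=l\ge 1=\epsilon$, each of $\mathbf a_e:=\{\exp(e),\exp(pe),\exp(p^2e),\dots\}$, $\mathbf a_f$ and $\mathbf a_h$ is a source of derivations for $(KG,KG^p)$, and the graded derivations of $B$ they induce are exactly the operators $D_{p^re},D_{p^rf},D_{p^rh}$ displayed just before the statement. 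Hence, if a homogeneous $Y\in B$ lies in the $\mathbf a$-closure of $XB$ for \emph{every} source, then in particular $D_{p^re}(Y),D_{p^rf}(Y),D_{p^rh}(Y)\in XB$ for all $r\gg 0$. Since $\mathcal D=B\tfrac{\partial}{\partial e}\oplus B\tfrac{\partial}{\partial f}\oplus B\tfrac{\partial}{\partial h}$ by Proposition \ref{Derivations}(b), the desired conclusion $\mathcal D(Y)\subseteq XB$ is equivalent to $\tfrac{\partial Y}{\partial e},\tfrac{\partial Y}{\partial f},\tfrac{\partial Y}{\partial h}\in XB$. Writing $P=\tfrac{\partial Y}{\partial e}$, $Q=\tfrac{\partial Y}{\partial f}$, $R=\tfrac{\partial Y}{\partial h}$ (homogeneous of degree $\deg Y-1$) and using that $2\in K^{\times}$ because $p\ge 3$, the hypotheses become
\[ h^N Q-2e^N R,\qquad h^N P-2f^N R,\qquad e^N P-f^N Q\ \in\ XB\qquad(N=p^{l+r},\ r\gg 0),\]
so it suffices to prove: \textbf{if $P,Q,R\in B$ are homogeneous of a common degree, $0\ne X\in B$ is homogeneous, and the three displayed containments hold for all $r\gg 0$, then $P,Q,R\in XB$.} (The case $X=0$ is immediate as $B$ is a domain.) Crucially no ``integrability'' relation between $P,Q,R$ is needed, which is what lets the induction close.

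For the induction, argue on the number of irreducible factors of $X$ counted with multiplicity, the case $X\in K^{\times}$ being vacuous. For the inductive step, choose a homogeneous prime $\pi$ of the graded UFD $B$ dividing $X$; the key point is to show $\pi\mid P,Q,R$. Granting this, write $P=\pi P_1$, $Q=\pi Q_1$, $R=\pi R_1$, $X=\pi X_1$ and cancel $\pi$ from the three containments (legitimate since $B$ is a domain): then $(P_1,Q_1,R_1)$ satisfies the same hypotheses with $X_1$ in place of $X$, so the inductive hypothesis gives $P_1,Q_1,R_1\in X_1B$, hence $P,Q,R\in\pi X_1B=XB$.

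To show $\pi\mid P,Q,R$, reduce modulo $\pi$: $\bar B:=B/\pi B$ is a domain, finitely generated over $K$ and generated by $\bar e,\bar f,\bar h$, at most one of which vanishes. If, say, $\bar e=0$ (so $\pi$ is associate to $e$ and $\bar B=K[f,h]$): the first containment forces $h^N\bar Q=0$, hence $\bar Q=0$; the second then gives $h^N\bar P=2f^N\bar R$ in $K[f,h]$, so $f^N\mid\bar P$ as $f,h$ are coprime, which forces $\bar P=0$ once $N>\deg Y-1$, and then $\bar R=0$; the cases $\pi\sim f,h$ are symmetric (for $\pi\sim h$ use $B/hB\cong K[e,f]$ and the third containment). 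If none of $\bar e,\bar f,\bar h$ is $0$, pass to the field $\Frac(\bar B)$: the third and first containments give $\bar P=(\bar f/\bar e)^N\bar Q$ and $2\bar R=(\bar h/\bar e)^N\bar Q$, and the left-hand sides do not depend on $N$. If $\bar Q\ne 0$, then $\xi:=(\bar f/\bar e)^{p^{l+r}}$ is independent of $r$; taking two consecutive values of $r$ gives $\xi^p=\xi$, so $\xi\in\Fp^{\times}$, and then $(\bar f/\bar e-\xi)^{p^{l+r_0}}=(\bar f/\bar e)^{p^{l+r_0}}-\xi^{p^{l+r_0}}=\xi-\xi=0$, whence $\bar f=\xi\bar e$; likewise $\bar h=\eta\bar e$ with $\eta\in\Fp^{\times}$. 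Then $\pi$ divides the two non-associate linear forms $f-\xi e$ and $h-\eta e$, which is absurd. Hence $\bar Q=0$, and then $\bar P=\bar R=0$ because $\bar e\ne 0$ and $\bar B$ is a domain. In all cases $\pi\mid P,Q,R$, completing the induction and hence the proof.

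The delicate step — the one I expect to need the most care — is the last case, where $\pi$ is coprime to $e,f,h$: one must preclude ``exotic'' ratios $\bar f/\bar e,\bar h/\bar e$ in the residue field. It is precisely the characteristic-$p$ identity $x^{p^k}-y^{p^k}=(x-y)^{p^k}$ that pins these ratios into the prime field $\Fp$ (where $(p-1)$-st roots of unity live), after which the geometric fact that a prime cannot be associate to two linearly independent linear forms closes it. One should also handle the degree bound in the ``$\pi$ a coordinate'' case carefully: $P,Q,R$ have the fixed degree $\deg Y-1$, which only decreases as $\pi$'s are cancelled, while $N=p^{l+r}\to\infty$, so divisibility of a bounded-degree polynomial by $f^{N}$ forces it to vanish.
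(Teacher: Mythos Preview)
Your proof is correct, but it takes a genuinely different route from the paper's. The paper's argument is a direct elimination: from the two relations $D_{p^s e}(Y),D_{p^{s+1}e}(Y)\in XB$ one eliminates $\partial Y/\partial f$ to obtain
\[2e^{p^{s+l}}\bigl(h^{p^{s+l}(p-1)}-e^{p^{s+l}(p-1)}\bigr)\,\frac{\partial Y}{\partial h}\in XB,\]
and similarly from the $D_{p^r f}$-relations one obtains $2f^{p^{s+l}}(h^{p^{s+l}(p-1)}-f^{p^{s+l}(p-1)})\,\partial Y/\partial h\in XB$. The two coefficients are coprime in $K[e,f,h]$ (their irreducible factors are linear forms in $e,h$ respectively $f,h$), so $\partial Y/\partial h\in XB$; the other partials follow by symmetry. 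Your approach instead inducts on the number of prime factors of $X$, reducing modulo each prime $\pi$ and using the \emph{infinitely many} values of $N$ together with the Frobenius identity $(x-y)^{p^k}=x^{p^k}-y^{p^k}$ to force $\bar f/\bar e,\bar h/\bar e\in\Fp$, hence $\pi$ linear, which then contradicts divisibility of two independent linear forms. The paper's argument is considerably shorter (essentially two lines of elimination plus a coprimality observation) and uses only two values of $r$; your argument is longer and case-heavy but isolates more transparently \emph{why} the Frobenius structure matters, and your observation that no integrability relation among $P,Q,R$ is needed is exactly what makes the induction close cleanly. Both rest on the same underlying mechanism (the ``coprimality'' in the paper's proof is, prime by prime, precisely your case analysis), but the packaging is quite different.
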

\begin{proof} Let $X,Y$ be homogeneous elements of $B$ and suppose
that $Y$ lies in the $\mathbf{a}$-closure of $XB$ for all
$\mathbf{a}\in\mathcal{S}(KG,KG^p)$. By Corollary \ref{SourcesIwa},
$(g, g^p, g^{p^2}, \ldots) \in \mathcal{S}(KG,KG^p)$ for all
$g \in G$, so we can find an integer $s$ such that
\[D_{p^ru}(Y) \in XB\]
for all $u \in \{e,f,h\}$ and all $r \geq s$. Consider the
$D_{p^re}$--equations for $r = s$ and $r = s+1$. Eliminating
the terms involving $\partial Y/ \partial f$ yields that
\[2 e^{p^{s+l}} \left( h^{p^{s+l}(p-1)} - e^{p^{s+l}(p-1)} \right)
\frac{\partial Y}{\partial h} \in XB,\]
and using similar operations with the $D_{p^rf}$-equations we have
\[2 f^{p^{s+l}} \left( h^{p^{s+l}(p-1)} - f^{p^{s+l}(p-1)} \right)
\frac{\partial Y}{\partial h} \in XB.\]
The coefficients of $\partial Y/\partial h$ appearing in the above
two equations are coprime, which allows us to deduce
\[\frac{\partial Y}{\partial h} \in XB.\]
Similar manipulations with the other equations show that
$\partial Y / \partial e$ and $\partial Y /\partial f$
also lie in $XB$. Hence $\mathcal{D}(Y) \subseteq XB$,
by Proposition \ref{Derivations}(b).
\end{proof}

\section{Ideals in Iwasawa algebras}

\subsection{Canonical dimension function}
\label{CanDimFun}
Let $A$ be a noetherian ring. We say that $A$ is
\emph{Gorenstein} if it has finite injective dimension on both
sides. For any finitely generated left (or right) $A$-module $M$,
the {\it $j$-number} or \emph{grade} of $M$ is defined to be
$$j(M) := \inf\{n\;|\; \Ext^n_A(M,A)\neq 0\}.$$
The ring $A$ is called {\it Auslander-Gorenstein\/} if it is
Gorenstein and it satisfies the {\it Auslander condition}:
\begin{enumerate}
\item[]
For every finitely generated left (respectively, right)
$A$-module $M$ and every positive integer $q$, one has $j(N)
\geq q$ for every finitely generated right (respectively, left)
$A$-submodule $N\subseteq \Ext^q_A(M,A)$.
\end{enumerate}
An {\it Auslander-regular} ring is a noetherian, Auslander-Gorenstein
ring which has finite global dimension. See \cite{Bj} for
some details. Note that a noetherian commutative regular algebra
is always Auslander-regular.  For any Auslander-Gorenstein
ring $A$, there is a {\it canonical dimension function} defined by
$$\Cdim (M)=\injdim (A)-j(M)$$
for all finitely generated left (or right) $A$-modules $M$
\cite[\S 5.3]{AB1}. This is a dimension function in the
sense of \cite[\S 6.8.4]{MR}.
Recall that a finitely generated $A$-module is said to be
\emph{pure} if $\Cdim(N) = \Cdim(M)$ for all nonzero submodules
$N$ of $M$. We will use the following nice observation of
Venjakob \cite[Lemma 4.12]{CSS}:

\begin{lem} Let $A$ be an Auslander-regular domain and
$I$ be a proper nonzero right ideal of $A$. Then $I$ is reflexive
if and only if $A/I$ is pure of grade $1$.
\end{lem}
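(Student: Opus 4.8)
The plan is to characterise reflexivity of a right ideal $I$ via the purity of $A/I$, using the canonical dimension function $\Cdim$ available on the Auslander-Gorenstein ring $A$. First I would recall that for an Auslander-Gorenstein ring, the grade $j(M)$ and $\Cdim(M) = \injdim(A) - j(M)$ interact well with short exact sequences and with the $\Ext$-modules $E^j(M) := \Ext^j_A(M,A)$; in particular the Auslander condition gives $j(E^q(M)) \geq q$ for every finitely generated submodule of $E^q(M)$. Since $A$ is a domain, $j(A) = 0$ and $\Cdim(A) = \injdim(A)$, while any proper nonzero right ideal $I$ satisfies $j(A/I) \geq 1$, i.e.\ $\Cdim(A/I) \leq \injdim(A) - 1$. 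The whole statement is then an assertion comparing the condition "$A/I$ has no nonzero submodule of grade $\geq 2$" (purity of grade $1$) with the condition "$I$ is reflexive."

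The key computational input is the standard identification, for $A$ Auslander-Gorenstein and a short exact sequence $0 \to I \to A \to A/I \to 0$, of the double-dual $I^{**}$ in terms of $\Ext$-modules of $A/I$. Concretely, applying $\Hom_A(-,A)$ to this sequence and using $E^0(A) = A$, $E^1(A) = 0$ (the latter because $A$ is a domain, hence $j(A/I)\geq 1$ forces the connecting picture), one gets an exact sequence expressing $I^*$ and then, dualising again, an exact sequence of the shape
\[
0 \to A/I \big/ \tau \to I^{**}/I \to E^2(E^1(A/I)) \to 0 \quad\text{(schematically)},
\]
where $\tau$ is the largest submodule of $A/I$ of grade $\geq 2$ — this is the classical "torsion" submodule in the sense of the dimension filtration. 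Making this precise is where I would be careful: the honest statement is that $I \to I^{**}$ has kernel zero (as $A$ is a domain and $I$ is torsionfree) and cokernel controlled by $E^j(E^j(A/I))$ for $j = 1, 2$, and the Auslander condition pins down exactly which of these vanish. From this, $I = I^{**}$ if and only if $A/I$ contains no nonzero submodule $N$ with $j(N) \geq 2$, i.e.\ with $\Cdim(N) \leq \injdim(A) - 2$; since $\Cdim(A/I) = \injdim(A) - 1$ already (it cannot be larger, being a quotient of $A$ by a nonzero ideal of a domain, and it is not smaller since... one checks $j(A/I) = 1$ exactly when $I$ is not "too small"), this is precisely the statement that $A/I$ is pure of grade $1$.

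I expect the main obstacle to be bookkeeping with the $\Ext$-spectral-sequence / biduality exact sequence rather than any deep idea: one must justify that $I \hookrightarrow I^{**}$ and that $\mathrm{coker}(I \to I^{**})$ is isomorphic to a subquotient built from $E^j E^j(A/I)$, and then invoke the Auslander condition to see that the offending terms are nonzero precisely when $A/I$ has a submodule of grade $\geq 2$. This is exactly Venjakob's argument in \cite[Lemma 4.12]{CSS} adapted to the Auslander-regular domain setting, so I would largely cite that; the only thing to verify here is that $A = \Omega_G$ (or $KG$, or the microlocalisations appearing earlier) is indeed an Auslander-regular domain, which was already noted in the introduction and follows from $\gr A$ being a commutative polynomial ring together with the standard lifting results \cite{Bj}. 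Once the biduality sequence is in hand, both implications of the "if and only if" are immediate from the dimension bookkeeping.
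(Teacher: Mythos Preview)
The paper does not give a proof of this lemma at all: it simply attributes the result to Venjakob and cites \cite[Lemma 4.12]{CSS}. Your proposal does exactly the same thing --- you explicitly say you ``would largely cite that'' --- and in addition you supply a correct outline of how the biduality argument goes (identify $I^{**}$ with the reflexive closure inside $A$, show $I^{**}/I$ is a submodule of $A/I$ of grade $\ge 2$ via the long exact sequence and the Auslander condition, and conversely show that any submodule of $A/I$ of grade $\ge 2$ forces $I^{**} \supsetneq I$). So your approach coincides with the paper's, with extra detail.

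Two small clean-ups if you flesh this out: the vanishing $E^1(A)=0$ is because $A$ is free over itself, not because $A$ is a domain; and the ``schematic'' exact sequence you write is not literally correct, but the honest version (obtained by dualising $0\to A \to I^\ast \to E^1(A/I)\to 0$ once more and comparing with $0\to I\to A\to A/I\to 0$) gives exactly the control you need, namely that $I^{**}/I$ embeds in $A/I$ and has grade $\ge 2$.
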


\subsection{Crossed products}
\label{CrossProd}
Let $R$ be an Auslander-Gorenstein ring, $G$ be a finite group
and $S = R\ast G$ be a crossed product. We know by
\cite[Lemma 5.4]{AB2} that the restriction $M_{|R}$ to $R$
of any finitely generated $S$-module $M$ satisfies
\[\Cdim_S(M) = \Cdim_R(M_{|R}).\]
Hence $S$ is also Auslander-Gorenstein.

\begin{prop} $R$ has an ideal $I$ with $\Cdim_R(R/I) = n$
if and only if $S$ has an ideal $J$ with $\Cdim_S(S/J) = n$.
\end{prop}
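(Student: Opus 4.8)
The plan is to prove both directions by passing ideals through the crossed product $S = R \ast G$ and using the $\Cdim$-compatibility quoted from \cite[Lemma 5.4]{AB2}. First I would handle the direction ``$R$ has such an $I$ $\Rightarrow$ $S$ has such a $J$''. Given an ideal $I$ of $R$ with $\Cdim_R(R/I) = n$, the natural candidate is the induced ideal $J := IS = SI$ of $S$; one checks this is a genuine two-sided ideal because $G$ normalises $R$ inside $S$ and $I$ is $G$-stable — but $I$ need \emph{not} be $G$-stable in general, so the honest candidate is $J := \bigcap_{g \in G} {}^g I \cdot S$, or equivalently the largest two-sided ideal of $S$ contained in $IS$. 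Since $S$ is free of finite rank over $R$, we have $S/IS \cong (R/I) \otimes_R S$, whose restriction to $R$ is a finite direct sum of copies of the $G$-conjugates $R/{}^gI$; each such conjugate has the same $\Cdim$ as $R/I$ because conjugation by $g$ is a ring automorphism of $R$, so $\Cdim_R((S/IS)_{|R}) = n$ and hence $\Cdim_S(S/IS) = n$. Replacing $IS$ by the possibly smaller two-sided ideal $J \subseteq IS$ with $J \supseteq (\bigcap_g {}^gI) S$ pinches the dimension between $n$ (from $J \subseteq IS$, using that $\Cdim$ of a quotient can only go up as the ideal shrinks... actually one must be careful here) — so the cleanest route is to first reduce to the case that $I$ is $G$-stable.

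For that reduction I would argue as follows: among all nonzero proper ideals of $R$ with quotient of a fixed $\Cdim$ one can pass to $I' := \bigcap_{g\in G} {}^gI$, and since $R/I'$ embeds into $\bigoplus_{g} R/{}^gI$, purity and the dimension function give $\Cdim_R(R/I') = \max_g \Cdim_R(R/{}^gI) = n$; this $I'$ is $G$-stable, so $J := I'S = SI'$ is a two-sided ideal of $S$ with $(S/J)_{|R} \cong \bigoplus (R/I')^{[S:R]}$ as $R$-modules, whence $\Cdim_S(S/J) = \Cdim_R(R/I') = n$. That settles the forward direction.

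For the converse, suppose $J$ is an ideal of $S$ with $\Cdim_S(S/J) = n$. Set $I := J \cap R$, a two-sided ideal of $R$. Then $R/I$ embeds into $S/J$ as $R$-modules, so by purity of dimension functions $\Cdim_R(R/I) \leq \Cdim_S(S/J) = n$. For the reverse inequality I would use that $S$ is finitely generated as an $R$-module and that $JS = SJ = J$, together with the fact that $S/J$ is a subquotient — indeed a quotient of $(R/I)\otimes_R S$, via the surjection $(R/I)\otimes_R S \twoheadrightarrow S/J$ coming from $IS \subseteq J$. Restricting to $R$, $S/J$ is then a quotient of a finite direct sum of conjugates $R/{}^gI$, each of $\Cdim$ equal to $\Cdim_R(R/I)$, so $\Cdim_S(S/J) = \Cdim_R((S/J)_{|R}) \leq \Cdim_R(R/I)$. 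Combining the two inequalities gives $\Cdim_R(R/I) = n$.

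The main obstacle I anticipate is the interplay between $G$-stability and the dimension function: one needs to know that $\Cdim$ is preserved under conjugation (clear, since conjugation is a ring automorphism of $R$), that $\Cdim$ is additive/max-compatible on finite direct sums and does not increase under subquotients (this is exactly the content of being a dimension function in the sense of \cite[\S 6.8.4]{MR}, which is asserted in \S\ref{CanDimFun}), and that one can genuinely replace an arbitrary ideal by a $G$-stable one without losing the dimension — the subtle point being that the intersection $\bigcap_g {}^gI$ could a priori be zero, but it cannot, since each ${}^gI$ is a nonzero ideal of the domain $R$ and a finite intersection of nonzero ideals in a domain (indeed in any prime ring) is nonzero. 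Once these bookkeeping facts about dimension functions are in hand the proof is a routine double inequality, so I would expect the write-up to be short.
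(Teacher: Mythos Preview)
Your approach is essentially the same as the paper's: in the forward direction you replace $I$ by the $G$-stable intersection $I' = \bigcap_g {}^gI$ and set $J = I'S$, and in the backward direction you take $I = J \cap R$ and sandwich $\Cdim$ using the embedding $R/I \hookrightarrow (S/J)_{|R}$ and the surjection $S/IS \twoheadrightarrow S/J$. Two small clean-ups: first, the word ``purity'' is not what you want --- you are simply using that $\Cdim$ is a dimension function (exact, and $\max$ on finite direct sums); second, your worry that $\bigcap_g {}^gI$ might be zero is unnecessary (and your resolution assumes $R$ is a domain, which is not part of the hypotheses): even if $I' = 0$, the embedding $R/I' \hookrightarrow \bigoplus_g R/{}^gI$ together with the surjection $R/I' \twoheadrightarrow R/I$ still forces $\Cdim_R(R/I') = n$, so the argument goes through unchanged.
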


\begin{proof} ($\Rightarrow$) Choose a set of units
$\{\overline{g} : g\in G \}$ in $S$ such that
$R \overline{g} = \overline{g} R$ inside $S$ and
$S = \bigoplus_{g\in G} R\overline{g}$. Then
$\alpha_g : r \mapsto \overline{g}^{-1}r\overline{g}$
is an algebra automorphism of $R$. Hence
$\Cdim_R(R / \alpha_g(I)) = \Cdim_R(R/I)$
for all $g \in G$. We set $I_0 := \bigcap_{g\in G} \alpha_g(I)$,
which is a $G$-invariant ideal in $R$. It follows
from the fact $I_0 \subseteq I$
that $\Cdim_R(R/I_0) \geq \Cdim_R(R/I)$. Since
\[R/I_0 \hookrightarrow \bigoplus_{g\in G} R / \alpha_g(I),\]
we actually have equality. Let us set $J = I_0\cdot S$; where $I_0$ is
$G$-invariant, $J$ is a twosided ideal in $S$ and by construction
\[ \Cdim_S(S/J) = \Cdim_R( (S/J)_{|R} ) = \Cdim_R(R/I).\]

($\Leftarrow$) We set $I := J\cap R$, which is a $G$-invariant ideal
in $R$. Then $S/IS = \bigoplus_{g\in G}(R\overline{g}/I\overline{g})$
and hence $(S/IS)_{|R} \cong (R/I)^{|G|}$.
Since $S/IS \twoheadrightarrow S/J$, we have
\[\Cdim_R(R/I) = \Cdim_R((S/IS)_{|R}) \geq \Cdim_R((S/J)_{|R}).\]
On the other hand $R/I \hookrightarrow (S/J)_{|R}$, so
we have equality and the result follows.
\end{proof}

\subsection{Proof of Theorem A}
\label{ProofThmA} We present a slightly more general version
of Theorem A this section. Let $\mathcal{L}(G)$ 
denote the $\Qp$-Lie algebra of $G$.

\begin{thm}
Let $\FFp$ be a field of characteristic
$p$. Suppose $G$ is a compact $p$-adic analytic group of
dimension $d$ such that $\mathcal{L}(G)$ is
split semisimple over $\Qp$. Suppose that $p\geq 5$, and that $p \nmid n$ in the case when $\mathfrak{sl}_n(\Qp)$ occurs as a direct summand of $\mathcal{L}(G)$.
Then $\FFp G$ has no two-sided ideals $I$ such that
\[\Cdim_{\FFp G} (\FFp G / I) = d - 1.\]
\end{thm}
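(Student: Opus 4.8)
The plan is to reduce the general semisimple case to the case of a uniform congruence-type subgroup, where the control theorem applies, and then propagate information back up using the crossed-product machinery of \S\ref{CrossProd}. First I would translate the statement into the language of reflexive ideals: by Lemma \ref{CanDimFun} (Venjakob's observation), a nonzero proper right ideal $I$ of an Auslander-regular domain $A$ is reflexive if and only if $A/I$ is pure of grade $1$, i.e. $\Cdim(A/I)=d-1$ with purity. So the task is essentially to show that $\FFp G$ has no \emph{non-trivial reflexive two-sided ideals}; a two-sided ideal $I$ with $\Cdim_{\FFp G}(\FFp G/I)=d-1$ need not a priori be reflexive as a one-sided ideal, but one can replace $I$ by its reflexive closure $\overline I$ (still two-sided, since $\FFp G$ is a maximal order by the Remark after Theorem \ref{Main}), which has the same canonical dimension because $\overline I/I$ is pseudo-null and hence has $\Cdim\le d-2$. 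Thus it suffices to rule out nonzero reflexive two-sided ideals.

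Next I would pass to a uniform open normal subgroup. Since $G$ is compact $p$-adic analytic, it has an open normal uniform subgroup $G_0$, which we may take so that $\FFp G_0$ is a Frobenius pair base in the sense of \S\ref{IwaFrobPair}. Because $\mathcal{L}(G)$ is split semisimple, its Chevalley lattice gives, after scaling by a suitable power $p^t$, a powerful Lie algebra $L=p^t\Phi(\Zp)$ of the shape required by Theorem \ref{VerStar} (cited from \cite{AWZ}); the hypotheses $p\ge 5$ and $p\nmid n$ when $\mathfrak{sl}_n$ is a summand translate precisely into $p\nmid n+1$ when $\Phi$ has an $A_n$-component, so the derivation hypothesis holds for $(KG_0, KG_0^p)$. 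Moreover $\gr KG_0 = K[y_1,\dots,y_d]$ is a polynomial ring, hence a UFD, and so is $\gr KG_0^p = K[y_1^p,\dots,y_d^p]$; thus the full force of Theorem \ref{Main} applies to the pair $(KG_0,KG_0^p)$ and, iterating, to $(KG_0, KG_0^{p^m})$ for all $m$.

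The core argument is then the induction. Suppose $I$ is a nonzero reflexive two-sided ideal of $KG_0$. By Theorem \ref{Main}, $I$ is controlled by $KG_0^p$: $I = (I\cap KG_0^p)\cdot KG_0$, and $I\cap KG_0^p$ is again a nonzero reflexive two-sided ideal of $KG_0^p$. Repeating, $I$ descends to a nonzero reflexive two-sided ideal $I_m$ of $KG_0^{p^m}$ with $I = I_m\cdot KG_0$ for every $m\ge 0$. Now $\gr I = (\gr I_m)\cdot \gr KG_0$ sits inside $\gr KG_0^{p^m} = K[y_1^{p^m},\dots,y_d^{p^m}]$; concretely every element of $\gr I$ lies in the ideal generated by $p^m$-th powers of the variables. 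Letting $m\to\infty$ forces $\gr I = 0$, hence $I = 0$ since the filtration is separated, contradicting $I\ne 0$. Therefore $KG_0$ has no non-trivial reflexive two-sided ideals, and by the reflexive-closure argument above, no two-sided ideal $I$ with $\Cdim_{KG_0}(KG_0/I) = d-1$.

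Finally, I would bootstrap from $G_0$ to $G$. Write $G/G_0 = \Gamma$, a finite group; then $KG$ is a crossed product $KG_0 \ast \Gamma$ (the filtration and crossed-product structures are compatible since $G_0$ is normal). If $KG$ had a two-sided ideal $J$ with $\Cdim_{KG}(KG/J) = d-1$, then Proposition \ref{CrossProd} (applied with $R = KG_0$, $S = KG$) would produce a two-sided ideal of $KG_0$ with the same canonical dimension $d-1$, which we have just excluded. This completes the proof, and the case $\FFp = \Fp$ recovers Theorem A, while Theorem B follows since a nonzero normal element $w$ gives a nonzero reflexive two-sided ideal $w\,\FFp G$, forcing $w\,\FFp G = \FFp G$. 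I expect the main obstacle to be the bookkeeping in the descent step — ensuring that the control theorem can be iterated (i.e.\ that $(KG_0, KG_0^{p^m})$ really is a Frobenius pair satisfying the derivation hypothesis with polynomial, hence UFD, associated gradeds at every stage) and that ``the same generators $\mathbf u^\alpha$'' can be used throughout so that $\gr I$ genuinely lands in the ideal generated by $p^m$-th powers. The reduction via Chevalley lattices to the exact hypotheses of Theorem \ref{VerStar}, handling the $A_n$ index shift correctly, is the other point requiring care.
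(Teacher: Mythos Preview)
Your approach is essentially the paper's: reduce to a uniform subgroup via the crossed-product Proposition \ref{CrossProd}, pick a Chevalley lattice so that Theorem \ref{VerStar} applies, iterate the control Theorem \ref{Main} along the chain $(KN^{p^k},KN^{p^{k+1}})$, and conclude $I=0$. Two small points of divergence are worth noting.

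First, your order of reductions is inverted. You take the reflexive closure of $I$ inside $KG$ \emph{before} passing to a uniform subgroup, invoking the maximal-order Remark after Theorem \ref{Main}. But that Remark needs $\gr A$ to be a UFD, and for arbitrary compact $p$-adic analytic $G$ (which may have torsion) $KG$ need not even be a domain, so neither Lemma \ref{CanDimFun} nor the reflexive-closure manoeuvre is available at that stage. The paper fixes this by doing the crossed-product reduction \emph{first}, and only then---inside the Auslander-regular domain $KN$---replaces $I$ by the preimage of the maximal pseudo-null submodule of $KN/I$ to force purity, whence reflexivity via Lemma \ref{CanDimFun}. Your final paragraph already contains the correct crossed-product step, so simply moving it to the front repairs the argument.

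Second, your endgame claims $\gr I = (\gr I_m)\cdot \gr KG_0$ and then lets $m\to\infty$ in the graded ring. The displayed equality is not justified (passing to associated gradeds does not obviously commute with extending an ideal along $KG_0^{p^m}\hookrightarrow KG_0$). The paper avoids this by arguing directly in $KN$: since $I\cap KN^{p^k}$ is proper it lies in the maximal ideal $(N^{p^k}-1)\cdot KN^{p^k}$, so $I\subseteq (N^{p^k}-1)\cdot KN$ for every $k$, and these ideals are cofinal with the $J$-adic filtration (cf.\ the proof of Proposition \ref{IwaFrobPair}), giving $\bigcap_k (N^{p^k}-1)\cdot KN = 0$. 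Your graded version can be salvaged along the same lines---$I_m$ proper forces $I\subseteq F_{-p^m}KG_0$ since $G_0^{p^m}-1\subseteq J^{p^m}$---but the paper's formulation is cleaner.
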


\begin{proof} Note that $\FFp G$ is a crossed product of the
Auslander-Gorenstein ring $\FFp N$ with the finite group $G/N$,
for any open normal uniform subgroup $N$ of $G$. By
Proposition \ref{CrossProd}, we may replace $G$ by
any uniform pro-$p$ group $N$ having the same
$\Qp$-Lie algebra without affecting the conclusion of the Theorem.

By considering a suitable Chevalley basis, we can find a
sub $\Zp$-Lie algebra $L\subset\mathcal{L}(G)$ such that 
$L\cong p^t \Phi(\Zp)$ where $\Phi$ is the root system 
associated to $\Qp\otimes \mathcal{L}(G)$.
Now take $N$ to be the corresponding uniform pro-$p$
group $\exp(L)$. The $\Zp$-Lie algebra of $N^{p^k}$
is $p^kL$, so the Frobenius pair $(\FFp N^{p^k},\FFp N^{p^{k+1}})$
satisfies the derivation hypothesis for all $k\geq 0$ by Theorem \ref{VerStar}.

Suppose for a contradiction that $I$ is a two-sided ideal of $\FFp N$ such that
\[\Cdim_{\FFp N} (\FFp N / I) = d - 1.\]
By replacing $I$ by the inverse image of the largest pseudo-null
submodule of $\FFp N / I$ in $\FFp N$ we may assume that $\FFp N / I$
is pure. Note that $I$ is proper and \emph{nonzero}, since otherwise
$d= \Cdim_{\FFp N}(\FFp N) = d - 1$. It follows from Lemma \ref{CanDimFun}
that $I$ is a reflexive ideal of $\FFp N$. Applying Theorem \ref{Main}
repeatedly, we see that $I$ is controlled by $\FFp N^{p^k}$ for each $k$:
\[I = (I \cap \FFp N^{p^k})\cdot \FFp N.\]
Since $I$ is a proper ideal of $\FFp N$, we see that $I\cap \FFp
N^{p^k}$ must be contained in the maximal ideal $(N^{p^k} - 1)\cdot
\FFp N^{p^k}$ of $\FFp N^{p^k}$ for all $k\geq 0$. Hence
\[I \subseteq \bigcap_{k=0}^\infty ((N^{p^k} - 1) \cdot \FFp N) = 0,\]
a contradiction.
\end{proof}

\section{The case when $p=2$}

\subsection{Congruence subgroups of $\SL_2(\Zp), p = 2$}\label{p=2}
The reader might have wondered why we didn't just assume that $L_1 =
pL$ from \S\ref{SubAlgGrp} onwards. The reason is that the extra
generality allows us to be more flexible when choosing the
particular open subgroup of $G$ that we should try to "descend"
towards. The case of open subgroups in $\SL_2(\Zp)$ when $p=2$
should illustrate this flexibility: if $G = \Gamma_l(\SL_2(\Zp))$
and $p=2$, then $(KG,KG^p)$ does not satisfy the 
derivation hypothesis, but we can
circumvent this problem by going down to $G^p$ from $G$ in two
steps.

So assume that $p=2$ and fix $l \geq 2$. We choose the same
basis $\{e,f,h\}$ for $L_0 := \mathfrak{sl}_2(p^l\Zp)$ as
in \S\ref{CongSubSL2}, so that the following relations are satisfied:
\begin{itemize}
\item $[h,e] = p^{l+1} e$,
\item $[h,f] = -p^{l+1} f$,
\item $[e,f] = p^l h$.
\end{itemize}
Let $L_1 = pe\Zp \oplus pf\Zp \oplus h\Zp$ and let $L_2 = pL$.
The relations in $L_1$
\begin{itemize}
\item $[h,pe] = p^{l+1} (pe)$,
\item $[h,pf] = -p^{l+1} (pf)$,
\item $[pe,pf] = p^{l+2} h$
\end{itemize}
show that $L_1$ is a powerful $\Zp$-subalgebra of $L_0$ which
contains $L_2$. Moreover, $pL_1 \subseteq L_2$, so the pairs
$(L_0,L_1)$ and $(L_1,L_2)$ both satisfy the assumptions
made in \S\ref{SubAlgGrp}, and hence $(KG_0,KG_1)$ and
$(KG_1,KG_2)$ are Frobenius pairs, by Proposition \ref{IwaFrobPair}.
However the parameter $t$ equals $2$ in the first
case and $1$ in the second case.

\begin{prop} Let $G_i = \exp(L_i)$ for each $i=0,1,2$.
Then the Frobenius pairs $(KG_0,KG_1)$ and $(KG_1,KG_2)$
both satisfy the derivation hypothesis.
\end{prop}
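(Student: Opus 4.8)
The plan is to verify the derivation hypothesis separately for the two pairs, following the template of the $p\geq 3$ computation in \S\ref{CongSubSL2}: produce enough sources of derivations from Corollary \ref{SourcesIwa}, write down the induced derivations of $B$ via Theorem \ref{IwaDers} (these are $B_1$-linear for $r\gg 0$ by the remark in \S\ref{Source}, hence lie in $\mathcal{D}$ and are pinned down by their values on $y_1,\ldots,y_t$), and then extract $\mathcal{D}(Y)\subseteq XB$ from coprimality of monomial coefficients in the polynomial ring $B$. Since $-1=1$ in characteristic $2$, signs are irrelevant throughout.

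\textbf{The pair $(KG_0,KG_1)$.} Here $L_0/L_1$ is spanned by the images of $e,f$, so $t=2$; take $v_1=e$, $v_2=f$, $v_3=h$, so that $B=K[e,f,h]$, $B_1=K[e^2,f^2,h]$ and $\mathcal{D}=B\,\partial_e\oplus B\,\partial_f$ by Proposition \ref{Derivations}(b). Since $l\geq 2=\epsilon$, inspecting the commutator relations of \S\ref{p=2} shows that $e,f,h$ each satisfy the hypotheses of Corollary \ref{SourcesIwa} for $(L_0,L_1)$ --- with $k=l$ for $e$ and $f$ (the obstruction being $[e,f]=p^lh$) and $k=l+1$ for $h$ --- so $\exp(e)$, $\exp(f)$, $\exp(h)$ all give sources. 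Computing the matrices of the maps $\rho_u$ and applying Theorem \ref{IwaDers} yields, for all $r\gg 0$,
\[
D_{p^re}=h^{2^{r+l}}\partial_f,\qquad D_{p^rf}=h^{2^{r+l}}\partial_e,\qquad D_{p^rh}=e^{2^{r+l+1}}\partial_e+f^{2^{r+l+1}}\partial_f;
\]
note that $h$ now lies in $B_1$, so $\mathcal{D}$ has no $\partial_h$-direction and these three derivations are enough. Given a homogeneous $Y$ lying in the $\mathbf{a}$-closure of $XB$ for every source $\mathbf{a}$, feeding in the three families above for $r\geq s$ with $s$ large gives $h^N\partial_e(Y)\in XB$ and $h^N\partial_f(Y)\in XB$ with $N=2^{s+l}$, while eliminating between the $D_{p^rh}$-relations at $r=s$ and $r=s+1$ gives $e^M(e^M+f^M)\partial_e(Y)\in XB$ and $f^M(e^M+f^M)\partial_f(Y)\in XB$ with $M=2^{s+l+1}$. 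As $h$ is a prime of $K[e,f,h]$ dividing none of $e$, $f$, $e^M+f^M$, the two coefficients of $\partial_e(Y)$ (resp.\ of $\partial_f(Y)$) are coprime, so $X$ divides their gcd; hence $\partial_e(Y),\partial_f(Y)\in XB$ and $\mathcal{D}(Y)\subseteq XB$.

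\textbf{The pair $(KG_1,KG_2)$.} Because $L_2=pL_0$ contains $pe$ and $pf$, the quotient $L_1/L_2$ is spanned by the image of $h$ alone, so $t=1$; take $v_1=h$, $v_2=pe$, $v_3=pf$, so that $B=K[y_1,y_2,y_3]$, $B_1=K[y_1^2,y_2,y_3]$ and $\mathcal{D}=B\,\partial_1$. One subtlety: $h$ does \emph{not} give a source of derivations for $(KG_1,KG_2)$, since $[h,L_1]$ and $[h,L_2]$ sit at the same $p$-level inside $L_1$ and the third condition of Corollary \ref{SourcesIwa} fails; but $pe$ and $pf$ do satisfy it (with $k=l+1$), and Theorem \ref{IwaDers} gives, for $r\gg 0$,
\[
D_{p^r(pe)}=y_2^{2^{r+l+1}}\partial_1,\qquad D_{p^r(pf)}=y_3^{2^{r+l+1}}\partial_1.
\]
If $Y$ is homogeneous and lies in the $\mathbf{a}$-closure of $XB$ for all sources, then $y_2^N\partial_1(Y)\in XB$ and $y_3^N\partial_1(Y)\in XB$ with $N=2^{s+l+1}$; as $y_2,y_3$ are distinct primes of $K[y_1,y_2,y_3]$ these coefficients are coprime, so $\partial_1(Y)\in XB$ and again $\mathcal{D}(Y)\subseteq XB$. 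This establishes the derivation hypothesis in both cases.

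I expect the only real difficulty to be bookkeeping: determining the exact $p$-level of each relevant commutator inside the non-standard algebra $L_1$ (this decides which elements give sources of derivations, and is responsible for the asymmetry between $h$ and $pe,pf$ in the second pair), and then carrying the towers of $p$-th powers correctly through Theorem \ref{IwaDers}. The load-bearing idea --- coprimality of monomial coefficients in a polynomial ring forcing $\partial_j(Y)\in XB$ --- is exactly the one used for $p\geq 3$; the point of descending to $KG_0^p$ in two steps rather than one is precisely that moving $h$ into $B_1$ at each stage keeps the induced derivations nondegenerate, whereas the one-step pair $(KG_0,KG_0^p)$ at $p=2$ produces only the family $h^{2^{r+l}}\partial_f$ and its analogues, none of which touch the $\partial_h$-direction still present in $\mathcal{D}$.
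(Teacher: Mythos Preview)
Your proposal is correct and follows essentially the same approach as the paper's own proof: the same sources of derivations are used for each pair, the induced derivations agree with the paper's formulas (up to the sign identifications forced by $p=2$), and the key step in each case is the same coprimality argument in $K[e,f,h]$ after eliminating between the $D_{p^rh}$-relations at consecutive $r$. Your added remarks --- that $\mathcal{D}$ has no $\partial_h$-direction in the first pair, that $h$ fails to yield a useful source for the second pair, and why the one-step descent $(KG_0,KG_0^p)$ cannot work --- are correct elaborations that the paper leaves implicit.
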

\begin{proof} We first deal with the case $(KG_0,KG_1)$;
in this case, $B = K[e,f,h]$ and $B_1 = K[e^p,f^p,h]$. We observe that
\[\begin{array}{lll}
[e,L_0] \subseteq p^lL_0, \quad &[e,L_0] \nsubseteq p^{l+1}L_0,
\quad &[e,L_1] \subseteq p^{l+1}L_0, \\

[f,L_0] \subseteq p^lL_0, \quad &[f,L_0] \nsubseteq p^{l+1}L_0,
\quad &[f,L_1] \subseteq p^{l+1}L_0, \\

[h,L_0] \subseteq p^{l+1}L_0, \quad &[h,L_0] \nsubseteq p^{l+2}L_0,
\quad &[h,L_1] \subseteq p^{l+2}L_0.
\end{array}\]
By Theorem \ref{IwaDers}, we obtain three sets of derivations
of $B = \gr KG_0$ arising from sources of derivations of $(KG_0, KG_1)$:
\[\begin{array}{lll}
D_{p^re} &=& h^{p^{l+r}} \frac{\partial}{\partial f}, \\
D_{p^rf} &=& h^{p^{l+r}} \frac{\partial}{\partial e}, \\
D_{p^rh} &=& e^{p^{r+l+1}} \frac{\partial}{\partial e} -
f^{p^{r+l+1}} \frac{\partial}{\partial f}.
\end{array}
\]
Let $X,Y$ be homogeneous elements of $B$ and suppose that
$Y$ lies in the $\mathbf{a}$-closure of $XB$ for all
$\mathbf{a}\in\mathcal{S}(KG_0,KG_1)$; we can thus find
an integer $s$ such that
\[D_{p^ru}(Y) \in XB\]
for all $u \in \{e,f,h\}$ and all $r \geq s$. Eliminating
the terms involving $\partial Y / \partial f$ from the $D_{p^rh}$
equations for $r=s$ and $r=s+1$ shows that
\[f^{p^{s+l+1}}\left(f^{p^{s+l+1}(p-1)} + e^{p^{s+l+1}(p-1)}\right)
\partial Y / \partial f \in XB\]
Since $D_{p^se}(Y) = h^{p^{s+l}} \partial Y / \partial f \in XB$
and the coefficients of $\partial Y/\partial f$ are coprime,
\[\partial Y / \partial f \in XB.\]
Similarly $\partial Y/\partial e\in XB$, so the derivation hypothesis holds by
Proposition \ref{Derivations}(b).

Now consider the case $(KG_1, KG_2)$. Recycling notation,
let $\{e,f,h\}$ be the basis for $L_1$ considered above, so
that $\{e,f,ph\}$ is a basis for $L_2$, and the relations
\begin{itemize}
\item $[h,e] = p^{l+1} e$,
\item $[h,f] = -p^{l+1} f$,
\item $[e,f] = p^{l+2} h$
\end{itemize}
hold in $L_1$. The corresponding graded rings are $B = K[e,f,h]$
and $B_1 = K[e,f,h^p]$. Since
\[\begin{array}{lll}
[e,L_1] \subseteq p^{l+1}L_1, \quad &[e,L_1] \nsubseteq p^{l+2}L_1,
\quad &[e,L_2] \subseteq p^{l+2}L_1, \\

[f,L_1] \subseteq p^{l+1}L_1, \quad &[f,L_1] \nsubseteq p^{l+2}L_1,
\quad &[f,L_2] \subseteq p^{l+2}L_1,
\end{array}\]
Theorem \ref{IwaDers} gives us two sets of derivations of $B$
arising from sources of derivations of $(KG_1,KG_2)$:
\[\begin{array}{lll}
D_{p^re} &=& e^{p^{l+r+1}} \frac{\partial}{\partial h}, \\
D_{p^rf} &=& f^{p^{l+r+1}} \frac{\partial}{\partial h}. \\
\end{array}
\]
Let $X,Y$ be homogeneous elements of $B$ and suppose that $Y$ lies
in the $\mathbf{a}$-closure of $XB$ for all $\mathbf{a}\in
\mathcal{S}(KG_1,KG_2)$; we can thus find an integer $s$ such that
\[D_{p^ru}(Y) \in XB\]
for all $u \in \{e,f\}$ and all $r \geq s$. In particular,
$D_{p^re}(Y) = e^{p^{r+l+1}}\partial Y/\partial h$ and
$D_{p^rf}(Y) = f^{p^{r+l+1}}\partial Y/\partial h$ both
lie in $XB$. Since the coefficients of $\partial Y/\partial h$
are coprime, $\partial Y/\partial h \in XB$, so the 
derivation hypothesis holds.
\end{proof}

\begin{cor}
Let $K$ be a field of characteristic $2$ and suppose that $G$ is an
open subgroup of $\SL_2(\mathbb{Z}_2)$. Then $KG$ has no two-sided
ideals $I$ such that
\[\Cdim_{KG} (KG / I) = 2.\]
\end{cor}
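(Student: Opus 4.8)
The plan is to follow the proof of the theorem in \S\ref{ProofThmA} almost verbatim, the one new feature being that for $p=2$ we cannot descend from a congruence subgroup to its Frattini subgroup in a single application of Theorem \ref{Main} — because $(KG,KG^p)$ fails the derivation hypothesis — so instead we descend in two steps, using the preceding Proposition, which supplies the derivation hypothesis for both $(KG_0,KG_1)$ and $(KG_1,KG_2)$.

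First I would reduce to the uniform case. If $N$ is an open normal uniform subgroup of $G$, then $KG$ is a crossed product of $KN$ with the finite group $G/N$, so by Proposition \ref{CrossProd} it is enough to show that $KN$ has no two-sided ideal $I$ with $\Cdim_{KN}(KN/I) = 2$; moreover $N$ may be replaced by any uniform pro-$2$ group with the same $\Qp$-Lie algebra $\mathfrak{sl}_2(\Qp)$. I would therefore take $N = G_0 := \exp(L_0)$ with $L_0 = \mathfrak{sl}_2(p^l\Zp)$ for some fixed $l\geq 2$, and bring in the chain $G_0 \supseteq G_1 \supseteq G_2 = G_0^p$ and the Lie algebras $L_0 \supseteq L_1 \supseteq L_2 = pL_0$ of \S\ref{p=2}. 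The associated graded rings $\gr KG_i$ are polynomial rings over $K$, hence UFDs, and the Frobenius pairs $(KG_0,KG_1)$ and $(KG_1,KG_2)$ both satisfy the derivation hypothesis by the Proposition in \S\ref{p=2}. Note that $d := \dim G = 3$, so $2 = d-1$.

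Now suppose for a contradiction that $I$ is a two-sided ideal of $KG_0$ with $\Cdim_{KG_0}(KG_0/I) = d-1$. Exactly as in \S\ref{ProofThmA}, after replacing $I$ by the preimage of the largest pseudo-null submodule of $KG_0/I$ we may assume $KG_0/I$ is pure; then $I$ is proper and nonzero and is a reflexive right ideal by Lemma \ref{CanDimFun}. Since $\gr KG_0$ is a UFD, $KG_0$ is a maximal order (the Remark following Theorem \ref{Main}), so $I$ is a reflexive two-sided ideal. Applying Theorem \ref{Main} first to $(KG_0,KG_1)$ and then to $(KG_1,KG_2)$ yields
\[ I = (I\cap KG_1)\cdot KG_0, \qquad I\cap KG_1 = (I\cap KG_2)\cdot KG_1, \]
with $I\cap KG_1$ and $I\cap KG_2 = I\cap KG_0^p$ reflexive two-sided ideals of $KG_1$ and $KG_2$ respectively; combining these and using $KG_1\cdot KG_0 = KG_0$ gives $I = (I\cap KG_0^p)\cdot KG_0$.

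Finally I would iterate. Since $G_0^p \cong \Gamma_{l+1}(\SL_2(\Zp))$ and $l+1\geq 2$, the same two-step construction of \S\ref{p=2} applies with $l$ replaced by $l+1$ (and then $l+2,\ldots$), so by induction
\[ I = (I\cap KG_0^{p^k})\cdot KG_0 \]
for every $k\geq 0$, where $I\cap KG_0^{p^k}$ is a proper ideal of the local ring $KG_0^{p^k}$ and hence is contained in its augmentation ideal $(G_0^{p^k}-1)\cdot KG_0^{p^k}$. Therefore $I \subseteq \bigcap_{k\geq 0}(G_0^{p^k}-1)\cdot KG_0 = 0$, since $G_0^{p^k}-1$ lies in the $p^k$-th power of the augmentation ideal of $KG_0$ and this filtration is separated. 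This contradicts $I\neq 0$, so no such $I$ exists, and the Corollary follows by the crossed-product reduction. I expect the only real subtlety to be the bookkeeping in the two-step descent — tracking which Frobenius pair Theorem \ref{Main} is applied to and checking that the intermediate subgroup and the standing hypotheses of \S\ref{p=2} survive each round of the induction — rather than anything genuinely new, the derivation-hypothesis obstruction having already been disposed of by the Proposition above.
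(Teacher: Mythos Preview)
Your proposal is correct and follows exactly the approach the paper intends: the paper's own proof is the single line ``Follow the proof of Theorem A'', and you have carried this out faithfully, with the necessary two-step descent $(KG_0,KG_1)$ then $(KG_1,KG_2)$ replacing the single step $(KG,KG^p)$ used for odd primes. Your bookkeeping (UFD checks on the graded rings, reflexivity via Lemma \ref{CanDimFun}, and the iteration through $G_0^{p^k}\cong\Gamma_{l+k}(\SL_2(\mathbb{Z}_2))$ with $l+k\geq 2$) is all in order.
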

\begin{proof} Follow the proof of Theorem A.
\end{proof}

\subsection{Proof of Theorem C}
Let $I$ be a prime ideal of $KG$. The dimension of $G$
is three, so the possible values for $c = \Cdim_{KG}(KG/I)$
when $I$ is a two-sided ideal of $KG$ are $0,1,2$ or $3$.
By Theorem \ref{ProofThmA} and Corollary \ref{p=2}, $c$
cannot be equal to $2$ and by \cite[Theorem A]{A}, $c$ cannot
be equal to $1$. Hence $c=0$, in which case $I$ is the maximal
ideal of $KG$ since $KG$ is local, or $c=3$ in
which case $I = 0$. \qed

\section*{Acknowledgments}
The authors thank Quanshui Wu for useful conversations. The first
author would like to thank Simon Wadsley and Sukhendu Mehrotra for
helpful comments, and the Department of Pure Mathematics at the 
University of Sheffield for financial support. Wei is supported by a research fellowship from the
China Scholarship Council and by the Department of Mathematics at
the University of Washington. Zhang is supported by the US National
Science Foundation and the Royalty Research Fund of the University
of Washington.

\end{document}